\newtheorem{theorem}{Theorem}[section]
\newtheorem{lemma}[theorem]{Lemma}
\newtheorem{corollary}[theorem]{Corollary}
\newtheorem{proposition}[theorem]{Proposition}
\newtheorem{conjecture}[theorem]{Conjecture}
\theoremstyle{definition}
\newtheorem{definition}[theorem]{Definition}
\newtheorem{example}[theorem]{Example}
\theoremstyle{remark}
\newtheorem{remark}[theorem]{Remark}
\newtheorem*{proof-thm-prod-coefficients}{Proof of Theorem \ref{prod-coefficients}}
\numberwithin{equation}{section}
\newcommand{\Z}{\mathbb{Z}}
\newcommand{\N}{\mathbb{N}}
\newcommand{\R}{\mathbb{R}}
\newcommand{\C}{\mathbb{C}}
\newcommand{\K}{\mathfrak{K}}
\newcommand{\XA}{\ell^2(Z_X)\otimes H \otimes H_A}
\newcommand{\YA}{\ell^2(Z_Y)\otimes H \otimes H_A}
\newcommand{\XH}{\ell^2(Z_X)\otimes H}
\newcommand{\YH}{\ell^2(Z_Y)\otimes H}
\newcommand{\U}{\mathcal{U}}
\DeclareMathOperator{\supp}{supp}
\DeclareMathOperator{\prop}{prop}
\title{The coarse Baum-Connes conjecture with filtered coefficients and product metric spaces}
\begin{document}

\author{Jianguo Zhang}
\address{School of Mathematics and Statistics, Shaanxi Normal University}
\email{jgzhang@snnu.edu.cn}

\thanks{This work was supported by NSFC (Nos. 12171156, 12271165, 12301154) and the Fundamental Research Funds for the Central Universities (No. 1301032574).}
\date{\today}

\begin{abstract}
Inspired by the quantitative $K$-theory, in this paper, we introduce the coarse Baum-Connes conjecture with filtered coefficients which generalizes the original conjecture. There are two advantages for the conjecture with filtered coefficients. Firstly, the routes toward the coarse Baum-Connes conjecture also work for the conjecture with filtered coefficients. Secondly, the class of metric spaces that satisfy the conjecture with filtered coefficients is closed under products and yet it is unknown for the original conjecture. As an application, we discover some new examples of product metric spaces for the coarse Baum-Connes conjecture.
\end{abstract}
\pagestyle{plain}
\maketitle


\section{Introduction}
The coarse Baum-Connes conjecture (cf. \cite{HigsonRoe-CBC}\cite{Yu-CBC}) can be dated back to the celebrated Atiyah-Singer index theorem which gives a formula by using the topological index to compute the Fredholm index of an elliptic differential operator on a closed manifold. For an elliptic differential operator on an open Riemannian manifold, Roe defined a higher index which lives in the $K$-theory of a $C^{\ast}$-algebra, now called the Roe algebra (cf. \cite{Roe1988}). Let us recall its definition for a locally finite metric space $X$. Let $\K(H)$ be the algebra of compact operators on a Hilbert space $H$. The \textit{Roe algebra} of $X$, denoted by $C^{\ast}(X)$, is defined to be the norm closure of the $\ast$-algebra consisting of all $(X\times X)$-matrices $T$ with coefficients in $\K(H)$ satisfying $T_{x,y}=0$ when $d(x, y)>R$ for some $R>0$.

The coarse Baum-Connes conjecture for $X$ asserts that the assembly map from the coarse $K$-homology of $X$ to the $K$-theory of the Roe algebra of $X$ is an isomorphism (see Conjecture \ref{CBC}). The left-hand side of the conjecture is topological and computable. Thus, the coarse Baum-Connes conjecture provides a topological algorithm for higher indexes of elliptic differential operators on Riemannian manifolds. Moreover, the conjecture has important applications to the Novikov conjecture on the homotopy invariance of higher signatures for closed manifolds, the Gromov-Lawson-Rosenberg conjecture concerning the existence of Riemannian metrics with positive scalar curvature on a manifold and Gromov's zero-in-the-spectrum conjecture about the spectrum of the Laplacian operator on a Riemannian manifold (cf. \cite{HigsonRoe-Book}\cite{Roe1993}\cite{WillettYu-Book}). 

There are three routes toward the coarse Baum-Connes conjecture. Roughly speaking, the first one is algebraic. In \cite{Yu1998}, Yu introduced the quantitative $K$-theory to verify that the conjecture holds for metric spaces with finite asymptotic dimension. The second one is geometric. In \cite{Yu2000}, Yu applied the Dirac-dual-Dirac method to prove that the conjecture holds for metric spaces which admit a coarse embedding into Hilbert space. The third one is topological. In \cite{HigsonRoe-CBC}, Higson and Roe employed the coarse homotopy and open cones to show that the conjecture is true for non-positively curved metric spaces. On the other hand, there exist some counterexamples to the coarse Baum-Connes conjecture, such as large spheres and certain expanders or asymptotic expanders (cf. \cite{HLS-2002}\cite{Khukhro-Li-Vigolo-Zhang}\cite{Willett-Yu-expander}\cite{Yu1998}).

In \cite{OyonoYu2015}, Oyono-Oyono and Yu generalized the quantitative $K$-theory to all filtered $C^{\ast}$-algebras as defined below.
\begin{definition}(\cite[Definition 1.1]{OyonoYu2015})\label{Def-filtration}
	Let $A$ be a $C^{\ast}$-algebra. A \textit{filtration} of $A$ is a family of closed linear subspaces $(A_r)_{r>0}$ such that
	\begin{itemize}
		\item $A_r\subseteq A_{r'}$, if $r\leq r'$;
		\item $A_r$ is closed by involution;
		\item $A_rA_{r'}\subseteq A_{r+r'}$;
		\item $\cup_{r>0}A_r$ is dense in $A$. 
	\end{itemize}
	If $A$ is unital, we also require the unit of $A$ belongs to $A_r$ for each $r>0$. A $C^{\ast}$-algebra equipped with a filtration is called a \textit{filtered $C^{\ast}$-algebra}.
\end{definition}
As a refinement of the $K$-theory, the quantitative $K$-theory with order $r$ of $A$ is defined by homotopy equivalent classes of almost-projections (or almost-unitaries) in $n\times n$-matrices with coefficients in $A_r$ for all $n\in \N$. 

Inspired by the quantitative $K$-theory, in this paper, we introduce the notion of the Roe algebra with filtered coefficients as below (see Definition \ref{Def-Roe-filcoeff} for the general case). 
\begin{definition}\label{Def-Roe-fil-intro}
	Let $X$ be a locally finite metric space and $A$ be a filtered $C^{\ast}$-algebra. The \textit{Roe algebra of $X$ with filtered coefficients in $A$}, denoted by $C^{\ast}_f(X, A)$, is defined to be the norm closure of the $\ast$-algebra consisting of all $(X\times X)$-matrices $T$ with coefficients in $\K(H)\otimes A_r$ satisfying $T_{x,y}=0$ when $d(x, y)>R$ for some $r, R>0$.
\end{definition} 
Then we introduce the coarse Baum-Connes conjecture with filtered coefficients to compute the $K$-theory of Roe algebras with filtered coefficients by using the $K$-theory of localization algebras with filtered coefficients (see Conjecture \ref{CBC}). When $A=\C$, the coarse Baum-Connes conjecture with filtered coefficients in $\C$ is exactly the original conjecture. Moreover, the above-mentioned three ways to the original conjecture also work for the coarse Baum-Connes conjecture with filtered coefficients (see Section \ref{Section-ex}).

In order to connect the coarse Baum-Connes conjecture with filtered coefficients for product metric spaces and the conjecture with filtered coefficients for their factors, we introduce a notion of localization algebras along one direction with filtered coefficients as a bridge (see Definition \ref{Def-localg-along-X}). Then by a Mayer-Vietoris argument, we obtain the following main result (see also Theorem \ref{main-corollary}).

\begin{theorem}\label{main-Thm}
	Let $X$, $Y$ be two proper metric spaces with bounded geometry and $A$ be a filtered $C^{\ast}$-algebra. If the following two conditions are satisfied:
	\begin{enumerate}
		\item $X$ satisfies the coarse Baum-Connes conjecture with filtered coefficients in the Roe algebra $C^{\ast}_{f}(Y, A)$;  
		\item $Y$ satisfies the coarse Baum-Connes conjecture with filtered coefficients in the uniform product $\prod^{u}_{\N}(A\otimes \K(H))$.
	\end{enumerate}
	Then $X\times Y$ satisfies the coarse Baum-Connes conjecture with filtered coefficients in $A$. 
	
	In particular, if $X$ and $Y$ satisfy the coarse Baum-Connes conjecture with filtered coefficients, then $X\times Y$ satisfies the coarse Baum-Connes conjecture with filtered coefficients.
\end{theorem}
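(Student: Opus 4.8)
The plan is to factor the assembly map for $X\times Y$ with filtered coefficients in $A$ through a localization algebra in which only the $X$-direction is localized, and to recognise the two resulting pieces as assembly maps of the two factors. Write $C^*_{L,f}(Z,B)$ for the localization algebra of $Z$ with filtered coefficients in a filtered $C^*$-algebra $B$, so that the conjecture for $Z$ with coefficients in $B$ is the assertion that the evaluation homomorphism $C^*_{L,f}(Z,B)\to C^*_f(Z,B)$ induces an isomorphism on $K$-theory (Conjecture~\ref{CBC}). Two Fubini-type identifications are needed. First, after fixing an identification $H\otimes H\cong H$ of auxiliary Hilbert spaces, one has a $*$-isomorphism $C^*_f(X\times Y,A)\cong C^*_f\bigl(X,C^*_f(Y,A)\bigr)$, where $C^*_f(Y,A)$ is filtered by $s\mapsto\{T:\prop(T)\le s\text{ in }d_Y,\ T_{y,y'}\in\K(H)\otimes A_s\}$; this is just a regrouping of matrix indices, an $(X\times Y)^2$-matrix of finite propagation in the product metric with entries in $\K(H)\otimes A_r$ being the same as an $X^2$-matrix of finite $X$-propagation whose entries lie in a fixed filtration level of $C^*_f(Y,A)$. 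Second, and in the same way, the localization algebra along the $X$-direction $C^*_{L,X}(X\times Y,A)$ of Definition~\ref{Def-localg-along-X} --- paths $t\mapsto T(t)$ whose $X$-propagation tends to $0$ as $t\to\infty$ while the $Y$-propagation stays bounded --- is identified with $C^*_{L,f}\bigl(X,C^*_f(Y,A)\bigr)$.

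Under these identifications the evaluation homomorphism on $C^*_{L,X}(X\times Y,A)$ is precisely the assembly map for $X$ with filtered coefficients in $C^*_f(Y,A)$. Since an operator whose propagation in the product metric tends to $0$ in particular has bounded $Y$-propagation and $X$-propagation tending to $0$, there is a natural inclusion $\iota\colon C^*_{L,f}(X\times Y,A)\hookrightarrow C^*_{L,X}(X\times Y,A)$, and at the level of $K$-theory the assembly map for $X\times Y$ with coefficients in $A$ factors as the composite of $\iota_*$ with the assembly map for $X$ with coefficients in $C^*_f(Y,A)$. Hypothesis (1) makes the second map an isomorphism, so the whole problem reduces to proving that $\iota_*$ is an isomorphism.

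This is the heart of the argument, and where hypothesis (2) and the Mayer--Vietoris argument enter. The source and target of $\iota$ are functors of the $X$-variable (with $Y$ and $A$ fixed) satisfying coarse excision and coarse homotopy invariance, and $\iota$ is a natural transformation between them; by the standard cutting-and-pasting machinery for such functors (compare \cite{Yu1998,HigsonRoe-CBC}), carried out with all propagation and path-continuity estimates kept uniform, the bijectivity of $\iota_*$ reduces to the case in which $X$ is a disjoint union $\bigsqcup_n B_n$ of uniformly bounded sets with pairwise distances tending to infinity. In that case $X\times Y$ is the corresponding uniform disjoint union of the spaces $B_n\times Y$, each coarsely equivalent to $Y$, so $C^*_{L,f}(X\times Y,A)$ and $C^*_{L,X}(X\times Y,A)$ become uniform products over $\N$ of copies of the localization, respectively Roe, algebra of $Y$ with coefficients stabilised by the uniformly finite-dimensional blocks $\K(\ell^2 B_n)$; collecting these blocks into a single stable coefficient algebra, $\iota_*$ is identified with a retract of the assembly map for $Y$ with filtered coefficients in $\prod^u_{\N}(A\otimes\K(H))$, which is an isomorphism by hypothesis (2). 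Hence $\iota_*$ is an isomorphism, and the first statement of the theorem follows. I expect the delicate point to be exactly this reduction --- producing the coefficient algebra $\prod^u_{\N}(A\otimes\K(H))$ on the nose and running the Mayer--Vietoris bootstrap over $X$ with no asymptotic-dimension hypothesis on $X$ --- which is what forces the \emph{uniform} product rather than a direct sum or an unrestricted product.

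For the final assertion, note that the coefficient algebras appearing in (1) and (2) are themselves filtered $C^*$-algebras: $C^*_f(Y,A)$ with the filtration above (propagations in $Y$ add and $A_rA_{r'}\subseteq A_{r+r'}$), and $\prod^u_{\N}(A\otimes\K(H))$ with the filtration $r\mapsto\prod_{\N}(A_r\otimes\K(H))$. Consequently, if $X$ and $Y$ satisfy the coarse Baum--Connes conjecture with filtered coefficients --- i.e. with filtered coefficients in every filtered $C^*$-algebra --- then conditions (1) and (2) are satisfied for every filtered $A$, and the first part gives the conjecture for $X\times Y$ with filtered coefficients in every such $A$, which is exactly the conjecture with filtered coefficients.
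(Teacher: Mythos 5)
Your overall architecture is the same as the paper's: factor the assembly map for $X\times Y$ through the localization algebra along the $X$-direction, use hypothesis (1) to handle the evaluation $C^*_{L,X,f}(X\times Y,A)\to C^*_f(X\times Y,A)$ by comparison with the assembly map of $X$ with coefficients in $C^*_f(Y,A)$, and use a Mayer--Vietoris induction over the skeleta of the Rips complexes of $X$ (bounded geometry giving finite dimensionality) to reduce the inclusion $C^*_{L,f}\hookrightarrow C^*_{L,X,f}$ to the zero-dimensional case, where hypothesis (2), in the guise of the consistent conjecture for $Y$ with coefficients in $\prod^u_{\N}(A\otimes\K(H))=C^*_f(\square,A)$, finishes the argument. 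Your reading of the final assertion is also the intended one.

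There is, however, one genuine error: you assert a Fubini-type $*$-isomorphism $C^*_f(X\times Y,A)\cong C^*_f\bigl(X,C^*_f(Y,A)\bigr)$ ``by regrouping matrix indices.'' This is false in general, and the paper explicitly records a counterexample: when $X$ and $Y$ are both separated coarse unions of points and $A=\C$, one gets $\ell^\infty(X,\ell^\infty(Y,\K(H)))$ versus $\ell^\infty(X,\K(H)\otimes\ell^\infty(Y,\K(H)))$, which are not isomorphic. The obstruction is the local-compactness condition: an entry $T_{(x,\cdot),(x',\cdot)}$ of an element of $C^*_f(X\times Y,A)$ is only locally compact over $Y$, whereas membership in $C^*_f(X,C^*_f(Y,A))$ forces columns over points of $X$ to lie in $\K(\ell^2(Z_X)\otimes H)\otimes C^*_f(Y,A)_s$, a strictly stronger requirement. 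The repair is exactly the retract device you already invoke in the second half of your argument: the paper constructs $\ast$-homomorphisms $\psi:C^*_f(X\times Y,A)\to C^*_f(X,C^*_f(Y,A))$ (tensoring entries by a rank-one projection) and $\phi$ in the other direction, with $\phi\circ\psi$ equal to conjugation by an isometric multiplier, hence the identity on $K$-theory by Lemma \ref{isometryequivalent}; the same construction at the localization level exhibits the evaluation map on $C^*_{L,X,f}(X\times Y,A)$ as a $K$-theoretic retract of the assembly map for $X$ with coefficients in $C^*_f(Y,A)$, which is all that is needed to deduce bijectivity from hypothesis (1). With that substitution your proof matches the paper's.
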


Theorem \ref{main-Thm} is a coarse analogue of Oyono-Oyono's result in \cite[Corollary 7.12]{Oyono-BC-extensions} stating that the Baum-Connes conjecture with coefficients is closed under the direct product of groups. 

In \cite{Fukaya-Oguni-2015}, Fukaya and Oguni proved that the coarse Baum-Connes conjecture holds for product metric spaces of certain geometric groups by studying the coronae of products. In generally, if $X$ and $Y$ satisfy the coarse Baum-Connes conjecture, it is unknown whether the conjecture is true or not for $X\times Y$. Taking $A=\C$ in Theorem \ref{main-Thm}, we have the following corollary (see also Corollary \ref{corollay-CBC}), which not only recovers Fukaya and Oguni's result but also provides some new examples of products satisfying the conjecture (see Section \ref{Section-ex}). 

\begin{corollary}\label{Cor-intro}
	Let $X$ and $Y$ be two proper metric spaces with bounded geometry. If the followings are satisfied: 
	\begin{enumerate}
		\item $X$ satisfies the coarse Baum-Connes conjecture with filtered coefficients in the Roe algebra $C^{\ast}(Y)$;
		\item $Y$ satisfies the coarse Baum-Connes conjecture with coefficients in the direct product $\prod_{\N} \K(H)$.
	\end{enumerate}
	Then $X\times Y$ satisfies the coarse Baum-Connes conjecture. 
	
	In particular, if $X$ and $Y$ satisfy the coarse Baum-Connes conjecture with filtered coefficients, then $X\times Y$ satisfies the coarse Baum-Connes conjecture.
\end{corollary}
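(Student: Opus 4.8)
The strategy is to obtain the corollary by applying Theorem~\ref{main-Thm} with the trivially filtered coefficient algebra $A=\C$ (that is, $A_r=\C$ for every $r>0$); the work consists in matching the two hypotheses of that theorem with the two hypotheses stated here and in recognizing that its conclusion is the original conjecture.

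First I would specialize Definition~\ref{Def-Roe-fil-intro} (in the form of Definition~\ref{Def-Roe-filcoeff}) to $A=\C$. Since $\K(H)\otimes A_r=\K(H)\otimes\C\cong\K(H)$ and the constraint ``for some $r>0$'' becomes vacuous, the Roe algebra with filtered coefficients $C^{\ast}_f(Y,\C)$ is, verbatim, the Roe algebra $C^{\ast}(Y)$ carrying its propagation filtration. Hence hypothesis~(1) of Theorem~\ref{main-Thm}, for $A=\C$, \emph{is} hypothesis~(1) of the corollary.

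Next I would analyze the second coefficient algebra. Taking $A=\C$ gives $A\otimes\K(H)\cong\K(H)$ with the trivial filtration $(\K(H))_r=\K(H)$, so the uniform product $\prod^{u}_{\N}(A\otimes\K(H))$ coincides with the full direct product $\prod_{\N}\K(H)$ of bounded sequences of compact operators, again trivially filtered. Now for any trivially filtered $C^{\ast}$-algebra $B$ the filtration degree plays no role: the filtered Roe algebra $C^{\ast}_f(-,B)$ and the accompanying localization algebra with filtered coefficients reduce on the nose to the Roe algebra and localization algebra with ordinary coefficients in $B$, so the assembly maps agree and the coarse Baum-Connes conjecture with filtered coefficients in $B$ is literally the coarse Baum-Connes conjecture with coefficients in $B$. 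Applying this with $B=\prod_{\N}\K(H)$ turns hypothesis~(2) of the corollary into hypothesis~(2) of Theorem~\ref{main-Thm} for $A=\C$.

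With both hypotheses in place, Theorem~\ref{main-Thm} gives that $X\times Y$ satisfies the coarse Baum-Connes conjecture with filtered coefficients in $\C$, which, as noted in the introduction, is exactly the original coarse Baum-Connes conjecture for $X\times Y$. The ``in particular'' assertion follows in the same way: if $X$ and $Y$ satisfy the conjecture with filtered coefficients in every filtered $C^{\ast}$-algebra, then in particular the two hypotheses hold --- for the filtered algebra $C^{\ast}(Y)$ and the trivially filtered algebra $\prod_{\N}\K(H)$ respectively --- so the first part applies; alternatively one may simply combine the ``in particular'' clause of Theorem~\ref{main-Thm} with the identification of filtered coefficients in $\C$ with the original conjecture. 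The only step that is not pure bookkeeping is the coincidence, for trivially filtered $B$, of the conjecture with filtered coefficients in $B$ and the conjecture with ordinary coefficients in $B$; I expect this to be the main obstacle, amounting to checking that any filtration or propagation-control conditions appearing in the definition of the localization algebra with filtered coefficients degenerate harmlessly when $B_r=B$ for all $r$.
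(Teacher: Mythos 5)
Your proposal is correct and follows the paper's own route: the paper likewise obtains this corollary by setting $A=\C$ in Theorem \ref{main-corollary}, observing that $\prod^{u}_{\N}(\C\otimes\K(H))=\prod_{\N}\K(H)$ with the trivial filtration and that the conjecture with filtered coefficients in $\C$ is the original conjecture. The one step you flag as a potential obstacle --- that for trivially filtered $B$ the conjecture with filtered coefficients in $B$ coincides with the conjecture with coefficients in $B$ --- is in fact how the paper \emph{defines} the latter, so nothing remains to be checked there.
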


Corollary \ref{Cor-intro} is a general result for the coarse Baum-Connes conjecture of product metric spaces since we do not require any geometric conditions on each metric space.

Besides, the coarse Baum-Connes conjecture with filtered coefficients is also applied to the quantitative coarse Baum-Connes conjecture, which is a refinement of the original conjecture. We will explore this connection in a subsequent paper (cf. \cite{Zhang-quan-CBC}). 

The paper is organized as follows. In Section \ref{Sec-Pre}, we recall some ingredients from the $K$-theory of $C^{\ast}$-algebras. In Section \ref{Sec-CBC}, we introduce Roe algebras with filtered coefficients, localization algebras with filtered coefficients and the (consistent) coarse Baum-Connes conjecture with filtered coefficients. In Section \ref{main-sec}, we introduce the localization algebras along one direction for products to build a connection between the coarse Baum-Connes conjecture with filtered coefficients for products and the conjecture with filtered coefficients for their factors. And we prove Theorem \ref{main-Thm} in the end of this section. In Section \ref{Section-ex}, we apply our results to a number of product metric spaces.

\section{Preliminaries}\label{Sec-Pre}
In this section, we shall recall three important techniques for computing the $K$-theory of $C^{\ast}$-algebras. The main reference is Willett and Yu's book \cite{WillettYu-Book}.

\begin{lemma}\cite[Proposition 2.7.5]{WillettYu-Book}\label{isometryequivalent}
	Let $\alpha: C\rightarrow D$ be a $\ast$-homomorphism and $v$ be a partial isometry in the multiplier algebra of $D$ satisfying $\alpha(c)v^{\ast}v=\alpha(c)$ for all $c\in C$. Define $\beta: C\rightarrow D$ by $\beta(c)=v \alpha(c) v^{\ast}$. Then $\alpha$ and $\beta$ induce the same morphism on $K$-theory. 
\end{lemma}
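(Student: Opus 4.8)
The plan is to reduce the statement to the standard fact that conjugation by a unitary in a multiplier algebra acts trivially on $K$-theory, after absorbing the partial isometry into a $2\times 2$ matrix trick. First I would extract the extra algebraic identities hidden in the hypothesis: applying $\alpha(c)v^{\ast}v=\alpha(c)$ to $c^{\ast}$ and taking adjoints gives $v^{\ast}v\,\alpha(c)=\alpha(c)$, hence $(1-v^{\ast}v)\alpha(c)=0=\alpha(c)(1-v^{\ast}v)$ for all $c\in C$; I would also record the partial-isometry identities $vv^{\ast}v=v$ and $v^{\ast}vv^{\ast}=v^{\ast}$.

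Next I would amplify to matrices. Let $\iota\colon D\to M_2(D)$ be the inclusion as the top-left corner, which induces an isomorphism on $K$-theory, and set $\tilde\alpha=\iota\circ\alpha$ and $\tilde\beta=\iota\circ\beta$; it suffices to show $\tilde\alpha_{\ast}=\tilde\beta_{\ast}$. Consider
\[
u=\begin{pmatrix} v & 1-vv^{\ast}\\ 1-v^{\ast}v & v^{\ast}\end{pmatrix}\in M_2(M(D))=M\bigl(M_2(D)\bigr).
\]
Using $vv^{\ast}v=v$ and the fact that $1-vv^{\ast}$ and $1-v^{\ast}v$ are projections orthogonal to the relevant corners, a direct computation gives $u^{\ast}u=uu^{\ast}=1$, so $u$ is a unitary in the multiplier algebra of $M_2(D)$. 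Then, using $(1-v^{\ast}v)\alpha(c)=0$ and $\alpha(c)(1-v^{\ast}v)=0$, one checks
\[
u\,\tilde\alpha(c)\,u^{\ast}=\begin{pmatrix} v\alpha(c)v^{\ast} & 0\\ 0 & 0\end{pmatrix}=\tilde\beta(c),
\]
that is, $\tilde\beta=\mathrm{Ad}(u)\circ\tilde\alpha$.

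Finally I would invoke the standard fact that for a unitary $u$ in $M(B)$ the automorphism $\mathrm{Ad}(u)$ of $B$ induces the identity on $K_{\ast}(B)$: on $K_0$ a projection $p$ over $B$ is Murray--von Neumann equivalent to $upu^{\ast}$ via the partial isometry $up$, and the $K_1$ statement follows similarly (or by suspension). Applying this with $B=M_2(D)$ yields $\tilde\beta_{\ast}=\mathrm{Ad}(u)_{\ast}\circ\tilde\alpha_{\ast}=\tilde\alpha_{\ast}$, and composing with $\iota_{\ast}^{-1}$ gives $\beta_{\ast}=\alpha_{\ast}$. There is no serious obstacle here; the only point requiring genuine care is verifying that $u$ is unitary and that conjugation by it sends $\tilde\alpha$ to $\tilde\beta$ — both of which come down to the bookkeeping with the partial-isometry and hypothesis identities collected in the first step — while the passage to $M_2$ and the triviality of inner conjugation on $K$-theory are routine invocations of standard $K$-theory.
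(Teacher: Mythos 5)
Your proof is correct, and it is essentially the standard argument: the paper itself gives no proof of this lemma, simply citing \cite[Proposition 2.7.5]{WillettYu-Book}, and the $2\times 2$ unitary $\begin{pmatrix} v & 1-vv^{\ast}\\ 1-v^{\ast}v & v^{\ast}\end{pmatrix}$ you construct is exactly the device used there (and reused by the author, e.g.\ in the proof of Lemma \ref{covering-isometry}). Nothing further is needed.
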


\begin{definition}\cite[Definition 2.7.11]{WillettYu-Book}\label{Def-quasi-stable}
	A $C^{\ast}$-algebra $B$ is called to be \textit{quasi-stable} if for any positive integer $n$ there exists an isometry $v$ in the multiplier algebra of $M_n(B)$ such that $vv^{\ast}$ is the matrix unit $e_{11}$.
\end{definition}

One of the advantages of quasi-stable $C^{\ast}$-algebras is as follow.

\begin{lemma}\cite[Lemma 12.4.3]{WillettYu-Book}\label{continuous-flow-K}
	Let $B$ be a quasi-stable $C^{\ast}$-algebra and $C_{ub}([0,\infty), B)$ be the $C^{\ast}$-algebra consisting of all bounded, uniformly continuous functions from $[0,\infty)$ to $B$. Then the evaluation at zero map 
	$$e: C_{ub}([0,\infty), B) \rightarrow B$$
	induces an isomorphism on $K$-theory.
\end{lemma}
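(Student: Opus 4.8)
The plan is to split $e$ at the level of $\ast$-homomorphisms, reduce the statement to the vanishing of the $K$-theory of its kernel, and then kill that $K$-theory by an Eilenberg-swindle argument powered by quasi-stability.

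First I would observe that $e$ admits a natural section: the $\ast$-homomorphism $\iota\colon B\to C_{ub}([0,\infty),B)$ sending $b$ to the constant function $t\mapsto b$ is well defined (constant functions are bounded and uniformly continuous) and satisfies $e\circ\iota=\mathrm{id}_B$. Hence there is a split short exact sequence
$$0\longrightarrow J\longrightarrow C_{ub}([0,\infty),B)\xrightarrow{\;e\;}B\longrightarrow 0,\qquad J:=\{f\in C_{ub}([0,\infty),B):f(0)=0\}.$$
In the associated six-term sequence in $K$-theory the splitting $\iota_{\ast}$ forces both connecting maps to vanish, so $K_{\ast}(C_{ub}([0,\infty),B))\cong K_{\ast}(J)\oplus K_{\ast}(B)$ with $e_{\ast}$ the projection onto the $K_{\ast}(B)$-summand. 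Thus the lemma is equivalent to the assertion $K_{\ast}(J)=0$.

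Next I would note that $J$ inherits quasi-stability from $B$: for each $n$, the constant function with value the isometry $v\in M(M_n(B))$ supplied by Definition \ref{Def-quasi-stable} is an isometry in the multiplier algebra of $M_n(J)$ (identified with the functions in $C_{ub}([0,\infty),M_n(B))$ vanishing at $0$) with range projection $e_{11}$. The vanishing $K_{\ast}(J)=0$ is then an Eilenberg-swindle statement: I would build a $\ast$-homomorphism $\Theta$ from $J$ into a suitable (matrix or stable) amplification of $J$ that represents an infinite repetition of the identity, arranged so that $\Theta$ and $\mathrm{id}_J\oplus\Theta$ are conjugate by a partial isometry — obtained from the $v$'s together with a shift toward infinity along $[0,\infty)$ — lying in the relevant multiplier algebra. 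Lemma \ref{isometryequivalent} then identifies the induced $K$-theory maps of $\Theta$ and $\mathrm{id}_J\oplus\Theta$, giving $\Theta_{\ast}=\mathrm{id}_{J,\ast}+\Theta_{\ast}$ and hence $\mathrm{id}_{J,\ast}=0$. Morally this reflects the fact that the half-line is flasque, so that the associated continuous-field algebra absorbs itself.

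The step I expect to be the main obstacle is making the swindle rigorous. A literal infinite diagonal sum of copies of $\mathrm{id}_J$ is only a multiplier, not an element of $J$, so the repeated copies must be redistributed: the non-compactness of $[0,\infty)$ supplies the countable index set and the room near $0$ reclaimed by the shift, while quasi-stability supplies the matrix room over $B$. At the same time, every reparametrisation of $[0,\infty)$ entering the construction has to be controlled so that boundedness and a single uniform modulus of continuity are maintained across all the copies simultaneously — which is exactly why the hypothesis is imposed for $C_{ub}$, and why quasi-stability, rather than a weaker stabilisation condition, is assumed. As a sanity check on why a swindle (and not a contraction) is unavoidable: the naive candidate homotopy $f\mapsto f(s\,\cdot\,)$, $s\in[0,1]$, between $\mathrm{id}$ and $\iota\circ e$ fails to be norm-continuous in $s$ on $C_{ub}([0,\infty),B)$.
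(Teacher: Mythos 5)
The paper does not prove this lemma at all—it is imported directly from \cite[Lemma 12.4.3]{WillettYu-Book}—and your plan is the standard argument for it: the reduction to $K_{\ast}(J)=0$ via the constant-function splitting is correct, and the Eilenberg swindle on $J$ powered by shifts toward infinity and quasi-stability is the right mechanism. The one point you leave implicit and should record when writing it up: each shifted copy $f(\max(t-n,0))$ vanishes for $t\in[0,n]$, so for every fixed $t$ only finitely many summands of $\Theta(f)(t)$ are nonzero, which is exactly what places $\Theta(f)$ in the algebra rather than merely in its multipliers; and uniform continuity of each individual $f$ makes $s\mapsto f(\max(t-s,0))$ a pointwise norm-continuous homotopy of endomorphisms of $J$, which is what identifies $\Theta$ with its composite with the unit shift and closes the swindle.
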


The following Mayer-Vietoris six-term exact sequence is a key technique to compute the $K$-theory of $C^{\ast}$-algebras by their ideals. Please refer to \cite[Proposition 2.7.15]{WillettYu-Book} or \cite{Higson-Roe-Yu} for the proof.

\begin{lemma}\label{Lem-MV-sequence}
	Let $I_1$ and $I_2$ be two closed ideals in a $C^{\ast}$-algebra $B$. Assume $I_1+I_2$ is dense in $B$. Then there exists a Mayer-Vietoris six-term exact sequence:
	$$\xymatrix{
		K_0(I_1\cap I_2) \ar[r] & 
		K_0(I_1)\oplus K_0(I_2) \ar[r] & 
		K_0(B)\ar[d] \\
		K_1(B)\ar[u] &
		K_1(I_1)\oplus K_1(I_2)\ar[l] &
		K_1(I_1\cap I_2),\ar[l] 
	}$$
	which is natural for triples $(I_1, I_2, B)$.
\end{lemma}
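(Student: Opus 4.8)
The plan is to deduce the statement from a comparison of two ordinary six-term exact sequences by a Barratt--Whitehead type splicing argument. First I would note that the hypothesis that $I_1+I_2$ is dense in $B$ already forces the quotient map $q_1\colon B\to B/I_1$ to restrict to a \emph{surjection} $\bar q_1\colon I_2\to B/I_1$: its image contains $(I_1+I_2)/I_1$, which is dense, and the image of a $\ast$-homomorphism between $C^{\ast}$-algebras is closed, hence $\bar q_1(I_2)=B/I_1$. Since $\ker\bar q_1=I_1\cap I_2$, I obtain a short exact sequence
\[
0\longrightarrow I_1\cap I_2\longrightarrow I_2\xrightarrow{\ \bar q_1\ }B/I_1\longrightarrow 0,
\]
which maps, via the inclusions $\iota_1\colon I_1\cap I_2\hookrightarrow I_1$, $j_2\colon I_2\hookrightarrow B$ and the identity on $B/I_1$, to the short exact sequence $0\to I_1\to B\xrightarrow{q_1}B/I_1\to 0$.

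Next I would apply the six-term exact sequence in $K$-theory to each row. By naturality of the index and exponential maps, this produces a commutative ladder of two $2$-periodic long exact sequences, in which the vertical map on every $K_\ast(B/I_1)$ term is the identity --- in particular an isomorphism. I would then invoke the elementary diagram lemma (Barratt--Whitehead): a morphism between two long exact sequences of abelian groups which is an isomorphism on every third term assembles the remaining terms into a single long exact sequence. Applied here, with $\iota_k\colon I_1\cap I_2\hookrightarrow I_k$ and $j_k\colon I_k\hookrightarrow B$ for $k=1,2$, it yields
\[
\cdots\to K_i(I_1\cap I_2)\xrightarrow{x\mapsto((\iota_1)_\ast x,\,-(\iota_2)_\ast x)} K_i(I_1)\oplus K_i(I_2)\xrightarrow{(y,z)\mapsto (j_1)_\ast y+(j_2)_\ast z} K_i(B)\xrightarrow{\ \partial\ }K_{i-1}(I_1\cap I_2)\to\cdots ,
\]
where $\partial$ is the composite of $(q_1)_\ast\colon K_i(B)\to K_i(B/I_1)$ with the connecting homomorphism $K_i(B/I_1)\to K_{i-1}(I_1\cap I_2)$ of the upper row. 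Wrapping this periodic sequence around, using $K_i\cong K_{i+2}$, gives exactly the six-term diagram in the statement. Naturality for triples $(I_1,I_2,B)$ is then inherited: a morphism of triples induces compatible morphisms of the two short exact sequences above, hence of their $K$-theory six-term sequences, and the Barratt--Whitehead construction is functorial in such data.

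The part that requires genuine care is the Barratt--Whitehead splicing itself --- checking exactness at each of the three positions of the new sequence is a routine but slightly lengthy diagram chase --- together with a consistent sign convention, so that $\partial$ is well defined independently of which of the two symmetric routes ($B\to B/I_1$ versus $B\to B/I_2$) one uses to describe it. By contrast, the reduction step (``dense sum $\Rightarrow\bar q_1$ surjective'') is entirely painless once one uses that images of $\ast$-homomorphisms are automatically closed, so no completion or approximation issues intervene.
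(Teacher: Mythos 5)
Your argument is correct: density of $I_1+I_2$ plus the automatic closedness of images of $\ast$-homomorphisms gives the surjection $I_2\to B/I_1$ with kernel $I_1\cap I_2$, and the Barratt--Whitehead splicing of the resulting ladder of six-term sequences (with the sign on one of the two inclusions and $\partial$ defined as the composite through $K_\ast(B/I_1)$) yields exactly the stated Mayer--Vietoris sequence and its naturality. The paper offers no proof of its own, deferring to \cite[Proposition 2.7.15]{WillettYu-Book} and \cite{Higson-Roe-Yu}, and your route is essentially the standard argument given there.
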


Now, Let us fix some notations in the rest of this paper. Let $(X,d)$ be a proper metric space, namely, every closed ball in $X$ is compact. Since proper metric spaces are separable, thus we can choose a countable dense subset $Z_X$ in $X$. Let $A$ be a filtered $C^{\ast}$-algebra acting on a Hilbert space $H_A$. Fixed $H$ as a separable Hilbert space. And let $\K(H')$ be the algebra of compact operators on a Hilbert space $H'$.

\section{The coarse Baum-Connes conjecture with filtered coefficients}\label{Sec-CBC}
In this section, we will introduce the coarse Baum-Connes conjecture with filtered coefficients which generalizes the original conjecture. 

\subsection{Roe algebras with filtered coefficients}
The Roe algebra was introduced by Roe in \cite{Roe1993} motivated by the index theory on open manifolds \cite{Roe1988}. In this subsection, we introduce a notion of the Roe algebra with filtered coefficients in a filtered $C^{\ast}$-algebra. 

\begin{definition}\label{Def-propagation}
	Let $X$ and $Y$ be two proper metric spaces. Let $Z_X$ and $Z_Y$ be two countable dense subsets of $X$ and $Y$, respectively. Let $\chi$ be the characteristic function. 
	\begin{enumerate}
		\item For a bounded operator $T: \XA\rightarrow \YA$. The \textit{support} of $T$, denoted by $\supp(T)$, is defined to be the set consisting of all $(x, y)\in X\times Y$ satisfying $(\chi_V\otimes I \otimes I) T (\chi_U\otimes I \otimes I) \neq 0$ for any open neighborhoods $U$ and $V$ of $x$ and $y$, respectively.
		\item For a bounded operator $T$ on $\XA$. The \textit{propagation} of $T$, denoted by $\prop(T)$, is defined to be $\sup\{d(x, y): (x,y)\in \supp(T)\}$.
	\end{enumerate}
\end{definition}

\begin{definition}\label{Def-Roe-filcoeff}
	The \textit{Roe algebra of $X$ with filtered coefficients in $A$}, denoted by $C^{\ast}_f(X, A)$, is defined to be the norm closure of the $\ast$-algebra consisting of all operators $T$ on $\XA$ satisfying that 
	\begin{itemize}
		\item $T$ has finite propagation;
		\item there exists $s>0$ such that $(\chi_K\otimes I \otimes I) T$ and $T (\chi_K\otimes I\otimes I)$ belong to $\K(\ell^2(Z_X) \otimes H) \otimes A_s$ for any compact subset $K\subseteq X$.
	\end{itemize}
\end{definition}

\begin{remark}
	If a $C^{\ast}$-algebra $A$ is endowed with the trivial filtration, namely, $A_r=A$ for each $r>0$. Then $C_f^{\ast}(X, A)$ is the \textit{Roe algebra of $X$ with coefficients in $A$}, also denoted by $C^{\ast}(X, A)$. In particular, the $C^{\ast}$-algebra $C_f^{\ast}(X, \C)$ or $C^{\ast}(X)$ for short, is so-called the \textit{Roe algebra} of $X$.
\end{remark}

Now, we discuss the functoriality on proper metric spaces for Roe algebras with filtered coefficients.

\begin{definition}\label{Def-coarse-map}
	Let $X$ and $Y$ be two proper metric spaces. A map $g: X\rightarrow Y$ is called a \textit{coarse map}, if 
	\begin{itemize}
		\item the inverse image of $g$ of any compact subset is pre-compact;
		\item for any $R>0$, there exists $S>0$ such that $\sup\{d(g(x),g(y)): d(x,y)\leq R\}\leq S$.  
	\end{itemize}
\end{definition}

For a coarse map $g$, define a function $w_g: \R^{+}\rightarrow \R^{+}$ by   $$w_g(R)=\sup\{d(g(x),g(y)):\:\: d(x,y)\leq R\}.$$

\begin{definition}\label{Def-coarse-equi}
	Two proper metric spaces $X$ and $Y$ are called to be \textit{coarsely equivalent}, if there exist two coarse maps $g: X\rightarrow Y$, $h:Y\rightarrow X$ and a constant $c>0$ such that 
	$$\max\{d(hg(x), x), d(gh(y), y)\}\leq c$$
	for any $x\in X$ and $y\in Y$.
\end{definition}

For a coarse map $g: X\rightarrow Y$ and $\delta>0$. Choosing a disjoint Borel cover $(U_i)_{i\in \N}$ of $Y$ with diameter less than $\delta$ and every $U_i$ has non-empty interior, then there exists a family of isometries $V_i: \ell^2(g^{-1}(U_i)\cap Z_X)\otimes H \rightarrow \ell^2(U_i\cap Z_Y)\otimes H$. Let 
$$V_g=\bigoplus_{i\in \N}V_i \otimes I_{H_A}: \XA \rightarrow \YA.$$
Then $V_g$ is an isometry and 
\begin{equation}\label{*}
	\supp(V_g)\subseteq \{(x, y)\in X\times Y: \: d(g(x),y)<\delta\}.
\end{equation}
Thus we have the following lemma.

\begin{lemma}\label{covering-isometry}
	For a coarse map $g: X\rightarrow Y$ and $\delta>0$, the above isometry $V_g$ induces a $\ast$-homomorphism
	$$ad_g: C_f^{\ast}(X, A)\rightarrow C_f^{\ast}(Y, A),$$
	defined by 
	$$ad_g(T)=V_g T V^{\ast}_g,$$
	that satisfies 
	$$\prop(ad_g(T))\leq w_g(\prop(T))+2\delta.$$
	Moreover, another $\ast$-homomorphism $ad'_g$ defined by another isometry $V'_g$ that satisfies the condition \eqref{*} induces the same homomorphism as $ad_g$ on $K$-theory.     
\end{lemma}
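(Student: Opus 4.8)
The plan is to verify the two defining properties of $C_f^\ast(Y,A)$ for $ad_g(T) = V_g T V_g^\ast$ when $T$ is a finite-propagation operator in the dense $\ast$-subalgebra of $C_f^\ast(X,A)$, establish the propagation estimate, check $\ast$-homomorphism properties, extend by continuity to the norm closure, and finally invoke Lemma \ref{isometryequivalent} for the uniqueness statement. First I would observe that $V_g$ is an isometry (a direct sum of isometries with pairwise orthogonal ranges, since the $U_i$ are disjoint), so $V_g^\ast V_g = I$ and hence $ad_g$ is multiplicative and $\ast$-preserving on operators; this is immediate. Next I would prove the propagation bound: if $(x',y') \in \supp(ad_g(T))$, then using the support-composition rule $\supp(STU) \subseteq \supp(S)\circ\supp(T)\circ\supp(U)$ together with $\supp(V_g), \supp(V_g^\ast) \subseteq \{(x,y): d(g(x),y)<\delta\}$ (and its flip) from \eqref{*}, one finds points $x_1, x_2 \in X$ with $d(g(x_1),y')<\delta$, $(x_1,x_2)\in\supp(T)$, $d(g(x_2),x')<\delta$ — wait, more carefully, $d(g(x_2), y')$... — in any case $d(x_1,x_2) \leq \prop(T)$, so $d(g(x_1),g(x_2)) \leq w_g(\prop(T))$, and the triangle inequality gives $d(x',y') \leq w_g(\prop(T)) + 2\delta$. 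In particular $ad_g(T)$ has finite propagation.

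Then I would check the local-compactness-with-uniform-filtration condition. Given a compact $K \subseteq Y$, I want $(\chi_K \otimes I \otimes I)\, ad_g(T)$ and its adjoint to lie in $\K(\ell^2(Z_Y)\otimes H)\otimes A_s$ for a single $s$. The key point is that the inverse image $g^{-1}(N_\delta(K))$ of a $\delta$-neighborhood of $K$ is pre-compact (first bullet of Definition \ref{Def-coarse-map}), so $\overline{g^{-1}(N_\delta(K))} =: L$ is compact in $X$; since $\supp(V_g^\ast) \subseteq \{d(g(x),y)<\delta\}$, we get $(\chi_K\otimes I\otimes I) V_g = (\chi_K \otimes I \otimes I) V_g (\chi_L \otimes I \otimes I)$, so $(\chi_K\otimes I\otimes I)\,ad_g(T) = (\chi_K \otimes I \otimes I) V_g (\chi_L\otimes I\otimes I) T V_g^\ast$. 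By hypothesis on $T$ there is an $s$ (independent of $K$, since $s$ is a property of $T$ alone) with $(\chi_L \otimes I\otimes I)T \in \K(\ell^2(Z_X)\otimes H)\otimes A_s$; multiplying on the left by the bounded operator $(\chi_K\otimes I\otimes I)V_g$ (which lies in $\mathcal{B}(\ell^2(Z_X)\otimes H)\otimes 1$, hence in the multiplier algebra of $\K(\cdots)\otimes A_s$) and on the right by the bounded operator $V_g^\ast$ keeps us inside $\K(\ell^2(Z_Y)\otimes H)\otimes A_s$ — here one uses that $\K(H_1)\otimes A_s$ is an ideal-like module over $\mathcal{B}(H_1)\otimes 1$ and that $A_s$ is a closed subspace closed under the relevant multiplications, or more simply that $\K\otimes A_s$ is closed and $V_g, V_g^\ast$ act as multipliers. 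The adjoint case $ad_g(T)(\chi_K\otimes I\otimes I)$ is symmetric. Hence $ad_g(T) \in C_f^\ast(Y,A)$ on the dense $\ast$-subalgebra.

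Having a $\ast$-homomorphism on the dense $\ast$-subalgebra that is norm-contractive (since $\|V_g T V_g^\ast\| \leq \|T\|$, as $V_g$ is an isometry), it extends uniquely to a $\ast$-homomorphism $ad_g: C_f^\ast(X,A) \to C_f^\ast(Y,A)$, and the propagation estimate passes to the dense subalgebra level as stated (propagation is only defined for finite-propagation elements). For the final sentence: if $V_g'$ is another isometry satisfying \eqref{*}, then $v := V_g' V_g^\ast$ is a partial isometry in the multiplier algebra of $C_f^\ast(Y,A)$ — one checks its support is contained in $\{d(y,y')<2\delta\}$, so it is a multiplier — with $v^\ast v\, ad_g(T) = V_g V_g^{\prime\ast} V_g' V_g^\ast\, V_g T V_g^\ast = V_g T V_g^\ast = ad_g(T)$ (using $V_g^{\prime\ast}V_g' = I$ and $V_g^\ast V_g = I$), and $v\, ad_g(T)\, v^\ast = V_g' T V_g^{\prime\ast} = ad_g'(T)$; so Lemma \ref{isometryequivalent} applies and $ad_g$, $ad_g'$ agree on $K$-theory.

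I expect the main obstacle to be the bookkeeping in the local-compactness step — specifically, verifying cleanly that multiplying an element of $\K(\ell^2(Z_X)\otimes H)\otimes A_s$ by the partial-isometry-type operators $(\chi_K\otimes I\otimes I)V_g$ and $V_g^\ast$ (which are \emph{not} in the algebra but act as multipliers and do not increase the filtration degree because they carry no $A$-component) lands back in $\K(\ell^2(Z_Y)\otimes H)\otimes A_s$ with the \emph{same} $s$; this uses that $V_g = \bigoplus (V_i \otimes I_{H_A})$ tensors trivially with the $A$-direction, so it only rearranges the $\K$-part. Everything else (isometry, multiplicativity, propagation bound via support composition, continuous extension, the uniqueness via Lemma \ref{isometryequivalent}) is routine.
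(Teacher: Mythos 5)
Your proposal is correct and follows essentially the same route as the paper's proof: the propagation bound via support composition, the local-compactness check via the compactness of $\overline{g^{-1}(B_\delta(K))}$ and the fact that $V_g$ acts trivially on the $A$-tensor factor, and the $K$-theory independence via Lemma \ref{isometryequivalent}. The only (harmless) difference is in the last step, where you apply Lemma \ref{isometryequivalent} directly with the partial isometry $v=V'_gV^{\ast}_g$, whereas the paper conjugates by a $2\times 2$ matrix before invoking the same lemma; your version is slightly more streamlined.
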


\begin{proof}
	Firstly, if $(y, y')\in \supp(V_g T V^{\ast}_g)$, then there exist $x, x'\in X$ such that $(y, x)\in \supp(V^{\ast}_g)$, $(x, x')\in \supp(T)$ and $(x', y')\in \supp(V_g)$. Thus 
	$$d(y, y')\leq d(y, g(x))+d(g(x), g(x'))+d(g(x'), y')\leq w_g(\prop(T))+2\delta,$$ 
	which implies $\prop(ad_g(T))\leq w_g(\prop(T))+2\delta$. 
	
	Secondly, for any compact subset $K$ of $Y$, let 
	$$K'=\{x\in X:\:\:\textrm{there exists $y\in K$ such that $(y, x)\in \supp(V^{\ast}_g)$}\}.$$
	Then we have $K'\subseteq g^{-1}(\overline{B_{\delta}(K)})$ since $d(g(x),y)<\delta$ for any $(y, x)\in \supp(V^{\ast}_g)$, where $\overline{B_{\delta}(K)}=\{y\in Y: d(y,K)\leq \delta\}$ is compact by the properness of $Y$. Thus $K'$ is a compact subset of $X$ by the properness of $g$. Moreover, since $T (\chi_{K'}\otimes I\otimes I) \in \K(\ell^2(Z_X)\otimes H) \otimes A_s$ and $V_g T V^{\ast}_g (\chi_K\otimes I\otimes I)=V_g T (\chi_{K'}\otimes I\otimes I) V^{\ast}_g (\chi_K\otimes I\otimes I)$. Thus, we have $V_g T V^{\ast}_g (\chi_K\otimes I\otimes I) \in \K(\ell^2(Z_X)\otimes H) \otimes A_s$. We can obtain the similar result for $(\chi_K\otimes I\otimes I) V_g T V^{\ast}_g$. In conclusion, $ad_g$ is well-defined.
	
	Finally, if there exists another isometry $V'_g: \XA \rightarrow \YA$ satisfying the condition \eqref{*}. Then by similar arguments as above, $V_gV^{\ast}_g$, $V_g'V'^{\ast}_g$, $V'_gV^{\ast}_g$ are all multipliers of $C_f^{\ast}(Y, A)$. And we have
	$$\begin{pmatrix}
		0 & 0\\
		0 & V'_g T V'^{\ast}_g 
	\end{pmatrix}
	=
	\begin{pmatrix}
		I-V_gV^{\ast}_g & V_gV'^{\ast}_g\\
		V'_gV^{\ast}_g  & I-V'_gV'^{\ast}_g 
	\end{pmatrix}
	\begin{pmatrix}
		V_g T V^{\ast}_g & 0\\
		0                  & 0 
	\end{pmatrix}
	\begin{pmatrix}
		I-V_gV^{\ast}_g & V_gV'^{\ast}_g\\
		V'_gV^{\ast}_g  & I-V'_gV'^{\ast}_g 
	\end{pmatrix},
	$$
	which implies that $ad'_g$ and $ad_g$ induce the same homomorphism on $K$-theory by Lemma \ref{isometryequivalent}. 
\end{proof}

If $g$ is the identity map on $X$, then we can choose $V_g$ to be a unitary. Thus we obtain the following corollary.

\begin{corollary}
	For different choices of the dense subset $Z_{X}$ of $X$, Roe algebras with filtered coefficients are non-canonically isomorphic and their $K$-theory are canonically isomorphic. 
\end{corollary}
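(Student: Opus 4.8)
The plan is to apply Lemma \ref{covering-isometry} to the identity map $g=\mathrm{id}_X$, but regarding its source as $X$ equipped with the dense subset $Z_X$ and its target as $X$ equipped with a second dense subset $Z_X'$. The identity is trivially a coarse map (indeed a coarse equivalence, being its own coarse inverse), so for any $\delta>0$ the construction preceding Lemma \ref{covering-isometry} produces an isometry
$$V_g=\bigoplus_{i\in\N}V_i\otimes I_{H_A}\colon \ell^2(Z_X)\otimes H\otimes H_A\longrightarrow \ell^2(Z_X')\otimes H\otimes H_A$$
with $\supp(V_g)\subseteq\{(x,y): d(x,y)<\delta\}$, and by Lemma \ref{covering-isometry} conjugation by $V_g$ gives a $\ast$-homomorphism $ad_g\colon C_f^{\ast}(X,A)_{Z_X}\to C_f^{\ast}(X,A)_{Z_X'}$ whose induced map on $K$-theory does not depend on the choice of $V_g$ (nor on the cover $(U_i)$ or on $\delta$).

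The key observation is that here $V_g$ may be taken to be a \emph{unitary}. Indeed, since each $U_i$ has non-empty interior and $Z_X$, $Z_X'$ are both dense in $X$, the sets $U_i\cap Z_X$ and $U_i\cap Z_X'$ are non-empty, so $\ell^2(U_i\cap Z_X)\otimes H$ and $\ell^2(U_i\cap Z_X')\otimes H$ are both separable infinite-dimensional Hilbert spaces, and we may choose each $V_i$ unitary; hence $V_g$ is unitary. Consequently $ad_g$ is a $\ast$-isomorphism: its inverse is $S\mapsto V_g^{\ast}SV_g$, which maps $C_f^{\ast}(X,A)_{Z_X'}$ into $C_f^{\ast}(X,A)_{Z_X}$ by the same argument with the roles of $Z_X$ and $Z_X'$ interchanged, since $\supp(V_g^{\ast})=\{(y,x):(x,y)\in\supp(V_g)\}$ again satisfies a support estimate of the form \eqref{*} for the identity map. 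This gives the non-canonical $\ast$-isomorphism of the two Roe algebras.

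For the canonicity on $K$-theory, the ``Moreover'' clause of Lemma \ref{covering-isometry} already shows that $(ad_g)_{\ast}\colon K_{\ast}(C_f^{\ast}(X,A)_{Z_X})\to K_{\ast}(C_f^{\ast}(X,A)_{Z_X'})$ is independent of all the above choices. It remains to record that this family of isomorphisms is compatible: when $Z_X=Z_X'$ one may take $V_g=I$, so $(ad_g)_{\ast}=\mathrm{id}$; and for three dense subsets the composite of two such isometries is again an isometry with support within the $2\delta$-neighbourhood of the diagonal, hence (after enlarging $\delta$) satisfies a support estimate of the form \eqref{*}, so by the uniqueness statement the composition of the two $K$-theory maps equals the map attached to the third pair. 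Thus these isomorphisms glue to a single canonical identification of the $K$-theory groups.

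The only point requiring care — though it is entirely routine — is verifying that $ad_g$ and its inverse land in the correct Roe algebras, i.e.\ preserve finite propagation and the local compactness condition of Definition \ref{Def-Roe-filcoeff}; this is exactly the computation carried out in the proof of Lemma \ref{covering-isometry}, and the genuinely new input is only the elementary Hilbert-space fact used to upgrade the $V_i$ from isometries to unitaries.
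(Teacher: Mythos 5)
Your proposal is correct and follows essentially the same route as the paper, which simply observes that for $g=\mathrm{id}_X$ the isometry $V_g$ of Lemma \ref{covering-isometry} can be taken to be a unitary (each $\ell^2(U_i\cap Z_X)\otimes H$ being separable and infinite-dimensional), so that $ad_g$ is a $\ast$-isomorphism, while the ``Moreover'' clause gives the canonicity on $K$-theory. Your additional verification of compatibility under composition is a reasonable elaboration of what the paper leaves implicit.
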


\begin{corollary}\label{coarse-equi-isomor}
	If two proper metric spaces $X$ and $Y$ are coarsely equivalent, then 
	$$C_f^{\ast}(X, A)\cong C_f^{\ast}(Y, A)$$
	for any filtered $C^{\ast}$-algebras $A$.
\end{corollary}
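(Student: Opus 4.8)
The plan is to upgrade the $\ast$-homomorphism $ad_g$ produced by Lemma \ref{covering-isometry} to a genuine $\ast$-isomorphism, whose inverse is realized by the covering isometry associated to the coarse map in the opposite direction. Fix coarse maps $g\colon X\to Y$, $h\colon Y\to X$ and a constant $c>0$ as in Definition \ref{Def-coarse-equi}, and countable dense subsets $Z_X\subseteq X$, $Z_Y\subseteq Y$. The whole point is that, for a coarse \emph{equivalence}, one can choose the isometry implementing $ad_g$ to be a \emph{unitary} $U\colon\XA\to\YA$ satisfying the support condition \eqref{*}; then $ad_g(T)=UTU^\ast$ is automatically injective with an obvious candidate inverse.

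To build $U$: since $gh$ is within $c$ of $\mathrm{id}_Y$, the set $g(X)$ is $c$-dense in $Y$. Assign to each $x\in Z_X$ a point $\alpha(x)\in Z_Y$ with $d(g(x),\alpha(x))<\varepsilon$ (possible as $Z_Y$ is dense), obtaining a partition $Z_X=\bigsqcup_{y\in Z_Y}C_y$ with $C_y=\alpha^{-1}(y)$. Using $c$-density of $g(X)$ together with density of $Z_X$ and the control function $w_g$, every $y\in Z_Y$ lies within a fixed radius $\kappa$ of the set $N=\{y\in Z_Y: C_y\neq\emptyset\}$; choose a map $\eta\colon Z_Y\to N$ that is the identity on $N$ and moves each point by at most $\kappa$. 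For every $y_0\in N$ the Hilbert spaces $\ell^2(C_{y_0})\otimes H$ and $\ell^2(\eta^{-1}(y_0))\otimes H$ are both separable and infinite-dimensional (each of $C_{y_0}$ and $\eta^{-1}(y_0)$ is nonempty and countable, and $H$ is separable infinite-dimensional), hence unitarily isomorphic; pick such a unitary $U_{y_0}$, and set $U=\big(\bigoplus_{y_0\in N}U_{y_0}\big)\otimes I_{H_A}$. Since $\{C_{y_0}\}_{y_0\in N}$ partitions $Z_X$ and $\{\eta^{-1}(y_0)\}_{y_0\in N}$ partitions $Z_Y$, the operator $U$ is a unitary $\XA\to\YA$, and if $(x,y)\in\supp(U)$ then $x\in C_{y_0}$, $y\in\eta^{-1}(y_0)$ for a common $y_0$, so $d(g(x),y)\leq d(g(x),y_0)+d(y_0,y)<\varepsilon+\kappa$. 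Thus $U$ satisfies \eqref{*} for $g$, and by Lemma \ref{covering-isometry} conjugation by $U$ gives a $\ast$-homomorphism $ad_g\colon C^\ast_f(X,A)\to C^\ast_f(Y,A)$.

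Next I would note that $U^\ast$ is a unitary $\YA\to\XA$ satisfying \eqref{*} for the coarse map $h$: if $(y,x)\in\supp(U^\ast)$, i.e.\ $(x,y)\in\supp(U)$, then $d(h(y),x)\leq d(h(y),hg(x))+d(hg(x),x)\leq w_h\big(d(y,g(x))\big)+c\leq w_h(\varepsilon+\kappa)+c$. Hence Lemma \ref{covering-isometry}, applied with $(X,Y,g)$ replaced by $(Y,X,h)$ and the isometry $U^\ast$, produces a $\ast$-homomorphism $C^\ast_f(Y,A)\to C^\ast_f(X,A)$, $S\mapsto U^\ast SU$. Because $U$ is unitary, the two composites are $T\mapsto U^\ast(UTU^\ast)U=T$ and $S\mapsto U(U^\ast SU)U^\ast=S$, so $ad_g$ is the desired $\ast$-isomorphism $C^\ast_f(X,A)\cong C^\ast_f(Y,A)$, valid for every filtered $C^\ast$-algebra $A$.

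The main obstacle is exactly the construction of the unitary $U$. A naive covering isometry built from a disjoint Borel cover $(U_i)$ of $Y$, as before Lemma \ref{covering-isometry}, fails to be surjective precisely over those cover elements disjoint from the image $g(X)$, and it may also be "wasteful'' when several cover elements of $Y$ sit near one fibre of $g$. Reorganizing the $\ell^2$-module so that these empty and multiply-covered cells are absorbed into neighbouring ones — which is what the map $\eta$ and the Hilbert-space isomorphisms $U_{y_0}$ do — is possible only because $H$ is infinite-dimensional and $g$ is coarsely surjective with controlled distortion, i.e.\ because $g$ is a coarse \emph{equivalence} and not merely a coarse map. Once $U$ is available, the verification that conjugation by $U$ (respectively $U^\ast$) preserves finite propagation and the filtered local-compactness condition of Definition \ref{Def-Roe-filcoeff} is routine and is already contained in the proof of Lemma \ref{covering-isometry}.
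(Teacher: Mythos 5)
Your proof is correct and follows essentially the same route as the paper: both arguments upgrade the covering isometry of Lemma \ref{covering-isometry} to a genuine unitary satisfying the support condition \eqref{*} --- using coarse surjectivity of $g$ to match nonempty cells on the two sides and infinite-dimensionality of $H$ to choose unitaries between them --- and then conjugate. The paper realizes this by choosing the disjoint Borel cover of $Y$ so that every cell has nonempty $g$-preimage, whereas you reorganize the cells directly via the maps $\alpha$ and $\eta$; your explicit verification that $U^{\ast}$ satisfies \eqref{*} for $h$ (so that conjugation by $U^{\ast}$ lands in $C_f^{\ast}(X, A)$) makes the surjectivity of $ad_g$ more transparent than the paper's one-line conclusion, but the underlying idea is identical.
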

\begin{proof}
	By the definition of coarse equivalence (see Definition \ref{Def-coarse-equi}), there exist two coarse maps $g: X \rightarrow Y$, $h: Y \rightarrow X$ and a constant $c>0$ such that 
	$$\max\{d(hg(x), x), d(gh(y), y)\}\leq c.$$ 
	Thus we can choose a disjoint Borel cover $(U_i)_{i\in \N}$ of $Y$ with every $U_i$ has non-empty interior such that $g^{-1}(U_i)\neq \emptyset$ and the diameter of $U_i$ is less than some positive number $\delta$ for any $i\in \N$. Let $Z_Y$ be a countable dense subset of $Y$ and $Z_X$ be a countable dense subset of $X$ with $Z_X \cap g^{-1}(U_i)\neq \emptyset$ for all $i\in \N$. Then for every $i\in \N$, there exists a family of unitaries $V_i: \ell^2(g^{-1}(U_i)\cap Z_X)\otimes H\rightarrow \ell^2(U_i\cap Z_Y)\otimes H$. Define
	$$V_g=\bigoplus_{i\in \N}V_i \otimes I_{H_A}: \XA \rightarrow \YA.$$
	Then $V_g$ is a unitary and we have
	$$\supp(V_g)\subseteq \{(x,y)\in X\times Y: \: d(g(x), y)<\delta\}.$$
	Thus by Lemma \ref{covering-isometry}, $ad_g$ is an isomorphism from $C_f^{\ast}(X, A)$ to $C_f^{\ast}(Y, A)$.
\end{proof}
In the end of this subsection, we discuss two properties of Roe algebras with filtered coefficients. Firstly, we show that they are filtered $C^{\ast}$-algebas equipped with the following filtrations.

\begin{lemma}\label{filtration-Roe-algebra}
	The Roe algebra $C_f^{\ast}(X, A)$ is a filtered $C^{\ast}$-algebra equipped with a filtration $C_f^{\ast}(X, A)_r$ consisting of all operators $T\in C_f^{\ast}(X, A)$ such that $\prop(T)\leq r$ and $(\chi_K\otimes I\otimes I)T, T(\chi_K\otimes I\otimes I)\in \K(\ell^2(Z_X)\otimes H)\otimes A_r$ for any compact subset $K\subseteq X$.
\end{lemma}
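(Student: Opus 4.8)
The plan is to verify directly the four axioms of Definition \ref{Def-filtration} for the family $\bigl(C_f^{\ast}(X,A)_r\bigr)_{r>0}$, treating $C_f^{\ast}(X,A)$ as the ambient $C^{\ast}$-algebra. First I would check that each $C_f^{\ast}(X,A)_r$ is a closed linear subspace: linearity is immediate from the definitions of propagation and of $\K(\ell^2(Z_X)\otimes H)\otimes A_r$ (note $A_r$ is a closed linear subspace of $A$, hence so is $\K(\ell^2(Z_X)\otimes H)\otimes A_r$ inside $\K(\ell^2(Z_X)\otimes H)\otimes A$), and closedness follows because the propagation condition $\prop(T)\le r$ is preserved under norm limits — if $T_n\to T$ with $\prop(T_n)\le r$ then $\supp(T)\subseteq\{(x,y):d(x,y)\le r\}$ since each $(\chi_V\otimes I\otimes I)T(\chi_U\otimes I\otimes I)$ is a norm limit of the corresponding expressions for $T_n$ — together with the fact that the compression conditions $(\chi_K\otimes I\otimes I)T\in\K(\ell^2(Z_X)\otimes H)\otimes A_r$ pass to norm limits for each fixed compact $K$. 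The monotonicity $C_f^{\ast}(X,A)_r\subseteq C_f^{\ast}(X,A)_{r'}$ for $r\le r'$ is clear from $A_r\subseteq A_{r'}$ and from $\prop(T)\le r\Rightarrow\prop(T)\le r'$, and closedness under involution follows because $\prop(T^{\ast})=\prop(T)$, because $A_r$ is closed under involution, and because $T^{\ast}(\chi_K\otimes I\otimes I)=\bigl((\chi_K\otimes I\otimes I)T\bigr)^{\ast}$.

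Next I would verify the multiplicative property $C_f^{\ast}(X,A)_r\cdot C_f^{\ast}(X,A)_{r'}\subseteq C_f^{\ast}(X,A)_{r+r'}$. The propagation estimate $\prop(ST)\le\prop(S)+\prop(T)$ is standard: if $(x,z)\in\supp(ST)$ then there is $y$ with $(x,y)\in\supp(S)$ and $(y,z)\in\supp(T)$, so $d(x,z)\le d(x,y)+d(y,z)\le r+r'$. For the compression condition, given a compact $K\subseteq X$, write $(\chi_K\otimes I\otimes I)ST$. Since $S$ has propagation $\le r$, there is a compact set $K'$ (a metric $r$-neighborhood of $K$, compact by properness of $X$) with $(\chi_K\otimes I\otimes I)S=(\chi_K\otimes I\otimes I)S(\chi_{K'}\otimes I\otimes I)$; then $(\chi_K\otimes I\otimes I)ST=(\chi_K\otimes I\otimes I)S\cdot(\chi_{K'}\otimes I\otimes I)T$, and $(\chi_{K'}\otimes I\otimes I)T\in\K(\ell^2(Z_X)\otimes H)\otimes A_{r'}$ while $(\chi_K\otimes I\otimes I)S\in\K(\ell^2(Z_X)\otimes H)\otimes A_r$; the product lies in $\K(\ell^2(Z_X)\otimes H)\otimes(A_rA_{r'})\subseteq\K(\ell^2(Z_X)\otimes H)\otimes A_{r+r'}$ using $A_rA_{r'}\subseteq A_{r+r'}$. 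The symmetric argument handles $ST(\chi_K\otimes I\otimes I)$. Finally, density of $\bigcup_{r>0}C_f^{\ast}(X,A)_r$ in $C_f^{\ast}(X,A)$ is essentially by construction: the $\ast$-algebra of operators $T$ with finite propagation $R$ and compressions in $\K(\ell^2(Z_X)\otimes H)\otimes A_s$ for some $s$ is dense in $C_f^{\ast}(X,A)$ by Definition \ref{Def-Roe-filcoeff}, and each such $T$ lies in $C_f^{\ast}(X,A)_{\max(R,s)}$.

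The one point requiring a little care — and the main obstacle, such as it is — is showing that the compression condition interacts correctly with products, i.e. producing the compact set $K'$ and checking that the factored compression $(\chi_K\otimes I\otimes I)S(\chi_{K'}\otimes I\otimes I)$ really equals $(\chi_K\otimes I\otimes I)S$; this uses properness of $X$ to guarantee $K'$ is compact and a standard support-calculus argument to see that $S$ sends functions supported off $K'$ to functions supported off $K$. I should also note that $C_f^{\ast}(X,A)$ is unital only in degenerate cases, so no extra verification about a unit lying in each $C_f^{\ast}(X,A)_r$ is needed; if one prefers to work with the unitization the identity operator has propagation $0$ and is handled trivially. Everything else is a routine unwinding of the definitions of support and propagation from Definition \ref{Def-propagation} together with the filtration axioms for $A$.
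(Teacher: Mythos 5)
Your proposal is correct and follows essentially the same route as the paper: the paper's own proof simply records the support containment giving $\prop(TS)\leq\prop(T)+\prop(S)$ and then invokes the filtration axioms of $A$ for the remaining properties, exactly the two ingredients you verify. You have just spelled out in full the routine checks (closedness under norm limits, the factorization through the compact $r$-neighborhood $K'$, density) that the paper leaves implicit.
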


\begin{proof}
	For any two operators $T, S\in C_f^{\ast}(X, A)$, we have that $\supp(TS)$ is contained in the closure of the set 
	{\small$$\{(x,y)\in X\times X: \:\: \textrm{there exists $z\in X$ such that $(x,z)\in \supp(S)$, $(z,y)\in \supp(T)$}\},$$}
	which implies 
	$$\prop(TS)\leq \prop(T)+\prop(S).$$
	Besides, since $(A_r)_{r>0}$ is a filtration of $A$, thus $(C_f^{\ast}(X, A)_r)_{r>0}$ is a filtration of $C_f^{\ast}(X, A)$.
\end{proof}
Secondly, we show that Roe algebras with filtered coefficients are quasi-stable.

\begin{lemma}\label{Roe-quasi-stable}
	Let $X$ be a proper metric space and $A$ be a filtered $C^{\ast}$-algebra acting on a Hilbert space $H_{A}$. Then the Roe algebra $C_{f}^{\ast}(X, A)$ is quasi-stable.
\end{lemma}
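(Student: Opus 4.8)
The plan is to verify the defining property of quasi-stability (Definition \ref{Def-quasi-stable}) directly from the concrete description of $M_n(C_f^\ast(X,A))$, exploiting that the auxiliary Hilbert space $H$ is separable and infinite-dimensional. Fix $n\in\N$. Since $C_f^\ast(X,A)$ acts on $\XA=\ell^2(Z_X)\otimes H\otimes H_A$, the matrix algebra $M_n(C_f^\ast(X,A))$ acts on $\XA\otimes\C^n$; commuting the last two tensor legs identifies this Hilbert space with $\ell^2(Z_X)\otimes(H\otimes\C^n)\otimes H_A$, and together with the identification $M_n\big(\K(\XH)\otimes A_s\big)=\K(\XH\otimes\C^n)\otimes A_s$ this presents $M_n(C_f^\ast(X,A))$ as the $\ast$-algebra of operators $S$ on $\ell^2(Z_X)\otimes(H\otimes\C^n)\otimes H_A$ of finite propagation for which there is $s>0$ with $(\chi_K\otimes I\otimes I)S$ and $S(\chi_K\otimes I\otimes I)$ in $\K(\XH\otimes\C^n)\otimes A_s$ for every compact $K\subseteq X$. (So it is again a Roe algebra with filtered coefficients in $A$, now with auxiliary space $H\otimes\C^n$ in place of $H$.) In these coordinates the matrix unit $e_{11}$ becomes $I_{\XH}\otimes p\otimes I_{H_A}$, where $p$ is the rank-one projection of $\C^n$ onto $\C e_1$.

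Next I would produce the required isometry. Since $H$ is separable and infinite-dimensional, so is $H\otimes\C e_1\cong H$, hence there is an isometry $q\colon H\otimes\C^n\to H\otimes\C^n$ with $q^\ast q=I$ and $qq^\ast=I_H\otimes p$. Set
$$v=I_{\ell^2(Z_X)}\otimes q\otimes I_{H_A},$$
a bounded operator on $\ell^2(Z_X)\otimes(H\otimes\C^n)\otimes H_A$. Then $v^\ast v=I$ and $vv^\ast=I_{\XH}\otimes p\otimes I_{H_A}=e_{11}$, so it remains only to see that $v$ lies in the multiplier algebra of $M_n(C_f^\ast(X,A))$, i.e. that $vS$ and $Sv$ belong to $M_n(C_f^\ast(X,A))$ for every $S$ in it; by density it suffices to treat $S$ in the $\ast$-subalgebra described above.

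For such $S$: since $v$ acts as the identity on the $\ell^2(Z_X)$-leg it has propagation $0$, so $\prop(vS)\le\prop(S)<\infty$ and $\prop(Sv)\le\prop(S)<\infty$, and $v$ commutes with each cutoff $\chi_K\otimes I\otimes I$. Moreover $I_{\ell^2(Z_X)}\otimes q$ — the operator on $\ell^2(Z_X)\otimes(H\otimes\C^n)$ obtained by dropping the $H_A$-leg — is bounded and hence a multiplier of $\K(\XH\otimes\C^n)$, while the remaining leg $I_{H_A}$ (the unit of the multiplier algebra of $A$) satisfies $I_{H_A}A_s\subseteq A_s$ and $A_sI_{H_A}\subseteq A_s$; so $v$ both left- and right-multiplies $\K(\XH\otimes\C^n)\otimes A_s$ into itself. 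Consequently $(\chi_K\otimes I\otimes I)(vS)=v\big((\chi_K\otimes I\otimes I)S\big)$ and $(vS)(\chi_K\otimes I\otimes I)=v\big(S(\chi_K\otimes I\otimes I)\big)$ again lie in $\K(\XH\otimes\C^n)\otimes A_s$, so $vS$ is in the $\ast$-subalgebra; the same computation handles $Sv$. Passing to norm limits puts $v$ in the multiplier algebra of $M_n(C_f^\ast(X,A))$, and since $n$ was arbitrary, $C_f^\ast(X,A)$ is quasi-stable. The one point needing real care is this last verification that multiplication by $v$ preserves the uniform-filtration condition "$\,\cdot\,\in\K(\XH\otimes\C^n)\otimes A_s$": it works precisely because $v$ is built to act trivially on the coefficient space $H_A$ and to mix only the $H$- and $\C^n$-legs, where there is enough room to absorb $\C^n$ into $H$.
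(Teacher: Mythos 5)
Your proof is correct and follows essentially the same route as the paper: both identify $M_n(C_f^\ast(X,A))$ as acting on $\ell^2(Z_X)\otimes H^n\otimes H_A$ and take the isometry $I\otimes q\otimes I$, where $q$ is an isometry of $H^n$ onto its first summand, so that the range projection is the matrix unit $e_{11}$. The paper's argument is just a terser version of yours; your additional verification that $I\otimes q\otimes I$ is a multiplier (propagation zero, commutes with cutoffs, preserves the $\K(\XH\otimes\C^n)\otimes A_s$ condition) is the part the paper leaves implicit.
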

\begin{proof}
	Let $Z_{X}$ be a countable dense subset of $X$ and $H$ be a separable Hilbert space. For any positive integer $n$, define an isometry $V: H^{n} \rightarrow H \oplus H^{n-1}$ with the first coordinate $H$ as its image. Then $I \otimes V \otimes I: \ell^2(Z_{X}) \otimes H^{n} \otimes H_{A} \rightarrow \ell^2(Z_{X}) \otimes H^{n} \otimes H_{A}$ is an isometry in the multiplier algebra of $M_n(C_{f}^{\ast}(X, A))$ with the propagation zero. Moreover, we have that $(I \otimes V \otimes I)(I \otimes V \otimes I)^{\ast}=I \otimes VV^{\ast} \otimes I=e_{11}$. Thus $C_{f}^{\ast}(X, A)$ is quasi-stable.
\end{proof}

\subsection{Localization algebras with filtered coefficients}
Yu defined a notion of the localization algebra whose $K$-theory provides a model for $K$-homology in \cite{Yu-Localizationalg}. In this subsection, we introduce the localization algebra with filtered coefficients.   

\begin{definition}\label{Def-Localg-filcoeff}
	The \textit{localization algebra of $X$ with filtered coefficients in $A$}, denoted by $C^{\ast}_{L,f}(X, A)$, is defined to be the norm closure of the $\ast$-algebra consisting of all bounded and uniformly continuous functions $u:[0,\infty)\rightarrow C^{\ast}_{f}(X, A)$ satisfying
	$$\lim_{t\rightarrow \infty}\prop(u(t))=0.$$
\end{definition}

\begin{remark}
	If a $C^{\ast}$-algebra $A$ is endowed with the trivial filtration, then $C^{\ast}_{L,f}(X, A)$ is the \textit{localization algebra of $X$ with coefficients in $A$}, also denoted by $C^{\ast}_{L}(X, A)$. In particular, the $C^{\ast}$-algebra $C^{\ast}_{L,f}(X, \C)$ or $C^{\ast}_L(X)$ for short, is so-called the \textit{localization algebra of $X$} (cf. \cite{Yu-Localizationalg}).
\end{remark}

Let $g: X\rightarrow Y$ be a uniformly continuous coarse map and $\{\delta_k\}_{k\in \N}$ be a
decreasing sequence of positive numbers with limit zero. Choosing a sequence of Borel covers $\{\U_k\}_{k\in \N}$ of $Y$ such that 
\begin{itemize}
	\item every cover $\U_k$ is disjoint, i.e. $U_{k,i}\cap U_{k,j}=\emptyset$ for different $i, j$;
	\item every $U_{k,i}\in \U_k$ has non-empty interior;
	\item the diameter of $\U_k$ is less than $\delta_k$ for any $k\in \N$;
	\item $\U_{k+1}$ refines $\U_k$, i.e. every set in $\U_{k+1}$ is contained in some set in $\U_k$ for all $k\in \N$.
\end{itemize}
Then $\ell^2(U_{k,i}\cap Z_Y)\otimes H$ is an infinite dimensional separable Hilbert space for every set $U_{k,i}\in \U_{k}$. Thus we can define an isometry $V_{k,i}: \ell^2(g^{-1}(U_{k,i})\cap Z_X)\otimes H \rightarrow \ell^2(U_{k,i}\cap Z_Y)\otimes H$ with infinite dimensional cokernel. By the fourth condition as above, there exists $\{U_{k+1,ij}\}_{1\leq j \leq n}\subseteq \U_{k+1}$ such that $\cup^n_{j=1}U_{k+1,ij}=U_{k,i}$. Then we can choose a family of isometries $V_{k+1,ij}: \ell^2(g^{-1}(U_{k+1,ij})\cap Z_X)\otimes H \rightarrow \ell^2(U_{k+1,ij}\cap Z_Y)\otimes H$ with infinite dimensional cokernel. Let $V_{k+1,i}=\bigoplus^n_{j=1}V_{k+1,ij}: \ell^2(g^{-1}(U_{k,i})\cap Z_X)\otimes H \rightarrow \ell^2(U_{k,i}\cap Z_Y)\otimes H$ which is an isometry with infinite dimensional cokernel. Let $W_{k, i}$ be a unitary from the cokernel of $V_{k,i}$ to the cokernel of $V_{k+1,i}$ and let $W'_{k,i}=V_{k+1,i}V^{\ast}_{k,i}+0\oplus W_{k,i}$, which is a unitary on $\ell^2(U_{k,i}\cap Z_Y)$. Let $h: S^1\rightarrow [0,1)$ be the Borel inverse function of the continuous function $t'\mapsto e^{2\pi i t'}$. Define $\gamma_{k,i}$ to be a path of isometries by $\gamma_{k,i}(t)=e^{2\pi ith(W'_{k, i})}V_{k,i}$. Then $\gamma_{k,i}$ is a $2\pi$-Lipschitz path connecting $V_{k, i}$ with $V_{k+1,i}$ and $\supp(\gamma_{k,i}(t))\subseteq g^{-1}(U_{k,i}) \times U_{k,i}$. Define 
$$\gamma_k(t)=\bigoplus_i \gamma_{k,i}(t): \XH \rightarrow \YH$$
for $t\in [0,1]$. Then for $s\in [0, \infty)$, define
$$V_g(s)=\gamma_k(s-k) \otimes I_{H_A}: \ell^2(Z_X)\otimes H \otimes H_A \rightarrow \ell^2(Z_Y)\otimes H \otimes H_A,\:\:\textrm{if $s\in [k,k+1]$}.$$
Then $V_g$ is a bounded and uniformly continuous function satisfying
\begin{equation}\label{**}
	\supp(V_g(s))\subseteq \{(x,y)\in X\times Y: d(g(x), y)<\delta_k\},\:\:\textrm{if $s\in[k,k+1]$}.
\end{equation}
Thus by Lemma \ref{covering-isometry}, we have the following lemma.

\begin{lemma}\label{continuous-covering-isometry}
	Let $g: X\rightarrow Y$ be a uniformly continuous coarse map and $\{\delta_k\}_{k\in \N}$ be a decreasing sequence of positive numbers with limit zero. Then $V_g$ as defined above induces a $\ast$-homomorphism 
	$$Ad_g: C^{\ast}_{L, f}(X, A) \rightarrow C^{\ast}_{L, f}(Y, A)$$
	defined by 
	$$Ad_g(u)(t)=V_g(t)u(t)V^{\ast}_g(t),$$
	that satisfies 
	$$\prop(Ad_g(u)(t))\leq w_g(\prop(u(t)))+2\delta_k\:\:\textrm{for $t\in [k,k+1]$}.$$
	Moreover, for another bounded and uniformly continuous function $V'_g$ that maps $[0,\infty)$ to the set of all isometries on $\ell^2(Z_X)\otimes H \otimes H_A$ and satisfies the condition \eqref{**}, then $Ad'_g$ defined by $V'_g$ induces the same homomorphism as $Ad_g$ on $K$-theory.
\end{lemma}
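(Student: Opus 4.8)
The plan is to deduce the statement from its ``static'' counterpart, Lemma~\ref{covering-isometry}, by evaluating at each time $t$, and then to verify the three features that are special to the localization algebra: that $Ad_g(u)$ is a bounded, uniformly continuous function, that its propagation tends to zero, and that the induced map on $K$-theory does not depend on the choices made in the construction of $V_g$.

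First I would record that $\{V_g(t)\}_{t\geq 0}$ is a $2\pi$-Lipschitz, hence uniformly continuous, path of isometries: each $\gamma_k$ is a direct sum of the $2\pi$-Lipschitz paths $\gamma_{k,i}$, and the pieces agree at the integer endpoints by construction, so $V_g$ is $2\pi$-Lipschitz on all of $[0,\infty)$. Fix $t\in[k,k+1]$. By \eqref{**} the isometry $V_g(t)$ satisfies $\supp(V_g(t))\subseteq\{(x,y)\in X\times Y:\ d(g(x),y)<\delta_k\}$, so Lemma~\ref{covering-isometry} applied with $\delta=\delta_k$ tells us that $T\mapsto V_g(t)TV_g^{\ast}(t)$ is a $\ast$-homomorphism from $C^{\ast}_f(X,A)$ to $C^{\ast}_f(Y,A)$ with $\prop(V_g(t)TV_g^{\ast}(t))\leq w_g(\prop(T))+2\delta_k$. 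Consequently, for $u$ in the defining dense $\ast$-subalgebra of $C^{\ast}_{L,f}(X,A)$, the function $Ad_g(u)(t)=V_g(t)u(t)V_g^{\ast}(t)$ takes values in $C^{\ast}_f(Y,A)$ and obeys the stated propagation bound on each $[k,k+1]$. Uniform continuity of $Ad_g(u)$ follows from
\[
\big\|V_g(t)u(t)V_g^{\ast}(t)-V_g(s)u(s)V_g^{\ast}(s)\big\|\leq \|V_g(t)-V_g(s)\|\,\|u(t)\|+\|u(t)-u(s)\|+\|u(s)\|\,\|V_g(t)-V_g(s)\|
\]
together with uniform continuity of $V_g$ and of $u$ and boundedness of $u$; and $\|Ad_g(u)\|\leq\|u\|$ since $T\mapsto V_g(t)TV_g^{\ast}(t)$ is contractive. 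To see that $Ad_g(u)\in C^{\ast}_{L,f}(Y,A)$ it remains to check $\prop(Ad_g(u)(t))\to 0$ as $t\to\infty$: for $t\in[k,k+1]$ we have $\delta_k\to 0$ and $\prop(u(t))\to 0$, and because $g$ is \emph{uniformly continuous} we have $w_g(R)\to 0$ as $R\to 0^{+}$, so $w_g(\prop(u(t)))\to 0$. Finally $Ad_g$ is multiplicative because $V_g^{\ast}(t)V_g(t)=I$, hence $V_g(t)u(t)v(t)V_g^{\ast}(t)=\big(V_g(t)u(t)V_g^{\ast}(t)\big)\big(V_g(t)v(t)V_g^{\ast}(t)\big)$, and it is evidently linear and $\ast$-preserving; being contractive, it extends from the dense $\ast$-subalgebra to a $\ast$-homomorphism $C^{\ast}_{L,f}(X,A)\to C^{\ast}_{L,f}(Y,A)$.

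For the last assertion, let $V'_g$ be another bounded, uniformly continuous path of isometries satisfying \eqref{**}. As in the final part of the proof of Lemma~\ref{covering-isometry}, for each $t$ the operators $V_g(t)V_g^{\ast}(t)$, $V'_g(t)(V'_g(t))^{\ast}$ and $V'_g(t)V_g^{\ast}(t)$, together with their adjoints, are multipliers of $C^{\ast}_f(Y,A)$; since as functions of $t$ they are $2\pi$-Lipschitz with propagation at most $2\delta_k$ on $[k,k+1]$, multiplying them on either side by an element of $C^{\ast}_{L,f}(Y,A)$ stays inside $C^{\ast}_{L,f}(Y,A)$, so they are multipliers of $C^{\ast}_{L,f}(Y,A)$. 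I would then form the self-adjoint unitary multiplier of $M_2(C^{\ast}_{L,f}(Y,A))$
\[
W(t)=\begin{pmatrix} I-V_g(t)V_g^{\ast}(t) & V_g(t)(V'_g(t))^{\ast}\\ V'_g(t)V_g^{\ast}(t) & I-V'_g(t)(V'_g(t))^{\ast}\end{pmatrix},
\]
and check, using $V_g^{\ast}(t)V_g(t)=(V'_g(t))^{\ast}V'_g(t)=I$, that $W(t)^2=I$ and
\[
W(t)\begin{pmatrix} Ad_g(u)(t) & 0\\ 0 & 0\end{pmatrix}W(t)^{\ast}=\begin{pmatrix} 0 & 0\\ 0 & Ad'_g(u)(t)\end{pmatrix}.
\]
Applying Lemma~\ref{isometryequivalent} with $v=W$ (here $W^{\ast}W=I$), the $\ast$-homomorphisms $u\mapsto\operatorname{diag}(Ad_g(u),0)$ and $u\mapsto\operatorname{diag}(0,Ad'_g(u))$ induce the same map on $K$-theory; since both corner inclusions $C^{\ast}_{L,f}(Y,A)\to M_2(C^{\ast}_{L,f}(Y,A))$ induce the same $K$-theory isomorphism, it follows that $Ad_g$ and $Ad'_g$ induce the same homomorphism on $K$-theory.

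I expect the only genuinely delicate point to be the vanishing of $\prop(Ad_g(u)(t))$ as $t\to\infty$: this is exactly where the hypothesis that $g$ is uniformly continuous (which is not imposed in Lemma~\ref{covering-isometry}) is used, through $w_g(R)\to 0$ as $R\to 0^{+}$. Everything else is a routine pointwise application of Lemma~\ref{covering-isometry} together with the standard $2\times 2$-matrix argument and Lemma~\ref{isometryequivalent}.
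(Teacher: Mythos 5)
Your proposal is correct and follows exactly the route the paper intends: the paper states this lemma without proof, deriving it from the construction of $V_g$ together with a pointwise application of Lemma~\ref{covering-isometry}, and your write-up simply supplies the routine verifications (uniform continuity, vanishing propagation via uniform continuity of $g$, and the $2\times 2$ unitary conjugation plus Lemma~\ref{isometryequivalent} for independence of the choice of $V'_g$). The only microscopic inaccuracy is calling $V_g(t)V_g'^{\ast}(t)$ a $2\pi$-Lipschitz function of $t$ --- $V'_g$ is only assumed uniformly continuous --- but uniform continuity is all your argument actually needs there.
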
 

If replacing the Hilbert space $H$ by $H^{\infty}=\oplus_{i\geq 0} H$ in the definition of the localization algebra with filtered coefficients, then we obtain a $C^{\ast}$-algebra, denoted by $C^{\ast}_{L, f}(X, A, H^{\infty})$. Thus by Lemma \ref{isometryequivalent}, we have the following lemma.

\begin{lemma}\label{Lem-dep-Hilspace}
	Define $V: H\rightarrow H^{\infty}$ by $V(\xi)=(\xi, 0,\cdots,0)$, then the map from $C^{\ast}_{L,f}(X, A)$ to $C^{\ast}_{L, f}(X, A, H^{\infty})$ defined by $u(t)\mapsto (I\otimes V \otimes I)u(t)(I\otimes V^{\ast} \otimes I)$ induces an isomorphism on the $K$-theory.	 
\end{lemma}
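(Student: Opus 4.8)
The plan is to exhibit the map
$\phi\colon u\mapsto \bigl(I\otimes V\otimes I\bigr)\,u(\cdot)\,\bigl(I\otimes V^{\ast}\otimes I\bigr)$
as the composition of an honest $\ast$-isomorphism $C^{\ast}_{L,f}(X,A)\xrightarrow{\cong}C^{\ast}_{L,f}(X,A,H^{\infty})$ with conjugation by an isometry in a multiplier algebra, and then quote Lemma \ref{isometryequivalent}.

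First I would check that $\phi$ is a well-defined $\ast$-homomorphism. Since $V^{\ast}V=I_{H}$, conjugation by $I\otimes V\otimes I$ is multiplicative and $\ast$-preserving; since $V$ acts only on the middle tensor factor, $I\otimes V\otimes I$ commutes with every $\chi_{U}\otimes I\otimes I$, so $\prop$ is not increased, and because $\K(\ell^{2}(Z_{X})\otimes H)\otimes A_{s}$ is carried into $\K(\ell^{2}(Z_{X})\otimes H^{\infty})\otimes A_{s}$ under left/right multiplication by $I\otimes V\otimes I$ (resp.\ $I\otimes V^{\ast}\otimes I$), the local $\K\otimes A_{s}$-condition is preserved. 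Boundedness, uniform continuity and $\prop(\,\cdot\,)\to 0$ are evidently preserved, so $\phi(u)\in C^{\ast}_{L,f}(X,A,H^{\infty})$.

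Next, fix a unitary $\Psi\colon H\to H^{\infty}$ (both are separable and infinite dimensional) and let $\psi(u)(t)=(I\otimes\Psi\otimes I)\,u(t)\,(I\otimes\Psi^{\ast}\otimes I)$. By exactly the same observations, and because $\Psi$ is unitary, $\psi\colon C^{\ast}_{L,f}(X,A)\to C^{\ast}_{L,f}(X,A,H^{\infty})$ is a $\ast$-isomorphism; in particular $\psi_{\ast}$ is an isomorphism on $K$-theory. Now set $v=I\otimes (V\Psi^{\ast})\otimes I$, a constant function which, by the same propagation-zero argument, lies in the multiplier algebra of $D:=C^{\ast}_{L,f}(X,A,H^{\infty})$. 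Then $v^{\ast}v=I\otimes \Psi V^{\ast}V\Psi^{\ast}\otimes I=I\otimes\Psi\Psi^{\ast}\otimes I=I$, so $v$ is an isometry, and using $\Psi^{\ast}\Psi=I_{H}$ and $V^{\ast}V=I_{H}$ a direct computation gives $v\,\psi(u)\,v^{\ast}=(I\otimes V\otimes I)\,u(\cdot)\,(I\otimes V^{\ast}\otimes I)=\phi(u)$. Thus, with $C=C^{\ast}_{L,f}(X,A)$, $\alpha=\psi$, $\beta=\phi$, the hypotheses of Lemma \ref{isometryequivalent} hold (note $\psi(u)v^{\ast}v=\psi(u)$ since $v^{\ast}v=I$), so $\phi$ and $\psi$ induce the same map on $K$-theory; since $\psi_{\ast}$ is an isomorphism, so is $\phi_{\ast}$.

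I expect the only genuine work to be the bookkeeping in the second and third paragraphs — verifying that conjugation and multiplication by operators supported on the middle Hilbert-space factor keep us inside the relevant localization algebras and their multiplier algebras — which is routine from the definitions; the whole conceptual content is the identity $\phi=\mathrm{Ad}_{v}\circ\psi$ together with Lemma \ref{isometryequivalent}.
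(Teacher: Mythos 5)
Your proposal is correct and follows essentially the same route as the paper, which proves this lemma simply by citing Lemma \ref{isometryequivalent}; you have filled in the intended details (the auxiliary unitary $\Psi\colon H\to H^{\infty}$, the isometric multiplier $v=I\otimes V\Psi^{\ast}\otimes I$, and the identity $\phi=\mathrm{Ad}_{v}\circ\psi$) correctly.
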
 

Next, we will show some homological properties of the $K$-theory of the localization algebras with filtered coefficients.

\begin{definition}\cite[Definition 3.5]{Yu-Localizationalg}\label{Def-strLip}
	Let $X$, $Y$ be two proper metric spaces and $g$, $h$ be two Lipschitz maps from $X$ to $Y$. A continuous homotopy $F(t, x)$ $(t\in [0,1])$ between $g$ and $h$ is said to be \textit{strongly Lipschitz} if
	\begin{enumerate}
		\item $F(t, x)$ is a coarse map from $X$ to $Y$ for each $t\in [0,1]$;
		\item $d(F(t, x), F(t, y))\leq Cd(x, y)$ for all $x, y\in X$ and $t\in [0,1]$, where $C$ is a constant (called Lipschitz constant of $F$);
		\item for any $\varepsilon>0$, there exists $\delta>0$ such that $d(F(t_1, x), F(t_2, x))<\varepsilon$ for all $x\in X$ if $\|t_1-t_2\|<\delta$;
		\item $F(0,x)=g(x)$ and $F(1, x)=h(x)$ for all $x\in X$.
	\end{enumerate} 
\end{definition}

\begin{definition}\cite[Definition 3.6]{Yu-Localizationalg}\label{Def-strLip-equi}
	$X$ is said to be \textit{strongly Lipschitz homotopy equivalent} to $Y$ if there exist two Lipschitz maps $g: X\rightarrow Y$ and $h:Y\rightarrow X$ such that $hg$ and $gh$ are strongly Lipschitz homotopic to $id_{X}$ and $id_{Y}$, respectively. 
\end{definition}

The $K$-theory of localization algebras with filtered coefficients is computable because of the following two lemmas, the proofs are coming from Yu's paper \cite{Yu-Localizationalg}.

\begin{lemma}\label{Lip-homotopy-invaraint}
	If $X$ is strongly Lipschitz homotopy equivalent to $Y$, then $K_{\ast}(C^{\ast}_{L, f}(X, A))$ is naturally isomorphic to $K_{\ast}(C^{\ast}_{L, f}(Y, A))$.
\end{lemma}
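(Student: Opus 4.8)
The plan is to follow Yu's original argument for localization algebras (\cite{Yu-Localizationalg}) and check that it goes through with filtered coefficients in $A$, the only new point being that all the covering isometries involved must be taken with \emph{uniformly bounded propagation along the $Y$-direction}, which is automatic from the construction, while the filtration bound $A_s$ is preserved because the covering isometries act trivially on the $H_A$-factor. First I would reduce to showing that for a strongly Lipschitz homotopy $F(t,x)$ between Lipschitz maps $g,h\colon X\to Y$, the induced maps $Ad_g$ and $Ad_h$ on $K_\ast(C^\ast_{L,f}(X,A))\to K_\ast(C^\ast_{L,f}(Y,A))$ of Lemma \ref{continuous-covering-isometry} coincide. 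Granting this, if $X$ is strongly Lipschitz homotopy equivalent to $Y$ via $g,h$, then $Ad_{hg}=Ad_{h}\circ Ad_{g}$ agrees with $Ad_{\mathrm{id}_X}$, which is the identity on $K$-theory (again by Lemma \ref{continuous-covering-isometry}, since for the identity map one may choose $V_{\mathrm{id}}(s)$ to be a path of unitaries), and symmetrically $Ad_g\circ Ad_h = \mathrm{id}$ on $K$-theory; hence $Ad_g$ is an isomorphism with inverse $Ad_h$. Naturality in $A$ is clear since the construction of $Ad_g$ does not involve the coefficient algebra.

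The core step is therefore the homotopy-invariance statement: $Ad_g$ and $Ad_h$ induce the same map on $K$-theory whenever $g$ and $h$ are strongly Lipschitz homotopic. Here I would mimic Yu's ``$\Delta$-construction'': use the homotopy $F$ to build a single $\ast$-homomorphism $C^\ast_{L,f}(X,A)\to C^\ast_{L,f}(Y,A)$ by running, at localization-parameter $s\in[k,k+1]$, the covering isometry associated to the map $x\mapsto F(s_k, x)$ for a suitable sequence $s_k\to$ (endpoint), and interpolating. More precisely, following \cite[proof of Lemma 3.4]{Yu-Localizationalg}, I would fix a decreasing sequence $\delta_k\downarrow 0$, choose refining Borel covers $\U_k$ of $Y$ as in the paragraph preceding Lemma \ref{continuous-covering-isometry}, and for each $k$ subdivide the interval where the homotopy parameter runs, using conditions (2) and (3) of Definition \ref{Def-strLip} (uniform Lipschitz constant and uniform continuity in $t$) to guarantee that on each subinterval the relevant isometry has support in an $\delta_k$-neighbourhood of the graph of $F(t_0,\cdot)$ for a fixed $t_0$, hence has controlled propagation in the sense of \eqref{**}. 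Patching these together with the $2\pi$-Lipschitz connecting paths (exactly as in the construction of $\gamma_{k,i}$) produces an isometry-valued function $\widetilde V(s)$ on $[0,\infty)$ satisfying the support condition \eqref{**}, and thus, by the ``moreover'' clause of Lemma \ref{continuous-covering-isometry}, the induced homomorphism agrees with $Ad_g$ at parameter $0$ and with $Ad_h$ asymptotically; a further application of Lemma \ref{continuous-covering-isometry} (uniqueness of $Ad$ up to $K$-theory) identifies both with the same $K$-theory map.

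The main obstacle is the bookkeeping of propagation in the $Y$-direction during the interpolation: one must ensure that across the whole construction the propagation of $\widetilde V(s)$ stays finite and in fact tends to zero as $s\to\infty$, so that conjugation by $\widetilde V$ maps $C^\ast_{L,f}(X,A)$ into $C^\ast_{L,f}(Y,A)$ rather than merely into $C^\ast_f(Y,A)$-valued functions. This is handled exactly as in Yu's argument: the Lipschitz constant $C$ of $F$ gives $\prop(Ad(u)(s))\le C\cdot\prop(u(s))+2\delta_k$ for $s\in[k,k+1]$ by the propagation estimate in Lemma \ref{continuous-covering-isometry}, and since $\prop(u(s))\to 0$ and $\delta_k\to 0$ the right-hand side tends to $0$. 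The verification that the local compactness / filtration condition (the $A_s$-membership of $(\chi_K\otimes I\otimes I)\widetilde V(s)u(s)\widetilde V(s)^\ast$) is preserved is routine, since $\widetilde V(s)$ is a covering isometry of the form $(\text{isometry on }\ell^2(Z)\otimes H)\otimes I_{H_A}$ and so commutes with the $A$-factor; this is the same computation as in the proof of Lemma \ref{covering-isometry}. Everything else is a direct transcription of \cite{Yu-Localizationalg}.
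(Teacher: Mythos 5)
Your reduction (to showing that strongly Lipschitz homotopic maps $g,h\colon X\to Y$ induce the same morphism on $K$-theory) and your bookkeeping of propagation and of the filtration are fine and match the paper's setup. The gap is in the core step. The single interpolating isometry-valued function $\widetilde V(s)$ you build by letting the homotopy parameter drift to its endpoint as $s\to\infty$ satisfies the support condition \eqref{**} only for the \emph{asymptotic} map $h$ (with a modified sequence $\delta_k'\to 0$ supplied by condition (3) of Definition \ref{Def-strLip}); it does \emph{not} satisfy \eqref{**} for $g$, because for large $s$ its support lies near the graph of $h$, at distance roughly $\sup_x d(g(x),h(x))$ from the graph of $g$, and this quantity does not tend to zero. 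Consequently the ``moreover'' clause of Lemma \ref{continuous-covering-isometry} identifies the induced homomorphism with $Ad_h$ but says nothing about $Ad_g$: the $2\times 2$ rotation trick behind that clause requires $\widetilde V(s)V_g^\ast(s)$ to be a multiplier of $C^\ast_{L,f}(Y,A)$, i.e.\ to have propagation tending to $0$ as $s\to\infty$, and it does not. The observation that $\widetilde V(0)=V_g(0)$ only shows the two homomorphisms agree after composing with evaluation at zero, and evaluation at zero is not injective on the $K$-theory of localization algebras (if it were, half of Conjecture \ref{CBC} would be automatic), so no $K$-theoretic identification with $Ad_g$ follows. In short, the ``slide the homotopy parameter along the localization parameter'' construction only re-derives that $\widetilde V$ and $V_h$ induce the same map; its relation to $Ad_g$ is precisely what remains to be proved.

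The paper closes this gap the way Yu does, with an Eilenberg swindle rather than a single interpolation. One discretizes the homotopy by a doubly indexed family $t_{i,j}$ with $t_{0,j}=0$ and $t_{i,j}=1$ for $i$ large, passes to $C^\ast_{L,f}(X,A,H^\infty)$ via Lemma \ref{Lem-dep-Hilspace}, forms the infinite direct sum $a_1=\bigoplus_{i\ge 0}\bigl(V_i(t)((u(t)-I)\oplus 0)V_i^\ast(t)+I\bigr)\cdot W^\ast$ over the homotopy index $i$, compares it with its index shift $a_3$ using the closeness of adjacent $V_i$'s and Lemma \ref{isometryequivalent}, and peels off the $i=0$ summand to conclude $[Ad_{hg}(u)]=[u]$. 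Some such sum-and-shift mechanism is needed here; your proposal omits it, and the step where you invoke the uniqueness clause to tie $\widetilde V$ to \emph{both} endpoints is where the argument fails.
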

\begin{proof}
	By the assumption, there exists two Lipschitz maps $g: X\rightarrow Y$ and $h:Y\rightarrow X$ such that $hg$ and $gh$ are strongly Lipschitz homotopic to $id_X$ and $id_Y$, respectively. It is sufficient to prove that $Ad_{hg}$ induces the identity map on $K_{\ast}(C^{\ast}_{L,f}(X, A))$ and similar to $Ad_{gh}$. Let $F(t,x)$ be a strongly Lipschitz homotopy with the Lipschitz constant $C$ such that $F(0,\cdot)=hg$ and $F(1,\cdot)=id_X$. Then we can choose a sequence of non-negative numbers $\{t_{i,j}\}_{i,j\geq 0}$ in $[0,1]$ such that
	\begin{itemize}
		\item $t_{0, j}=0$ and $t_{i, j}\leq t_{i+1,j}$;
		\item for each $j$, there exists $N_j\geq 0$ such that $t_{i,j}=1$ for all $i\geq N_j$;
		\item $d(F(t_{i,j}, x), F(t_{i+1,j}, x))\leq 1/(j+1)$ and $d(F(t_{i,j}, x), F(t_{i,j+1}, x))\leq 1/(j+1)$ for all $x\in X$.
	\end{itemize}
	By the construction before Lemma \ref{continuous-covering-isometry}, there exists a sequence of multipliers $V_{F(t_{i,j},\cdot)}$ of $C^{\ast}_{L,f}(X,A)$ such that
	$$\supp(V_{F(t_{i,j},\cdot)}(t))\subseteq \{(x,x')\in X\times X: d(F(t_{i,j},x), x')<1/(1+i+j)\}$$
	for any $i,j\geq 0$. 
	Define
	$$V_i(t)=R(t-j) (V_{F(t_{i,j},\cdot)}(t)\oplus V_{F(t_{i,j+1},\cdot)}(t)) R^{\ast}(t-j),\: t\in [j,j+1],$$
	where 
	$$R(t)=
	\begin{pmatrix}
		\cos(\pi t/2) & \sin(\pi t/2)\\
		-\sin(\pi t/2) & \cos(\pi t/2)
	\end{pmatrix}.$$
	Now we prove the lemma for $\ast=1$ and by a suspension argument, we can similarly prove it for $\ast=0$. For an element $[u]\in K_{1}(C^{\ast}_{L, f}(X, A))$, we assume $u\in C^{\ast}_{L, f}(X, A)^{+}$. Let $W=u\oplus I$. Define the following three unitaries:
	\begin{equation*}
		\begin{split}
			&a_1=\bigoplus_{i\geq 0} (V_i(t)((u(t)-I)\oplus 0)V^{\ast}_i(t)+I)\cdot W^{\ast};\\
			&a_2=\bigoplus_{i\geq 0} (V_{i+1}(t)((u(t)-I)\oplus 0)V^{\ast}_{i+1}(t)+I)\cdot W^{\ast};\\
			&a_3=I\bigoplus_{i\geq 1} (V_i(t)((u(t)-I)\oplus 0)V^{\ast}_i(t)+I)\cdot W^{\ast}.
		\end{split}
	\end{equation*}
	Since $\prop(a_i)\leq (C+1) \prop(u(t))+1/(j+1)$ for $t\in [j,j+1]$ and $V_i(t)$ acts on $H_A$ as the identity operator, thus $a_i\in M_2(C^{\ast}_{L, f}(X, A, H^{\infty}))^{+}$. Moreover, since $d(F(t_{i,j}, x), F(t_{i+1,j}, x))\leq 1/(j+1)$, thus $[a_1]=[a_2]$ in $K_{1}(C^{\ast}_{L, f}(X, A))$ (the proof is similar to the last section of the proof of Lemma \ref{covering-isometry}). Let 
	$$V: H^{\infty}\rightarrow H^{\infty}, \: (h_1, h_2, \cdots)\mapsto (0, h_1, \cdots),$$ 
	then $I_{\ell^2(Z_X)}\otimes V \otimes I_{H_A}$ is an isometric multiplier of $C^{\ast}_{L,f}(X, A, H^{\infty})$. And 
	$$a_3=(I_{\ell^2(Z_X)}\otimes V \otimes I_{H_A})(a_2-I)(I_{\ell^2(Z_X)}\otimes V^{\ast} \otimes I_{H_A})+I,$$ 
	thus $[a_2]=[a_3]$ in $K_{1}(C^{\ast}_{L, f}(X, A))$ by Lemma \ref{isometryequivalent}. Then, we have that $$[a_1a^{-1}_3]=[(V_i(t)((u(t)-I)\oplus 0)V^{\ast}_i(t)+I)\cdot W^{\ast}\bigoplus_{i\geq 1}I]=[Ad_{hg}(u)W^{\ast}\bigoplus_{i\geq 1}I]$$
	which is equal to $[I]$. Thus $Ad_{hg, \ast}=id$ on $K_{1}(C^{\ast}_{L, f}(X, A))$ by Lemma \ref{Lem-dep-Hilspace}.
\end{proof}

\begin{definition}
	A cover $\{X_1, X_2\}$ of $X$ is called to be \textit{uniformly excisive}, if for any $\varepsilon_t\geq 0$ with $\lim_{t\rightarrow \infty} \varepsilon_t=0$, there exist $\delta_t\geq 0$ with $\lim_{t\rightarrow \infty} \delta_t=0$ such that
	$$B_{\varepsilon_t}(X_1)\cap B_{\varepsilon_t}(X_2)\subseteq B_{\delta_t}(X_1\cap X_2)$$
	for any $t\geq 0$, where $B_{\varepsilon_t}(X_i)$ is the $\varepsilon_t$-neighborhood of $X_i$. 
\end{definition}

\begin{lemma}\label{MV-fil-localg}
	Let $X=X_1\cup X_2$ be a uniformly excisive cover. If there exists $\varepsilon_0>0$ such that the $\varepsilon_0$-neighborhoods of $X_1, X_2$ and $X_1\cap X_2$ are strongly Lipschitz homotopy equivalent to $X_1, X_2$ and $X_1\cap X_2$, respectively. Then there is a Mayer-Vietoris six-term exact sequence
	$$\small\xymatrix{
		K_0(C^{\ast}_{L, f}(Y, A)) \ar[r] & 
		K_0(C^{\ast}_{L, f}(X_1, A))\oplus K_0(C^{\ast}_{L, f}(X_2, A)) \ar[r] & 
		K_0(C^{\ast}_{L, f}(X, A))\ar[d] \\
		K_1(C^{\ast}_{L, f}(X, A))\ar[u] &
		K_1(C^{\ast}_{L, f}(X_1, A))\oplus K_1(C^{\ast}_{L, f}(X_2, A))\ar[l] &
		K_1(C^{\ast}_{L, f}(Y, A)),\ar[l] 
	}$$
	where $Y=X_1\cap X_2$.
\end{lemma}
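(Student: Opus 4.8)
The plan is to realise the asserted sequence as the abstract Mayer--Vietoris sequence of Lemma~\ref{Lem-MV-sequence} for a suitable pair of ideals of $C^{\ast}_{L,f}(X,A)$. For a closed subset $Z\subseteq X$, let $I_Z$ be the norm closure of the set of those generating functions $u$ of $C^{\ast}_{L,f}(X,A)$ (bounded, uniformly continuous, with $\prop(u(t))\to 0$) for which there is a function $c\colon[0,\infty)\to[0,\infty)$ with $c(t)\to 0$ and $\supp(u(t))\subseteq B_{c(t)}(Z)\times B_{c(t)}(Z)$ for every $t$. Since $\prop(v(t))\to 0$ for each $v\in C^{\ast}_{L,f}(X,A)$, multiplying such a $u$ on either side by $v$ only enlarges the support parameter by $\prop(v(t))\to 0$; hence each $I_Z$ is a closed two-sided ideal of $C^{\ast}_{L,f}(X,A)$. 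I would work with $I_{X_1}$, $I_{X_2}$ and $I_Y$, where $Y=X_1\cap X_2$.

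Two verifications are then needed. First, $I_{X_1}+I_{X_2}$ is dense: for a generating function $u$, put $u_1(t)=(\chi_{X_1}\otimes I\otimes I)u(t)$ and $u_2(t)=((1-\chi_{X_1})\otimes I\otimes I)u(t)$; since $\chi_{X_1}\otimes I\otimes I$ is a propagation-zero multiplier of $C^{\ast}_f(X,A)$, both $u_i$ lie in $C^{\ast}_{L,f}(X,A)$, and the first coordinate of $\supp(u_i(t))$ lies in $X_1$, respectively in $\overline{X\setminus X_1}\subseteq X_2$, while the second is within $\prop(u(t))$ of the first, so $u_i\in I_{X_i}$ with support parameter $\prop(u(t))$; as $u=u_1+u_2$ and the generating functions form a dense $\ast$-subalgebra, density follows. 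Second, $I_{X_1}\cap I_{X_2}=I_Y$: the inclusion $\supseteq$ is clear from $Y\subseteq X_i$, and for $\subseteq$ I would use the $C^{\ast}$-algebraic identity $I_{X_1}\cap I_{X_2}=\overline{I_{X_1}I_{X_2}}$ for closed two-sided ideals, reducing to showing $ab\in I_Y$ for $a,b$ generating functions of $I_{X_1},I_{X_2}$; if $(x,y)\in\supp(a(t)b(t))$ there is $z$ with $(x,z)\in\supp(a(t))$ and $(z,y)\in\supp(b(t))$, so $z\in B_{c(t)}(X_1)\cap B_{c(t)}(X_2)$ for a common $c(t)\to 0$, whence $z\in B_{\delta_t}(Y)$ with $\delta_t\to 0$ by uniform excisiveness, and $d(x,Y)\le\prop(a(t))+\delta_t$, $d(y,Y)\le\prop(b(t))+\delta_t$ both tend to $0$. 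This is the only step where uniform excisiveness enters.

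Applying Lemma~\ref{Lem-MV-sequence} to the triple $(I_{X_1},I_{X_2},C^{\ast}_{L,f}(X,A))$ now produces a six-term exact sequence involving $K_{\ast}(I_Y)$, $K_{\ast}(I_{X_1})\oplus K_{\ast}(I_{X_2})$ and $K_{\ast}(C^{\ast}_{L,f}(X,A))$. To obtain the stated sequence it remains to show that for each $Z\in\{X_1,X_2,Y\}$ the map induced by the covering isometry of the inclusion $\iota_Z\colon Z\hookrightarrow X$ is a $K$-isomorphism $C^{\ast}_{L,f}(Z,A)\to I_Z$: this isometry has support in $\{(z,x):d(z,x)<\delta_k\}$ by \eqref{**}, so by Lemma~\ref{continuous-covering-isometry} it does yield a $\ast$-homomorphism $Ad_{\iota_Z}$ and, by the support estimate, $Ad_{\iota_Z}$ lands in $I_Z$; granting that it is a $K$-isomorphism, the naturality of Lemma~\ref{Lem-MV-sequence} (together with the corollary on independence of the choice of dense subset) converts the sequence into the asserted one with $C^{\ast}_{L,f}(Z,A)$ in place of $I_Z$.

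I expect the identification $K_{\ast}(I_Z)\cong K_{\ast}(C^{\ast}_{L,f}(Z,A))$ via $Ad_{\iota_Z}$ to be the main obstacle, since $I_Z$ is defined by a shrinking-support condition that matches neither $C^{\ast}_{L,f}(Z,A)$ nor the localization algebra of a fixed neighbourhood on the nose. The bridge, following Yu's original computation, is that the $K$-theory of a localization algebra depends only on the behaviour near $t=\infty$: a time translation $u\mapsto u(\,\cdot\,+s)$ is homotopic to the identity and, for $s$ large, forces the support of a given element of $I_Z$ into the fixed neighbourhood $B_{\varepsilon_0}(Z)$, which lets one compare $K_{\ast}(I_Z)$ with $K_{\ast}(C^{\ast}_{L,f}(B_{\varepsilon_0}(Z),A))$ (after arranging $Z_X$ to contain dense subsets of $X_1$, $X_2$ and $Y$, so the non-canonical isomorphisms of Roe algebras are available). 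The hypothesis that $B_{\varepsilon_0}(Z)$ is strongly Lipschitz homotopy equivalent to $Z$, combined with Lemma~\ref{Lip-homotopy-invaraint}, then gives $K_{\ast}(C^{\ast}_{L,f}(B_{\varepsilon_0}(Z),A))\cong K_{\ast}(C^{\ast}_{L,f}(Z,A))$, and a diagram chase identifies the composite with the map induced by $Ad_{\iota_Z}$, completing the argument.
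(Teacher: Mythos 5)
Your proposal follows essentially the same route as the paper: the same ideals $I_Z=C^{\ast}_{L,f}(X,A;Z)$ defined by shrinking supports, the abstract Mayer--Vietoris sequence of Lemma~\ref{Lem-MV-sequence} applied to $(I_{X_1},I_{X_2},C^{\ast}_{L,f}(X,A))$, and the identification $K_{\ast}(I_Z)\cong K_{\ast}(C^{\ast}_{L,f}(Z,A))$ via the time-translation homotopy into $B_{\varepsilon_0}(Z)$ combined with Lemma~\ref{Lip-homotopy-invaraint}. Your write-up is in fact somewhat more explicit than the paper's on the density of $I_{X_1}+I_{X_2}$ and on where uniform excisiveness enters the verification of $I_{X_1}\cap I_{X_2}=I_Y$, both of which the paper merely asserts.
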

\begin{proof}
	For $i=1,2$, let $C^{\ast}_{L,f}(X, A; X_i)$ be a closed subalgebra of $C^{\ast}_{L,f}(X, A)$ generated by all elements $u$ satisfying that there exists $c_t\geq 0$ with $\lim_{t\rightarrow \infty}c_t=0$ such that 
	$$\supp(u(t))\subseteq \{(x,x')\in X\times X: d((x,x'), X_i\times X_i)\leq c_t\}$$
	for all $t\in [0,\infty)$. Then there exists a natural inclusion map 
	$$\tau: C^{\ast}_{L,f}(X_i, A)\rightarrow C^{\ast}_{L,f}(X, A; X_i).$$
	Now we prove that $\tau$ induces an isomorphism on the $K$-theory. By the assumption, there exists $\varepsilon_0>0$ such that the $\varepsilon_0$-neighborhood $B_{\varepsilon_0}(X_i)$ of $X_i$ is strongly Lipschitz homotopy equivalent to $X_i$. For any element $u\in C^{\ast}_{L,f}(X, A; X_i)$, there exists $t_0>0$ such that 
	$$\supp(u(t+t_0))\subseteq B_{\varepsilon_0}(X_i)\times B_{\varepsilon_0}(X_i)$$
	for all $t\geq 0$. Let $v(t)=u(t+t_0)$, then $u$ and $v$ are homotopic equivalent in $C^{\ast}_{L,f}(X, A; X_i)$ since $u$ is uniformly continuous. Thus the inclusion map from $C^{\ast}_{L,f}(B_{\varepsilon_0}(X_i), A)$ to $C^{\ast}_{L,f}(X, A; X_i)$ induces an isomorphism on the $K$-theory. Then $\tau_{\ast}: K_{\ast}(C^{\ast}_{L,f}(X_i, A))\rightarrow K_{\ast}(C^{\ast}_{L,f}(X, A; X_i))$ is an isomorphism by Lemma \ref{Lip-homotopy-invaraint}.
	
	On the other hand, $C^{\ast}_{L, f}(X, A; X_i)$ is an ideal of $C^{\ast}_{L, f}(X, A)$ for $i=1,2$. Moreover, we have $C^{\ast}_{L, f}(X, A; X_1)+C^{\ast}_{L, f}(X, A; X_2)=C^{\ast}_{L, f}(X, A)$ and $C^{\ast}_{L, f}(X, A; X_1) \cap C^{\ast}_{L, f}(X, A; X_2)=C^{\ast}_{L, f}(X, A; X_1\cap X_2)$. Thus we completed the proof by Lemma \ref{Lem-MV-sequence}.  
\end{proof}

\subsection{The coarse Baum-Connes conjecture with filtered coefficients}
Before building the coarse Baum-Connes conjecture with filtered coefficients, we need introduce a class of coarsening spaces for metric spaces by Rips
complexes.

Let $(N, d_N)$ be a locally finite metric space, namely, any ball in $N$ just contains a finite number of elements. For any $k>0$, the \textit{Rips complex} of $N$ at scale $k$, denoted by $P_k(N)$, is a simplicial complex whose set of vertices is $N$ and a subset $\{z_0, \cdots, z_n\}\subseteq N$ spans an $n$-simplex in $P_k(N)$ if and only if $d_N(z_i, z_j)\leq k$ for any $i,j=0, \cdots, n$. 

Firstly, define a metric $d_{S_k}$ on $P_{k}(N)$ to be the path metric whose restriction to each simplex is the standard spherical metric on the unit sphere by mapping $\sum_{i=0}^{n} t_{i}z_{i}$ to 
$$\left(\frac{t_0}{\sqrt{\sum_{i=0}^{n}t^{2}_{i}}}, \cdots, \frac{t_n}{\sqrt{\sum_{i=0}^{n}t^{2}_{i}}}\right),$$
namely, 
$$d_{S_k}\left(\sum_{i=0}^{n} t_{i}z_{i}, \sum_{i=0}^{n} t'_{i}z_{i}\right)=\frac{2}{\pi} \arccos\left(\frac{\sum_{i=0}^{n} t_i t'_i}{\sqrt{(\sum_{i=0}^{n}t^{2}_{i})(\sum_{i=0}^{n}t'^{2}_{i})}}\right),$$
where $t_i, t'_i\in [0,1]$ and $\sum_{i=0}^n t_i=\sum_{i=0}^n t'_i=1$. The metric $d_{S_k}$ between different connected components is defined to be infinity. Unfortunately, the natural include map from $(N, d_N)$ to $(P_k(N), d_{S_k})$ is generally not a coarsely equivalent map. For this reason, we need introduce the following modified metric on $P_k(N)$ coming from \cite[Definition 7.2.8]{WillettYu-Book}. 

Define a metric $d_{P_k}$ on $P_k(N)$ to be
$$d_{P_k}(x, y)=\inf\left\{\sum_{i=0}^{n} d_{S_k}(x_i, y_i)+\sum_{i=0}^{n-1} d_N(y_i, x_{i+1}) \right\},$$
for all $x, y\in P_k(N)$, here the infimum is taken over all sequences of the form $x=x_0, y_0, x_1, y_1,\cdots, x_n, y_n=y$, where $x_1,\cdots, x_n, y_0, \cdots, y_{n-1} \in N$.

Then by Lemma 7.2.9 and Proposition 7.2.11 in \cite{WillettYu-Book}, we have the following lemma.

\begin{lemma}\label{coarse-equi-Rips}
	For any $k\geq 0$, the metric spaces $(N, d_N)$ and $(P_k(N), d_{P_k})$ are coarsely equivalent. More precisely, let $i_k: N\rightarrow P_k(N)$ be the inclusion map and define $j_k: P_k(N)\rightarrow N$ by mapping $\sum_{i=0}^{n}t_iz_i$ to $z_i$ for some $t_i\neq 0$, then $i_k$ and $j_k$ are coarse maps and 
	$$\max\{d_{N}(j_{k}i_{k}(z),z), d_{P_k}(i_{k}j_{k}(x), x)\}\leq 1,$$
	for any $z\in N$, $x\in P_k(N)$.
\end{lemma}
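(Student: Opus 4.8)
The plan is to reduce the statement to the general structure theory of Rips complexes equipped with the modified metric $d_{P_k}$, which is developed in \cite{WillettYu-Book} (Lemma 7.2.9 and Proposition 7.2.11): there it is shown that $d_{P_k}$ is a genuine (extended) metric on $P_k(N)$, that $(P_k(N),d_{P_k})$ inherits bounded geometry from $(N,d_N)$, and that $i_k$ and $j_k$ are mutually inverse coarse equivalences. So the substance of the proof here is (i) to record that $i_k$ and $j_k$ satisfy the two clauses of Definition \ref{Def-coarse-map}, and (ii) to extract the explicit constant $1$ in the displayed estimate; everything properly geometric is borrowed from the cited results.

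For $i_k\colon N\to P_k(N)$, using in the infimum defining $d_{P_k}(z,z')$ the trivial two-term chain $z=x_0=y_0$, $x_1=z'=y_1$ yields $d_{P_k}(i_k(z),i_k(z'))\le d_N(z,z')$, so $i_k$ is bornologous with $w_{i_k}(R)\le R$; properness of $i_k$ follows because a compact (hence $d_{P_k}$-bounded) subset of $P_k(N)$ meets only finitely many vertices, local finiteness of $N$ being the point. For $j_k\colon P_k(N)\to N$, the preimage of a finite subset $F\subseteq N$ is contained in the union of the closed simplices having a vertex in $F$, which is a finite union of compact simplices by local finiteness, hence pre-compact; and the bornologous estimate $d_N(j_k(x),j_k(y))\le \varphi(d_{P_k}(x,y))$ is the one place I would simply invoke \cite[Proposition 7.2.11]{WillettYu-Book}, where a near-optimal chain realizing $d_{P_k}(x,y)$ is analyzed by following it through its simplicial segments (each moving the chosen vertex by at most $k$ in $d_N$) and its $d_N$-jumps (of total length close to $d_{P_k}(x,y)$).

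It remains to check $\max\{d_N(j_ki_k(z),z),\, d_{P_k}(i_kj_k(x),x)\}\le 1$. The first term is $0$: a vertex $z$ is carried by the $0$-simplex $\{z\}$, so $j_k(i_k(z))=z$. For the second, write $x=\sum_{i=0}^n t_iz_i$ in terms of its carrying simplex (all $t_i>0$); then $j_k(x)=z_{i_0}$ for some $i_0$ with $t_{i_0}>0$, and $i_kj_k(x)=z_{i_0}$ lies in that simplex, so using the one-term chain and the formula for $d_{S_k}$,
$$d_{P_k}(i_kj_k(x),x)\ \le\ d_{S_k}(z_{i_0},x)\ =\ \frac{2}{\pi}\arccos\!\left(\frac{t_{i_0}}{\sqrt{\sum_{i=0}^n t_i^2}}\right)\ \le\ \frac{2}{\pi}\cdot\frac{\pi}{2}\ =\ 1,$$
since $0<t_{i_0}/\sqrt{\sum_i t_i^2}\le 1$ forces the argument of $\arccos$ into $[0,1]$.

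The genuine obstacle is entirely inside the two cited results rather than in the bookkeeping above: proving that $d_{P_k}$ is an honest metric (the interplay of the spherical pieces $d_{S_k}$ with the $d_N$-jumps in positivity and the triangle inequality) and that $j_k$ is a controlled map despite being discontinuous and depending on an arbitrary choice of vertex. Since both are established in \cite{WillettYu-Book}, I would cite them directly and contribute only the verification of the coarse-map axioms and the sharp constant computed above.
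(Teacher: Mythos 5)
Your proposal is correct and matches the paper's treatment: the paper gives no proof of this lemma at all, simply deducing it from Lemma 7.2.9 and Proposition 7.2.11 of Willett--Yu, which is exactly where you place the genuine content. Your additional bookkeeping (the one-term chain bound $d_{P_k}(i_k(z),i_k(z'))\le d_N(z,z')$, the identity $j_ki_k=\mathrm{id}_N$, and the estimate $d_{P_k}(i_kj_k(x),x)\le \frac{2}{\pi}\arccos\bigl(t_{i_0}/\sqrt{\sum_i t_i^2}\bigr)\le 1$) is accurate and only makes explicit what the citation already supplies.
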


For $k_1\leq k_2$, the natural inclusion map 
$$i_{k_1k_2}: P_{k_1}(N)\rightarrow P_{k_2}(N)$$
is a contractible coarse map. Then they induce the following homomorphisms by Lemma \ref{covering-isometry} and Lemma \ref{continuous-covering-isometry}:
$$ad_{i_{k_1k_2},\ast}: K_{\ast}(C_{f}^{\ast}(P_{k_1}(N), A))\rightarrow K_{\ast}(C_{f}^{\ast}(P_{k_2}(N), A)),$$
$$Ad_{i_{k_1k_2},\ast}: K_{\ast}(C_{L,f}^{\ast}(P_{k_1}(N), A))\rightarrow K_{\ast}(C_{L,f}^{\ast}(P_{k_2}(N), A)).$$
Thus we obtain two inductive systems of abelian groups: $$\{K_{\ast}(C_{f}^{\ast}(P_{k}(N), A)), ad_{kk', \ast}\} \:\:\text{and}\:\: \{K_{\ast}(C_{L, f}^{\ast}(P_{k}(N), A)), Ad_{kk', \ast}\}.$$

Consider the evaluation at zero map:
$$e: C_{L,f}^{\ast}(P_{k}(N), A)\rightarrow C_{f}^{\ast}(P_{k}(N), A),\: u\mapsto u(0),$$
which induces a homomorphism on $K$-theory level
$$e_{\ast}: K_{\ast}(C_{L,f}^{\ast}(P_{k}(N), A))\rightarrow K_{\ast}(C_{f}^{\ast}(P_{k}(N), A)),$$
that satisfies $ad_{i_{k_1k_2}, \ast}\circ e_{\ast}=e_{\ast} \circ Ad_{i_{k_1k_2}, \ast}$.

\begin{definition}\label{Def-net}
	Let $X$ be a metric space, a \textit{net} in $X$ is a subset $N_{X}$ satisfying that there exists $C\geq 1$ such that
	\begin{enumerate}
		\item $d(z, z')\geq C$ for any $z,z'\in N_{X}$;
		\item for any $x\in X$, there exists $z\in N_{X}$ such that $d(z, x)\leq C$.
	\end{enumerate}
\end{definition} 

\begin{remark}     
	By Zorn's lemma, any proper metric space admits a locally finite net. Moreover, any net $N_X$ is coarsely equivalent to $X$.
\end{remark}

Now, we are ready to introduce the \textit{coarse Baum-Connes conjecture with filtered coefficients} in a filtered $C^{\ast}$-algebra $A$ as follow:
\begin{conjecture}\label{CBC}
	Let $X$ be a proper metric space and $N_{X}$ be a locally finite net in $X$, then the following homomorphism
	$$e_{\ast}: \lim_{k\rightarrow \infty}K_{\ast}(C_{L,f}^{\ast}(P_{k}(N_X), A)) \rightarrow \lim_{k\rightarrow \infty}K_{\ast}(C_{f}^{\ast}(P_{k}(N_X), A))$$
	is an isomorphism between abelian groups. 
\end{conjecture}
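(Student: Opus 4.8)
The plan is to view $e_{\ast}$ as the coarse assembly map and to establish the isomorphism by the same localisation-algebra template that proves the original coarse Baum-Connes conjecture, now carrying the filtration $(A_r)_{r>0}$ through every construction. First I would work entirely with the inductive system of Rips complexes: by Lemma \ref{coarse-equi-Rips} and Corollary \ref{coarse-equi-isomor} the net $N_X$ is coarsely equivalent to each $P_k(N_X)$, and the functoriality of Lemmas \ref{covering-isometry} and \ref{continuous-covering-isometry}, together with the compatibility $ad_{i_{k_1k_2},\ast}\circ e_{\ast}=e_{\ast}\circ Ad_{i_{k_1k_2},\ast}$ recorded before Definition \ref{Def-net}, let me analyse $e_{\ast}$ scale by scale on the two inductive limits.

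Next I would compute the two sides separately and match them. For the localisation (K-homology) side, strong-Lipschitz homotopy invariance (Lemma \ref{Lip-homotopy-invaraint}) together with the Mayer-Vietoris sequence for uniformly excisive covers (Lemma \ref{MV-fil-localg}) lets me break each $P_k(N_X)$ along its simplicial structure, identifying $\lim_{k\to\infty} K_{\ast}(C^{\ast}_{L,f}(P_k(N_X),A))$ with a coarse K-homology of $X$ with coefficients in $A$. On the Roe side I would set up the matching decomposition at each finite scale, cutting $P_k(N_X)$ along a uniformly excisive cover and checking the ideal identities so that Lemma \ref{Lem-MV-sequence} applies; quasi-stability (Lemma \ref{Roe-quasi-stable}) and the continuous-flow lemma (Lemma \ref{continuous-flow-K}) fix the base-case model, where the two sides coincide and $e_{\ast}$ is manifestly an isomorphism.

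The decisive step is to propagate this base case. By naturality of $e_{\ast}$ with respect to both Mayer-Vietoris sequences and the five lemma, an isomorphism on $X_1$, $X_2$ and $X_1\cap X_2$ forces one on $X_1\cup X_2$; iterating along a decomposition of $N_X$ reduces everything to the $0$-dimensional, uniformly discrete pieces treated above. The engine driving this induction is the analytic input I would supply by adapting one of the standard routes to filtered coefficients --- Yu's quantitative-$K$-theory cutting-and-pasting, or a Dirac-dual-Dirac element furnishing a one-sided inverse to $e_{\ast}$ --- so that the inductive step and the inverse are built directly at the level of the filtered Roe algebras.

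The hard part will be the filtration bookkeeping throughout this last step. Every operator in the inverse construction --- the Bott and Dirac elements, the homotopies realising excision, and the cutting idempotents --- must be controlled simultaneously in propagation $R$ and in filtration order $r$, so that the multiplicativity $A_rA_{r'}\subseteq A_{r+r'}$ is respected and the outputs land in the filtration $C^{\ast}_f(\cdot,A)_r$ of Lemma \ref{filtration-Roe-algebra}. Keeping both parameters controlled at once is the genuinely new difficulty relative to the classical case $A=\C$, where the filtration is trivial; I expect this quantitative, order-aware form of the construction to be where the real work lies, with the Hilbert-space stability of Lemma \ref{Lem-dep-Hilspace} used repeatedly to absorb the auxiliary multiplicities it generates.
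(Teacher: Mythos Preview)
The statement you are attempting to prove is \emph{Conjecture}~\ref{CBC}, not a theorem. The paper does not prove it, and indeed it cannot be proved in the generality stated: taking $A=\C$ recovers the original coarse Baum-Connes conjecture, which is known to fail for certain metric spaces (the paper itself notes counterexamples such as large spheres and certain expanders). So any purported proof for arbitrary proper $X$ is necessarily wrong.

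Your outline makes this visible at the inductive step. You propose to cut $P_k(N_X)$ along uniformly excisive covers and reduce to $0$-dimensional pieces, invoking a Mayer--Vietoris sequence on the Roe-algebra side at each stage. But the Roe algebra does not admit a Mayer--Vietoris sequence for arbitrary uniformly excisive covers of $X$: the ideals one would need are not controlled by finite propagation in the way the localisation-algebra ideals of Lemma~\ref{MV-fil-localg} are. This is precisely why the conjecture is nontrivial. The ``analytic input'' you mention --- finite asymptotic dimension, or a Dirac--dual-Dirac element coming from coarse embeddability --- is not a bookkeeping device to make the induction go through; it is a genuine geometric hypothesis on $X$ without which the argument fails. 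The paper records exactly this: Propositions~\ref{propo-CBCFC-asy}, \ref{propo-CBC-coarse-embed}, and \ref{Thm-CBC-opencone} establish the conjecture under such hypotheses, not in general.
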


\begin{remark} We have the following comments for the above conjecture.
	\begin{enumerate}
		\item The conjecture is independent of the choice of locally finite nets $N_{X}$ of $X$. More generally, the coarse Baum-Connes conjecture with filtered coefficients is invariant under coarse equivalences by Corollary \ref{coarse-equi-isomor}. 
		\item By Corollary \ref{coarse-equi-isomor} and Lemma \ref{coarse-equi-Rips}, we have that 
		$$\lim_{k\rightarrow \infty}K_{\ast}(C_{f}^{\ast}(P_{k}(N_X), A))\cong K_{\ast}(C_{f}^{\ast}(X, A)).$$
		Thus the coarse Baum-Connes conjecture with filtered coefficients provides a formula to compute the $K$-theory of Roe algebras with filtered coefficients.
		\item If $A$ is equipped with the trivial filtration, namely, $A_r=A$ for each $r>0$. Then the conjecture with filtered coefficients in $A$ is also called the \textit{coarse Baum-Connes conjecture with coefficients} in $A$. In particular, the coarse Baum-Connes conjecture with filtered coefficients in $\C$ is so-called the \textit{coarse Baum-Connes conjecture} (cf. \cite{HigsonRoe-CBC}, \cite{Yu-CBC}).
	\end{enumerate}
\end{remark}

At the last of this section, we consider a consistent version of the coarse Baum-Connes conjecture with filtered coefficients. 

Let $(X_i, d_i)_{i\in \N}$ be a sequence of metric spaces. Define the \textit{separated coarse union} of $(X_i)_{i\in \N}$ to be a metric space $(\sqcup_{\N} X_i, d)$ by
\begin{equation}\label{productmetric}
	d(x,x')=\left \{ 
	\begin{array}{lcl}
		d_i(x,x'), & \mbox{for} & x,x'\in X_i\\
		\infty,    & \mbox{for} & x\in X_i, x'\in X_{i'}, i\neq i'. 
	\end{array}
	\right.
\end{equation}
Then the Roe algebra $C_{f}^{\ast}(\sqcup_{\N} X_i, A)$ is the norm closure in $\prod_{i\in \N} C_{f}^{\ast}(X_i, A)$ of the set
$$\bigcup_{r>0}\left(\prod_{i\in \N} C_{f}^{\ast}(X_i, A)_{r}\right),$$
where $(C_{f}^{\ast}(X_i, A)_{r})_{r>0}$ is a filtration of Roe algebra $C_{f}^{\ast}(X_i, A)$ defined in Lemma \ref{filtration-Roe-algebra}. Thus, the localization algebra $C_{L, f}^{\ast}(\sqcup_{\N} X_i, A)$ is the norm closure of the $\ast$-algebra consisting of all bounded and uniformly continuous functions $u:[0,\infty) \rightarrow C_{f}^{\ast}(\sqcup_{\N} X_i, A)$ satisfying 
$$\lim_{t\rightarrow \infty} \sup_{i\in \N}(\prop(u_i(t)))=0,$$
where $u_i(t)$ is the $i$th coordinate of $u(t)$ in $C_{f}^{\ast}(X_i, A)$.

We call the following conjecture as the \textit{consistent coarse Baum-Connes conjecture with filtered coefficients} in $A$.

\begin{conjecture}\label{Def-uniform-CBC}
	Let $X$ be a proper metric space and $N_X$ be a locally finite net in $X$, then the following homomorphism
	$$e_{\ast}: \lim_{k\rightarrow \infty} K_{\ast}(C_{L,f}^{\ast}(\sqcup_{\N} \left(P_{k}(N_X)\right), A)) \rightarrow \lim_{k\rightarrow \infty} K_{\ast}(C_{f}^{\ast}(\sqcup_{\N} \left(P_{k}(N_X)\right), A))$$
	is an isomorphism between abelian groups, namely, the coarse Baum-Connes conjecture with filtered coefficients in $A$ holds for $\sqcup_{\N} X$.
\end{conjecture}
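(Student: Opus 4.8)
The plan is to justify the statement by identifying the displayed evaluation map with the assembly map of Conjecture \ref{CBC} applied to the single proper metric space $\sqcup_{\N} X$, thereby establishing the ``namely'' clause; the asserted isomorphism is then, by construction, the ordinary coarse Baum-Connes conjecture with filtered coefficients for that space. First I would produce a net for $\sqcup_{\N} X$. Since the separated coarse union metric \eqref{productmetric} sets $d(x,x')=\infty$ whenever $x,x'$ lie in distinct copies, the disjoint union $N_{\sqcup}=\sqcup_{\N} N_X$ of one copy of the net $N_X$ per component is itself a locally finite net in $\sqcup_{\N} X$ in the sense of Definition \ref{Def-net}: the separation constant $C$ and the covering radius are inherited componentwise, and local finiteness holds because every finite-radius ball meets only a single component.

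The second step is to identify the coarsening spaces. Because distances between distinct components are infinite, no finite subset spanning a simplex of the Rips complex $P_k(N_{\sqcup})$ can contain vertices from two different copies; hence $P_k(N_{\sqcup})$ equals $\sqcup_{\N} P_k(N_X)$ as a simplicial complex. Moreover the modified metric $d_{P_k}$ is infinite between different connected components: the spherical metric $d_{S_k}$ is declared infinite across components, $d_N$ is infinite across copies, and the infimum defining $d_{P_k}$ therefore remains infinite. Thus $d_{P_k}$ on $P_k(N_{\sqcup})$ restricts to the Rips metric on each copy and is infinite between copies, which is exactly the separated coarse union metric on $\sqcup_{\N} P_k(N_X)$. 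This yields an isometric identification $P_k(N_{\sqcup})=\sqcup_{\N} P_k(N_X)$ compatible with the structural inclusions $i_{k_1k_2}$, hence with the inductive systems.

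The third step is to match the two families of $C^{\ast}$-algebras. Any element of $C_{f}^{\ast}(\sqcup_{\N} P_k(N_X), A)$ has finite propagation and, since components are infinitely far apart, is block diagonal: it decomposes as $(T_i)_i$ with $T_i\in C_{f}^{\ast}(P_k(N_X), A)$, and $\prop((T_i)_i)=\sup_i \prop(T_i)$. This is precisely the description of the Roe algebra of the separated coarse union recorded after \eqref{productmetric}, namely the norm closure in $\prod_i C_{f}^{\ast}(P_k(N_X), A)$ of $\bigcup_{r>0}\prod_i C_{f}^{\ast}(P_k(N_X), A)_r$ with the filtration of Lemma \ref{filtration-Roe-algebra}. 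Passing to localization algebras, a function $u$ lies in $C_{L,f}^{\ast}(\sqcup_{\N} P_k(N_X), A)$ iff $\lim_{t\to\infty}\prop(u(t))=0$, and by the block-diagonal computation $\prop(u(t))=\sup_i \prop(u_i(t))$, so this is exactly the uniform decay condition $\lim_{t\to\infty}\sup_i \prop(u_i(t))=0$ from Definition \ref{Def-Localg-filcoeff} adapted to the union. Hence both algebras appearing in the displayed map coincide on the nose with the Roe and localization algebras of the net $N_{\sqcup}$ in $\sqcup_{\N} X$, and the evaluation-at-zero maps agree.

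With these identifications the displayed map is the map $e_{\ast}$ of Conjecture \ref{CBC} for $\sqcup_{\N} X$ with net $N_{\sqcup}$, which proves the ``namely'' assertion and, by the coarse-invariance remark following Conjecture \ref{CBC} (via Corollary \ref{coarse-equi-isomor} and Lemma \ref{coarse-equi-Rips}), shows the formulation is independent of the chosen net. The point requiring care — and which I expect to be the main obstacle rather than the geometric identification of the Rips complexes — is the interchange of the uniform propagation bound with the filtration parameter of the coefficient algebra: one must confirm that the closure of $\bigcup_{r>0}\prod_i C_{f}^{\ast}(P_k(N_X), A)_r$ captures both the uniform finite-propagation requirement and the local $\K(\ell^2(Z_X)\otimes H)\otimes A_s$ condition with $s$ uniform in $i$, so that no non-uniform element is admitted. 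Beyond this identification, establishing the isomorphism itself for a given $X$ is precisely the ordinary coarse Baum-Connes conjecture with filtered coefficients for $\sqcup_{\N} X$, and would be attacked through the three routes described in the introduction.
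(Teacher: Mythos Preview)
The statement in question is a \emph{conjecture}, not a theorem; the paper does not prove that $e_\ast$ is an isomorphism, and you correctly recognize this at the end of your proposal. What requires justification is only the ``namely'' clause---that the displayed map is literally the assembly map of Conjecture \ref{CBC} for the space $\sqcup_{\N} X$---and your argument for this is correct and essentially the same as the paper's. The paper treats this identification as definitional and records the key fact $P_k(\sqcup_{\N} N_X)=\sqcup_{\N} P_k(N_X)$ only later, in the proof of Lemma \ref{uniCBC-CBCalongX}; your write-up spells out the same identification more carefully (net, Rips complex, block-diagonal structure of the Roe and localization algebras), but there is no substantive difference in approach.
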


\begin{remark}
	Actually, the consistent coarse Baum-Connes conjecture with coefficients in $A$ is equivalent to the quantitative coarse Baum-Connes conjecture with coefficients in $A$. Please refer to \cite{Zhang-quan-CBC} for more details.
\end{remark}

\section{Product metric spaces}\label{main-sec}
In this section, we will prove that the coarse Baum-Connes conjecture with filtered coefficients for the product metric space of two metric spaces providing each of them satisfies the conjecture with filtered coefficients. 

Let $(X, d_X)$ and $(Y, d_Y)$ be two proper metric spaces, then the product space $X\times Y$ equipped with the following metric 
$$d((x,y),(x',y'))=\max\{d_X(x,x'),d_Y(y,y')\}\:\:\textrm{for}\:\: x,x'\in X \:\:\textrm{and}\:\:y,y'\in Y,$$
is also a proper metric space. 
\begin{remark}
	If $d'$ is another metric defined on $X\times Y$ that satisfies 
	\begin{itemize}
		\item $d'((x,y),(x',y))=d_X(x,x')$ and $d'((x,y),(x,y'))=d_Y(y,y')$ for any $x, x'\in X$ and $y, y'\in Y$;
		\item $d'((x, y), (x', y'))\geq \max\{d_X(x,x'), d_Y(y, y')\}$ for any $x, x'\in X$ and $y, y'\in Y$. 
	\end{itemize}
	Then $d'$ is bi-Lipschitz equivalent to $d$ as defined above, namely, 
	$$d((x, y),(x', y'))\leq d'((x, y),(x', y')) \leq 2d((x, y),(x', y')).$$
	For examples, we can define $d'((x, y), (x', y'))=(d_X(x,x')^p+d_Y(y,y')^p)^{1/p}$ for any $p\geq 1$.
\end{remark}

\subsection{The coarse Baum-Connes conjecture along one direction with filtered coefficients}
In this subsection, we build a bridge called \textit{the localization algebra along one direction with filtered coefficients} to connect the coarse Baum-Connes conjecture with filtered coefficients for products and the conjecture with filtered coefficients for their factors.

\begin{definition}\label{Def-prop-along-X}
	Let $(X, d_{X})$ and $(Y, d_{Y})$ be two proper metric spaces with two countable dense subsets $Z_X$ and $Z_Y$, respectively. Let $A$ be a filtered $C^{\ast}$-algebra acting on a Hilbert space $H_A$. Let $T$ be an operator acting on $\ell^2(Z_X)\otimes \ell^2(Z_Y)\otimes H\otimes H_A$. The \textit{propagation along $X$} of $T$, denoted by $\prop_X(T)$, is defined to be 
	$$\sup\{d_{X}(x,x'): \:\text{there exist $y, y'\in Y$ such that $\left(\left( x, y \right)\left(x', y' \right)\right)\in \supp(T)$}\}.$$
\end{definition}

\begin{definition}\label{Def-localg-along-X}
	Let $X$ and $Y$ be as above, the \textit{localization algebra along $X$ with filtered coefficients} in $A$ of $X\times Y$, denoted by $C^{\ast}_{L,X, f}(X\times Y, A)$, is defined to be the norm closure of the $\ast$-algebra consisting of all bounded and uniformly continuous functions $u:[0, \infty)\rightarrow C_{f}^{\ast}(X\times Y, A)$ satisfying
	$$\lim_{t\rightarrow \infty}\prop_X\left(u\left(t \right)\right)=0.$$ 
\end{definition}

\begin{remark}
	Similar definitions as above are also appeared in \cite{Deng-Guo-2024}\cite{Fu-Wang-Yu-2020}. 
\end{remark}

Let $X$, $X'$ and $Y$ be three proper metric spaces. For a uniformly continuous coarse map $g: X\rightarrow X'$, then $g\times Id: X\times Y \rightarrow X'\times Y$ is also a uniformly continuous coarse map. Thus by Lemma \ref{continuous-covering-isometry}, there exists a $\ast$-homomorphism $Ad_{g\times Id}: C^{\ast}_{L, f}(X\times Y, A)\rightarrow C^{\ast}_{L, f}(X'\times Y, A)$ defined by $V_{g\times Id}=V_g\otimes I_{\ell^2(Z_Y)}$. Moreover, for $u\in C^{\ast}_{L,X,f}(X\times Y, A)$, we have that
$$\prop_{X'}(V_{g\times Id}(t)u(t)V^{\ast}_{g\times Id}(t))\leq  w_g(\prop_{X}(u(t)))+2\delta_k,$$
for any $t\in [k,k+1]$, where $\lim_{k\rightarrow \infty}\delta_k=0$. Thus $Ad_{g\times Id}$ can be extended to a $\ast$-homomorphism from $C^{\ast}_{L,X,f}(X\times Y, A)$ to $C^{\ast}_{L,X',f}(X'\times Y, A)$. 

By the similar proofs of Lemma \ref{Lip-homotopy-invaraint} and Lemma \ref{MV-fil-localg}, we also have the following strongly Lipschitz homotopy invariance and Mayer-Vietoris six-term exact sequence for the $K$-theory of localization algebra along $X$ with filtered coefficients.

\begin{lemma}\label{Lem-Liphomo-localg-along-X}
	Let $X$, $X'$ and $Y$ be three proper metric spaces. If $X$ is strongly Lipschitz homotopy equivalent to $X'$, then $K_{\ast}(C^{\ast}_{L,X, f}(X\times Y, A))$ is naturally isomorphic to $K_{\ast}(C^{\ast}_{L,{X'}, f}(X'\times Y, A))$.
\end{lemma}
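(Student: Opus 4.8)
The plan is to transcribe the proof of Lemma \ref{Lip-homotopy-invaraint}, carrying the auxiliary tensor factor $\ell^2(Z_Y)$ along and systematically replacing $\prop$ by $\prop_X$. By hypothesis there are Lipschitz maps $g: X\to X'$ and $h: X'\to X$ such that $hg$ and $gh$ are strongly Lipschitz homotopic to $id_X$ and $id_{X'}$; as noted just before the statement, $g\times Id$ and $h\times Id$ are uniformly continuous coarse maps whose induced $\ast$-homomorphisms extend to $Ad_{g\times Id}: C^\ast_{L,X,f}(X\times Y,A)\to C^\ast_{L,X',f}(X'\times Y,A)$ and $Ad_{h\times Id}$ in the other direction. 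Using the last part of Lemma \ref{continuous-covering-isometry} to identify $Ad_{h\times Id}\circ Ad_{g\times Id}$ with $Ad_{(hg)\times Id}$ on $K$-theory, it then suffices to show that $Ad_{(hg)\times Id}$ induces the identity on $K_\ast(C^\ast_{L,X,f}(X\times Y,A))$; the case of $Ad_{(gh)\times Id}$ is identical.

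Next I would reproduce the construction from the proof of Lemma \ref{Lip-homotopy-invaraint} over $X\times Y$. Let $F(t,x)$ be a strongly Lipschitz homotopy with Lipschitz constant $C$, $F(0,\cdot)=hg$ and $F(1,\cdot)=id_X$, and pick the double sequence $\{t_{i,j}\}_{i,j\geq 0}$ in $[0,1]$ with the three listed properties. Replace the multipliers $V_{F(t_{i,j},\cdot)}$ by $V_{F(t_{i,j},\cdot)}\otimes I_{\ell^2(Z_Y)}$, which are isometric multipliers of $C^\ast_{L,X,f}(X\times Y,A)$ satisfying
$$\supp\bigl(V_{F(t_{i,j},\cdot)}(t)\otimes I_{\ell^2(Z_Y)}\bigr)\subseteq\bigl\{\bigl((x,y),(x',y)\bigr)\in (X\times Y)^2:\ d_X(F(t_{i,j},x),x')<1/(1+i+j)\bigr\},$$
and define $V_i(t)$ by the same rotation formula involving $R(t)$. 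The one genuinely new point is the propagation bookkeeping: for $u\in C^\ast_{L,X,f}(X\times Y,A)$ and $t\in[j,j+1]$ one has $\prop_X\bigl(V_i(t)((u(t)-I)\oplus 0)V_i^\ast(t)+I\bigr)\leq (C+1)\prop_X(u(t))+1/(j+1)$, which tends to $0$, while conjugation by $V_i(t)$ leaves $\prop_Y$ unchanged and finite, since $V_i(t)$ acts as the identity on $\ell^2(Z_Y)$. Hence the three unitaries built below all lie in $M_2(C^\ast_{L,X,f}(X\times Y,A,H^\infty))^{+}$, where $C^\ast_{L,X,f}(X\times Y,A,H^\infty)$ denotes the variant built with $H^\infty$ in place of $H$; the obvious analogue of Lemma \ref{Lem-dep-Hilspace}, proved via Lemma \ref{isometryequivalent}, identifies its $K$-theory with that of $C^\ast_{L,X,f}(X\times Y,A)$.

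Finally I would run the three-unitary argument verbatim. For $[u]\in K_1(C^\ast_{L,X,f}(X\times Y,A))$ with $u\in C^\ast_{L,X,f}(X\times Y,A)^{+}$, set $W=u\oplus I$ and define $a_1,a_2,a_3$ by the same three formulas as in the proof of Lemma \ref{Lip-homotopy-invaraint}. As there, the inequality $d_X(F(t_{i,j},x),F(t_{i+1,j},x))\leq 1/(j+1)$ yields $[a_1]=[a_2]$, conjugation by the shift isometry $I_{\ell^2(Z_X)}\otimes I_{\ell^2(Z_Y)}\otimes V\otimes I_{H_A}$ on $H^\infty$ together with Lemma \ref{isometryequivalent} yields $[a_2]=[a_3]$, and then $[a_1a_3^{-1}]=[Ad_{(hg)\times Id}(u)W^\ast\oplus\bigoplus_{i\geq 1}I]=[I]$, so that $Ad_{(hg)\times Id,\ast}=id$ on $K_1(C^\ast_{L,X,f}(X\times Y,A))$. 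A suspension argument gives the case $\ast=0$, and one checks naturality of the resulting isomorphism exactly as for Lemma \ref{Lip-homotopy-invaraint}. I expect the only point requiring real care is the propagation bookkeeping of the previous paragraph --- keeping $\prop_X$ tending to zero while $\prop_Y$ stays finite, so that all the constructed elements genuinely belong to the along-one-direction algebra; everything else is a direct transcription of the earlier proof.
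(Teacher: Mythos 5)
Your proposal is correct and matches the paper's intent exactly: the paper gives no separate argument for this lemma, merely asserting that it follows ``by the similar proofs'' of Lemma \ref{Lip-homotopy-invaraint}, which is precisely the transcription you carry out (tensoring the covering isometries with $I_{\ell^2(Z_Y)}$, replacing $\prop$ by $\prop_X$, and checking that the total propagation in the $Y$-direction remains finite). Your explicit propagation bookkeeping is the one point the paper leaves implicit, and you handle it correctly.
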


\begin{lemma}\label{Lem-MV-localg-along-X}
	Let $X=X_1\cup X_2$ be a uniformly excisive cover. If there exists $\varepsilon_0>0$ such that the $\varepsilon_0$-neighborhoods of $X_1, X_2$ and $X_1\cap X_2$ are strongly Lipschitz homotopy equivalent to $X_1, X_2$ and $X_1\cap X_2$, respectively. Then there is a Mayer-Vietoris six-term exact sequence
	$$\scriptsize\xymatrix{
		K_0(C^{\ast}_{L,X_{1,2}, f}(X_{1,2}\times Y, A)) \ar[r] & 
		\bigoplus_{i=1,2}K_0(C^{\ast}_{L,{X_i}, f}(X_i\times Y, A))
		\ar[r] & 
		K_0(C^{\ast}_{L,{X}, f}(X\times Y, A))\ar[d] \\
		K_1(C^{\ast}_{L,{X}, f}(X\times Y, A))\ar[u] &
		\bigoplus_{i=1,2}K_1(C^{\ast}_{L,{X_i}, f}(X_i\times Y, A))
		\ar[l] &
		K_1(C^{\ast}_{L,X_{1,2}, f}(X_{1,2}\times Y, A)),\ar[l] 
	}$$
	where $X_{1,2}=X_1\cap X_2$.
\end{lemma}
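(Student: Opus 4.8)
The plan is to run the proof in parallel with that of Lemma~\ref{MV-fil-localg}, replacing the ordinary propagation by the propagation along $X$ throughout and carrying the product factor $Y$ along unchanged. For $i=1,2$ I would introduce the closed $\ast$-subalgebra $C^{\ast}_{L,X,f}(X\times Y,A;X_i)$ of $C^{\ast}_{L,X,f}(X\times Y,A)$ generated by those functions $u$ for which there is $c_t\geq 0$ with $\lim_{t\to\infty}c_t=0$ and $\supp(u(t))$ contained in $$\{((x,y),(x',y'))\in (X\times Y)^2:\ d_X(x,X_i)\leq c_t\ \text{and}\ d_X(x',X_i)\leq c_t\}$$ for all $t\geq 0$. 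Since for $a\in C^{\ast}_{L,X,f}(X\times Y,A)$ one has $\prop_X(a(t))\to 0$, multiplying a generator of $C^{\ast}_{L,X,f}(X\times Y,A;X_i)$ on either side by such an $a$ changes the control function only by an additive term that still tends to $0$; hence, after passing to norm closures (working first on the dense subalgebras on which $\sup_t\prop_X(\cdot)<\infty$, where this additive term is finite), each $C^{\ast}_{L,X,f}(X\times Y,A;X_i)$ is a closed two-sided ideal of $C^{\ast}_{L,X,f}(X\times Y,A)$.

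I would then establish the three algebraic facts needed for Lemma~\ref{Lem-MV-sequence}. Density of the sum: given $u$ in the dense subalgebra of $C^{\ast}_{L,X,f}(X\times Y,A)$, cut it with the propagation-zero projections $P_1=M_{\chi_{X_1}}\otimes I$ and $P_2=I-P_1$ on $\ell^2(Z_X)\otimes\ell^2(Z_Y)\otimes H\otimes H_A$ (using $Z_X\subseteq X_1\cup X_2$); in $u=P_1uP_1+P_1uP_2+P_2uP_1+P_2uP_2$ each of the first three summands has, at time $t$, one of its two $X$-coordinates in $X_1$ and the two $X$-coordinates within $\prop_X(u(t))$ of each other, so it lies in $C^{\ast}_{L,X,f}(X\times Y,A;X_1)$, while $P_2uP_2$ lies in $C^{\ast}_{L,X,f}(X\times Y,A;X_2)$. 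Intersection: $C^{\ast}_{L,X,f}(X\times Y,A;X_1)\cap C^{\ast}_{L,X,f}(X\times Y,A;X_2)=C^{\ast}_{L,X,f}(X\times Y,A;X_1\cap X_2)$, where $\supseteq$ is trivial and for $\subseteq$ the uniform excisiveness of $X=X_1\cup X_2$ converts control by $c_t\to 0$ around both $X_1$ and $X_2$ in the $X$-direction into control by some $\delta_t\to 0$ around $X_1\cap X_2$. Identification of the pieces: exactly as in Lemma~\ref{MV-fil-localg}, for $u\in C^{\ast}_{L,X,f}(X\times Y,A;X_i)$ one uses $c_t\to 0$ to find $t_0$ with $\supp(u(t+t_0))\subseteq (B_{\varepsilon_0}(X_i)\times Y)^2$ and the norm-continuous homotopy $s\mapsto u(\cdot+st_0)$ inside $C^{\ast}_{L,X,f}(X\times Y,A;X_i)$ to reduce to support in $B_{\varepsilon_0}(X_i)\times Y$; since the hypothesised strongly Lipschitz homotopy equivalence $B_{\varepsilon_0}(X_i)\simeq X_i$ becomes, after taking the product with $\mathrm{id}_Y$, a strongly Lipschitz homotopy equivalence $B_{\varepsilon_0}(X_i)\times Y\simeq X_i\times Y$, Lemma~\ref{Lem-Liphomo-localg-along-X} then gives $K_\ast(C^{\ast}_{L,X,f}(X\times Y,A;X_i))\cong K_\ast(C^{\ast}_{L,X_i,f}(X_i\times Y,A))$, and likewise $K_\ast(C^{\ast}_{L,X,f}(X\times Y,A;X_1\cap X_2))\cong K_\ast(C^{\ast}_{L,X_{1,2},f}(X_{1,2}\times Y,A))$.

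Feeding the triple $\big(C^{\ast}_{L,X,f}(X\times Y,A;X_1),\,C^{\ast}_{L,X,f}(X\times Y,A;X_2),\,C^{\ast}_{L,X,f}(X\times Y,A)\big)$ into Lemma~\ref{Lem-MV-sequence} and rewriting the six terms via these $K$-isomorphisms then yields the desired exact sequence; naturality in Lemma~\ref{Lem-MV-sequence} guarantees that the maps are those induced by the inclusions of the pieces. I expect the only real difficulty to be bookkeeping: one must be careful that the support conditions are imposed on the $X$-coordinates alone — so that multiplication by an element of $C^{\ast}_f(X\times Y,A)$, whose $X$-propagation need not be small at small times, is handled by first restricting to a dense subalgebra and only then closing up — and one must check that the inert factor $Y$ never interacts with the excisiveness and neighbourhood hypotheses, which are all statements about $X=X_1\cup X_2$ alone. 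No genuinely new idea beyond the proofs of Lemma~\ref{MV-fil-localg} and Lemma~\ref{Lip-homotopy-invaraint} is required.
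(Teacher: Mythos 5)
Your proposal is correct and follows exactly the route the paper intends: the paper gives no separate proof of this lemma, asserting only that it follows ``by the similar proofs of Lemma \ref{Lip-homotopy-invaraint} and Lemma \ref{MV-fil-localg},'' and your argument is precisely that adaptation, with the support/ideal conditions imposed on the $X$-coordinates only and the factor $Y$ carried along inertly. The details you supply (ideal property via $\prop_X(a(t))\to 0$, density via the cut-down by $\chi_{X_1}$, intersection via uniform excisiveness, identification via Lemma \ref{Lem-Liphomo-localg-along-X}, then Lemma \ref{Lem-MV-sequence}) are the correct fleshing-out of that sketch.
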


Now we introduce the \textit{coarse Baum-Connes conjecture along $X$ with filtered coefficients} for $X\times Y$.

\begin{conjecture}\label{Conj-CBC-along-X}
	Let $X$, $Y$ be two proper metric spaces and $N_X$ be a locally finite net in $X$. Then the evaluation at zero map $e$ induces the following isomorphism
	$$e_{\ast}: \lim_{k\rightarrow \infty} K_{\ast}(C^{\ast}_{L, P_k(N_X), f}(P_k(N_X)\times Y, A))\rightarrow \lim_{k\rightarrow \infty} K_{\ast}( C_{f}^{\ast}(P_k(N_X) \times Y, A)).$$
\end{conjecture}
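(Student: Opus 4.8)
The plan is to recognise the coarse Baum--Connes conjecture along $X$ with filtered coefficients for $X\times Y$ as being, up to routine identifications, the coarse Baum--Connes conjecture with filtered coefficients for $X$ with coefficients in the filtered $C^{\ast}$-algebra $B:=C^{\ast}_f(Y,A)$ (which is filtered by Lemma~\ref{filtration-Roe-algebra}); under that dictionary it holds precisely when $X$ satisfies that conjecture, i.e.\ under hypothesis~(1) of Theorem~\ref{main-Thm}. Concretely I would produce, for every $k$, natural $K$-theory isomorphisms
$$K_*\big(C^{\ast}_{L,P_k(N_X),f}(P_k(N_X)\times Y, A)\big)\;\cong\;K_*\big(C^{\ast}_{L,f}(P_k(N_X), B)\big),$$
$$K_*\big(C^{\ast}_f(P_k(N_X)\times Y, A)\big)\;\cong\;K_*\big(C^{\ast}_f(P_k(N_X), B)\big),$$
intertwining the evaluation-at-zero maps $e_*$ and the structure maps $ad_{i_{k_1k_2},*}$, $Ad_{i_{k_1k_2},*}$ of the two inductive systems, and then pass to the inductive limit over $k$.

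First I would build the identification at the level of Roe algebras. After identifying $Z_{X\times Y}$ with $Z_X\times Z_Y$ (legitimate by the corollary after Lemma~\ref{covering-isometry}) and regrouping $\ell^2(Z_X)\otimes\ell^2(Z_Y)\otimes H\otimes H_A$ as $\ell^2(Z_X)\otimes\big(\ell^2(Z_Y)\otimes H\otimes H_A\big)$, one views an operator $T$ on it as an $(X\times X)$-matrix whose entries act on the representation space $H_B=\ell^2(Z_Y)\otimes H\otimes H_A$ of $B$. Unwinding Definition~\ref{Def-Roe-filcoeff} and Lemma~\ref{filtration-Roe-algebra}, the two defining conditions of $C^{\ast}_f(X\times Y, A)$ --- finite propagation in $d=\max\{d_X,d_Y\}$, and $(\chi_K\otimes I\otimes I)T\in\K(\ell^2(Z_{X\times Y})\otimes H)\otimes A_s$ for compact $K$ --- split exactly into: finite $X$-propagation of $T$, a uniform bound on the $Y$-propagations of the entries, and local compactness over $X$ with values in $\K(\ell^2(Z_X)\otimes H)\otimes B_s$ (the entries lying in $B_s=C^{\ast}_f(Y,A)_s$ by their own finite propagation and $A_s$-local compactness); this is precisely the defining data of $C^{\ast}_f(X,B)$ and its filtration. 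The only genuine discrepancy is an extra tensor copy of $H$ on the right (since $C^{\ast}_f(Y,A)$ is represented on $\ell^2(Z_Y)\otimes H\otimes H_A$), which is absorbed by an isometry $\xi\mapsto\xi\otimes\xi_0$ as in Lemma~\ref{Lem-dep-Hilspace} and induces a $K$-theory isomorphism via Lemma~\ref{isometryequivalent} together with quasi-stability (Lemma~\ref{Roe-quasi-stable}).

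Next I would carry this through the loop parameter and the Rips system. An element of $C^{\ast}_{L,X,f}(X\times Y, A)$ is a bounded, uniformly continuous path $u$ in $C^{\ast}_f(X\times Y, A)$ with $\prop_X(u(t))\to0$; under the identification above $\prop_X$ becomes the ordinary propagation of the corresponding path in $C^{\ast}_f(X,B)$, so $u$ corresponds precisely to an element of $C^{\ast}_{L,f}(X,B)$, and evaluation at $0$ matches evaluation at $0$. For the structure maps, the Rips inclusion $i_{k_1k_2}$ induces $i_{k_1k_2}\times\mathrm{Id}$ on products, whose covering isometry may be taken of the form $V_{i_{k_1k_2}}\otimes I_{\ell^2(Z_Y)}$ (as in the construction preceding Lemma~\ref{Lem-Liphomo-localg-along-X}); under the regrouping this is exactly a covering isometry for $i_{k_1k_2}$ with $B$-coefficients, so the conjugations $Ad$ and $ad$ correspond. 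Hence the two inductive systems are isomorphic as inductive systems, compatibly with $e_*$, and passing to $\lim_k$ and invoking the coarse Baum--Connes conjecture with filtered coefficients for $X$ in $C^{\ast}_f(Y,A)$ gives the isomorphism in Conjecture~\ref{Conj-CBC-along-X}.

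The main obstacle is the bookkeeping in the first step: one must check carefully that the single ``uniform $s$'' of Definition~\ref{Def-Roe-filcoeff} simultaneously supplies the $A_s$-degree for local compactness and the filtration degree of the coefficient algebra $C^{\ast}_f(Y,A)$, and that for a finite-propagation operator ``local compactness over all compact $K\subseteq X\times Y$'' is truly equivalent to the conjunction of local compactness in the two separate variables --- this uses local finiteness of $N_X$ and properness of $Y$ to reduce compact subsets of $X\times Y$ to products. One must also pin down the $H$-ampliation so that it is compatible with every map in play (evaluation, the Rips structure maps, and the Mayer--Vietoris sequences of Lemmas~\ref{Lem-MV-localg-along-X} and~\ref{Lem-MV-sequence}), making the dictionary $C^{\ast}_{(L,)X,f}(X\times Y, A)\leftrightarrow C^{\ast}_{(L,)f}(X,C^{\ast}_f(Y,A))$ natural rather than a mere abstract isomorphism. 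With that dictionary in hand the remainder is formal.
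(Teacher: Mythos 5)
The statement you are addressing is stated in the paper as a conjecture; what the paper actually proves is Proposition \ref{Prop-CBC-X-filcoeff}, which establishes it under hypothesis (1) of Theorem \ref{main-Thm}, and your proposal is conditional on that same hypothesis, so that is the right comparison. Your overall strategy --- reduce the along-$X$ assembly map for $X\times Y$ to the assembly map for $X$ with filtered coefficients in $B=C^{\ast}_f(Y,A)$ --- is indeed the paper's strategy. But the way you carry out the reduction has a genuine gap: you claim that, after regrouping the Hilbert space, the defining conditions of $C^{\ast}_f(X\times Y,A)$ ``split exactly'' into those of $C^{\ast}_f(X,B)$, i.e.\ that the two algebras agree up to an ampliation of $H$. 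This is false, and the paper says so explicitly in the remark following Proposition \ref{Prop-CBC-X-filcoeff}: for $X$ and $Y$ both a separated coarse union of single points and $A=\C$ one gets $\ell^{\infty}(X,\ell^{\infty}(Y,\K(H)))$ versus $\ell^{\infty}(X,\K(H)\otimes\ell^{\infty}(Y,\K(H)))$, which are not isomorphic. The point your bookkeeping misses is the local compactness condition: an entry $T_{x,x'}$ of an element of $C^{\ast}_f(X\times Y,A)$ is an operator on $\ell^2(Z_Y)\otimes H\otimes H_A$ that is locally compact in the joint variable, whereas membership in $\K(H)\otimes B_s$ demands an external tensor factorization into a compact operator on the auxiliary copy of $H$ times an element of $C^{\ast}_f(Y,A)_s$, and a general locally compact operator admits no such factorization. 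No ampliation of $H$ repairs this.

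What the paper does instead is construct two $\ast$-homomorphisms $\psi: C^{\ast}_f(X\times Y,A)\to C^{\ast}_f(X,C^{\ast}(Y,A))$, $S\mapsto S\otimes P$ with $P$ a rank-one projection (essentially the isometry $\xi\mapsto\xi\otimes\xi_0$ you mention), and $\phi$ in the opposite direction by conjugating with a unitary $U:H\to H\otimes H$, such that $\phi\circ\psi$ induces the identity on $K$-theory by Lemma \ref{isometryequivalent}; nothing is claimed, and nothing is true in general, about $\psi\circ\phi$. The along-$X$ assembly map is then exhibited as a retract of the assembly map for $X$ with coefficients in $C^{\ast}_f(Y,A)$ via a three-row commutative diagram whose outer vertical composites are the identity, and the isomorphism of the top and bottom rows (which coincide) follows from that of the middle row by a diagram chase giving injectivity and surjectivity separately. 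Your proposal has the right ingredients, but it asserts a two-sided $K$-theory equivalence where only a one-sided retraction is available (and is all that is needed); as written, the first step of your argument fails.
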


\begin{lemma}\label{Lem-CBCalong-coarse-equi}
	If two proper metric spaces $Y$ and $Y'$ are coarsely equivalent, then the coarse Baum-Connes conjecture along $X$ with filtered coefficients holds for $X\times Y$ if and only if it holds for $X\times Y'$. 
\end{lemma}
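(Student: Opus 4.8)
The plan is to transport the coarse equivalence $Y\simeq Y'$ through the $Y$-coordinate while keeping propagation along $X$ under control. Fix a coarse equivalence $g\colon Y\to Y'$ with coarse inverse $h\colon Y'\to Y$ as in Definition \ref{Def-coarse-equi}. I would first record that $\mathrm{Id}_{X}\times g\colon X\times Y\to X\times Y'$ is a coarse map, with $w_{\mathrm{Id}_X\times g}(R)=\max\{R,w_g(R)\}$ and $(\mathrm{Id}_X\times g)^{-1}$ of a compact set contained in a product of a compact set with $g^{-1}$ of a compact set, and that together with $\mathrm{Id}_X\times h$ it is a coarse equivalence whose composites $\mathrm{Id}_X\times(hg)$ and $\mathrm{Id}_X\times(gh)$ lie at bounded distance from the respective identity maps. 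Everything below applies with $X$ replaced by any Rips complex $P_k(N_X)$.

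The key point is the choice of implementing isometry. Taking $Z_{X\times Y}=Z_X\times Z_Y$, the covering isometry of Lemma \ref{covering-isometry} for the coarse map $\mathrm{Id}_X\times g$ at a scale $\delta>0$ may be chosen of the product form $V=I_{\ell^2(Z_X)}\otimes V_g\otimes I_{H_A}$, where $V_g\colon\ell^2(Z_Y)\otimes H\to\ell^2(Z_{Y'})\otimes H$ is a covering isometry for $g$ alone. Such a $V$ has $\supp(V)\subseteq\{((x,y),(x',y'))\colon x=x',\ d_{Y'}(g(y),y')<\delta\}$, so it satisfies condition \eqref{*} for $\mathrm{Id}_X\times g$ and, crucially, $\prop_X(V)=0$. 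By Lemma \ref{covering-isometry}, $ad_{\mathrm{Id}_X\times g}(T)=VTV^{\ast}$ is a $\ast$-homomorphism from $C^{\ast}_f(X\times Y,A)$ to $C^{\ast}_f(X\times Y',A)$; and $\prop_X(V)=0$ yields $\prop_X(VTV^{\ast})\le\prop_X(T)$ by the usual support-of-a-product estimate, while conjugation by the \emph{time-independent} operator $V$ preserves boundedness and uniform continuity. Hence, applied pointwise in $t$, $ad_{\mathrm{Id}_X\times g}$ restricts to a $\ast$-homomorphism
$$C^{\ast}_{L,X,f}(X\times Y,A)\longrightarrow C^{\ast}_{L,X,f}(X\times Y',A),\qquad u\longmapsto\bigl(t\mapsto Vu(t)V^{\ast}\bigr).$$
Building $ad_{\mathrm{Id}_X\times h}$ symmetrically and arguing as in the last paragraph of the proof of Lemma \ref{covering-isometry} (via Lemma \ref{isometryequivalent}), the two composites $ad_{\mathrm{Id}_X\times h}\circ ad_{\mathrm{Id}_X\times g}$ and $ad_{\mathrm{Id}_X\times g}\circ ad_{\mathrm{Id}_X\times h}$ induce the identity on $K$-theory of both the Roe algebras and the localization-along-$X$ algebras, since they coincide on $K$-theory with $ad$ of maps at bounded distance from identities. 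Therefore $ad_{\mathrm{Id}_X\times g}$ is an isomorphism on $K_{\ast}$ in both cases.

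Finally I would assemble these into a commuting square of inductive limits. For $k\le k'$ the identity $(i_{kk'}\times\mathrm{Id}_{Y'})\circ(\mathrm{Id}\times g)=i_{kk'}\times g=(\mathrm{Id}\times g)\circ(i_{kk'}\times\mathrm{Id}_Y)$, together with the functoriality of $ad$ up to $K$-theory, makes the maps $ad_{\mathrm{Id}_{P_k(N_X)}\times g}$ compatible with the structure maps of the inductive systems; and since the same spatial isometry $V$ implements $ad$ on the Roe and localization-along-$X$ sides, they are compatible with the evaluation-at-zero maps $e$. Passing to $\lim_k$ gives a commuting square whose verticals are the induced isomorphisms on $\lim_k K_{\ast}(C^{\ast}_{L,P_k(N_X),f}(P_k(N_X)\times Y,A))$ and on $\lim_k K_{\ast}(C^{\ast}_f(P_k(N_X)\times Y,A))$, and whose horizontals are the two instances of $e_{\ast}$ of Conjecture \ref{Conj-CBC-along-X}; hence one is an isomorphism if and only if the other is. The only step needing real care is the observation in the second paragraph that the implementing isometry can be taken trivial in the $X$-direction: this is exactly what guarantees that conjugation by it preserves the defining condition $\lim_{t\to\infty}\prop_X(u(t))=0$, and hence restricts to the algebras $C^{\ast}_{L,X,f}$; the rest is the routine compatibility bookkeeping already used for the Roe algebras.
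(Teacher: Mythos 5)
Your proof is correct and follows essentially the same route as the paper, which simply invokes the coarse equivalence $\mathrm{Id}\times g\colon P_k(N_X)\times Y\to P_k(N_X)\times Y'$ together with Corollary \ref{coarse-equi-isomor} and naturality in $k$. Your additional observation that the implementing isometry can be taken of product form, hence with $\prop_X=0$, so that conjugation preserves the defining condition of $C^{\ast}_{L,X,f}$, is exactly the point the paper's one-line proof leaves implicit.
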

\begin{proof}
	Let $g: Y\rightarrow Y'$ be a coarsely equivalent map. Then $Id\times g: P_{k}(N_X)\times Y \rightarrow P_{k}(N_X)\times Y'$ is also coarsely equivalent map, which is uniformly continuous on $P_{k}(N_X)$. Then by Corollary \ref{coarse-equi-isomor}, we can show that $K_{\ast}(C^{\ast}_{L, P_k(N_X), f}(P_k(N_X)\times Y, A))$ and $K_{\ast}(C^{\ast}_f(P_k(N_X)\times Y, A))$ are naturally isomorphic to $K_{\ast}(C^{\ast}_{L, P_k(N_X), f}(P_k(N_X)\times Y', A))$ and $K_{\ast}(C^{\ast}_f(P_k(N_X)\times Y', A))$, respectively, for any $k\in \N$. Thus we completed the proof. 
\end{proof}

Recall from Lemma \ref{filtration-Roe-algebra} that $C^{\ast}_f(Y, A)$ is a filtered $C^{\ast}$-algebra equipped with the filtration induced by the propagation on $Y$ and the filtration on $A$. Then we have the following relation between the coarse Baum-Connes conjecture along $X$ with filtered coefficients and the coarse Baum-Connes conjecture with filtered coefficients. 

\begin{proposition}\label{Prop-CBC-X-filcoeff}
	Let $X$, $Y$ be two proper metric spaces and $A$ be a filtered $C^{\ast}$-algebra acting on a Hilbert space $H_A$. If the coarse Baum-Connes conjecture with filtered coefficients in $C_{f}^{\ast}(Y, A)$ holds for $X$, then the coarse Baum-Connes conjecture along $X$ with filtered coefficients in $A$ holds for $X\times Y$. 
\end{proposition}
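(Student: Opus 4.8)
The plan is to recognize the coarse Baum--Connes conjecture along $X$ for $X\times Y$ as an instance of the coarse Baum--Connes conjecture with filtered coefficients for $X$, the coefficient algebra being the Roe algebra $C^{\ast}_f(Y,A)$ with the filtration of Lemma~\ref{filtration-Roe-algebra}, and then to invoke the hypothesis. The bridge is a Fubini-type identification of Roe algebras. For a proper metric space $Z$ — which I will apply to each Rips complex $Z=P_k(N_X)$ — the first step is to produce a $\ast$-isomorphism $C^{\ast}_f(Z\times Y,A)\cong C^{\ast}_f\bigl(Z,C^{\ast}_f(Y,A)\bigr)$. One obtains it by fixing an identification of the auxiliary Hilbert space $H$ with $H\otimes H$ (implemented by a unitary that is block-diagonal over $Z_Z\times Z_Y$, hence harmless) and then reading an operator on $\ell^2(Z_Z\times Z_Y)\otimes H\otimes H_A=\ell^2(Z_Z)\otimes\bigl(\ell^2(Z_Y)\otimes H\otimes H_A\bigr)$ as a $(Z_Z\times Z_Z)$-matrix whose entries act on the Hilbert space of $C^{\ast}_f(Y,A)$. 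The crucial feature is that under this identification the propagation along $Z$ of Definition~\ref{Def-prop-along-X} becomes exactly the propagation of $T$ viewed as an element of the Roe algebra $C^{\ast}_f\bigl(Z,C^{\ast}_f(Y,A)\bigr)$: an operator has finite propagation in $Z\times Y$ precisely when it has finite propagation in the $Z$-direction and its matrix entries have uniformly bounded propagation in $Y$, and — since all filtrations are increasing — this matches membership in the defining $\ast$-algebra of $C^{\ast}_f\bigl(Z,C^{\ast}_f(Y,A)\bigr)$.

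Granting this, the identification restricts to a $\ast$-isomorphism $C^{\ast}_{L,Z,f}(Z\times Y,A)\cong C^{\ast}_{L,f}\bigl(Z,C^{\ast}_f(Y,A)\bigr)$, because the condition $\prop_Z(u(t))\to 0$ of Definition~\ref{Def-localg-along-X} turns into the condition $\prop(u(t))\to 0$ of Definition~\ref{Def-Localg-filcoeff}, and it evidently intertwines the evaluation-at-zero maps. Since a covering isometry for $i_{k_1k_2}\times\mathrm{Id}\colon P_{k_1}(N_X)\times Y\to P_{k_2}(N_X)\times Y$ may be taken of the form $V_{i_{k_1k_2}}\otimes I_{\ell^2(Z_Y)}$, the induced $K$-theory isomorphisms are compatible with the structure maps of the two inductive systems $\{K_{\ast}(C^{\ast}_f(P_k(N_X)\times Y,A))\}$ and $\{K_{\ast}(C^{\ast}_f(P_k(N_X),C^{\ast}_f(Y,A)))\}$, and likewise on the localization side. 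Passing to $K$-theory and then to the inductive limit over $k$, the evaluation homomorphism $e_{\ast}$ of Conjecture~\ref{Conj-CBC-along-X} for $X\times Y$ is carried isomorphically onto the evaluation homomorphism $e_{\ast}$ of Conjecture~\ref{CBC} for $X$ with filtered coefficients in $C^{\ast}_f(Y,A)$. By hypothesis the latter is an isomorphism, hence so is the former, which is exactly the assertion.

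The one substantive point is the Fubini identification, and within it the hard part will be matching the local compactness conditions. Membership in $C^{\ast}_f\bigl(Z,C^{\ast}_f(Y,A)\bigr)$ requires a single filtration degree $s$ controlling simultaneously the $Z$-compactness of the compressions $(\chi_K\otimes I\otimes I)T$ and, inside $C^{\ast}_f(Y,A)_s$, both the propagation in $Y$ and the degree in $A$; whereas membership in $C^{\ast}_f(Z\times Y,A)$ only directly controls the degree in $A$ over compact subsets of $Z\times Y$. Reconciling the two uses the properness of $Y$ — to enlarge a compact subset of $Z\times Y$ to a product $K_Z\times K_Y$ of compact sets, and thereby to pass between "compact in $Z\times Y$" and "compact in $Z$ together with bounded propagation in $Y$" — together with the monotonicity of the filtrations to replace finitely many constants by their maximum. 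I expect this bookkeeping, rather than any conceptual difficulty, to be the main obstacle; the remainder of the argument is formal.
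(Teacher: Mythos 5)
There is a genuine gap: the Fubini identification $C^{\ast}_f(Z\times Y,A)\cong C^{\ast}_f\bigl(Z,C^{\ast}_f(Y,A)\bigr)$ on which your entire reduction rests does not exist, and the failure sits exactly in the ``local compactness'' bookkeeping that you flag and then defer as routine. A matrix entry of an element of $C^{\ast}_f\bigl(Z,C^{\ast}_f(Y,A)\bigr)$ must lie in $\K(H)\otimes C^{\ast}_f(Y,A)_s$, i.e.\ it must be compact in the auxiliary $H$-factor \emph{uniformly over the non-compact space $Y$}; an element of $C^{\ast}_f(Z\times Y,A)$ is only required to be locally compact over compact subsets of $Z\times Y$, which imposes no uniform condition in the $Y$-direction. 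The simplest case already breaks: for $Z$ a single point, $Y$ an infinite separated coarse union of points and $A=\C$, one gets $\ell^{\infty}(Y,\K(H))$ on one side and $\K(H)\otimes\ell^{\infty}(Y,\K(H))$ on the other, and these are not isomorphic --- the paper records precisely this example in the remark following the proposition. Properness of $Y$ cannot repair this, since the whole point of $C^{\ast}_f(Y,A)$ is that $Y$ is unbounded.

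What survives of your idea, and is what the paper actually does, is a one-sided version. There is a $\ast$-homomorphism $\psi\colon C^{\ast}_f(Z\times Y,A)\to C^{\ast}_f\bigl(Z,C^{\ast}_f(Y,A)\bigr)$ given by $S\mapsto S\otimes P$ for a rank-one projection $P$ on $H$ (which creates the missing uniform compactness by hand), and a $\ast$-homomorphism $\phi$ in the other direction induced by a unitary $H\otimes H\cong H$, such that $\phi\circ\psi$ is conjugation by an isometry in the multiplier algebra and hence the identity on $K$-theory by Lemma~\ref{isometryequivalent}. The same pair exists on the localization side and commutes with evaluation at zero, so the assembly map of Conjecture~\ref{Conj-CBC-along-X} for $X\times Y$ is exhibited as a retract (in the arrow category) of the assembly map for $X$ with filtered coefficients in $C^{\ast}_f(Y,A)$. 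The hypothesis then gives injectivity of the top row and surjectivity of the bottom row of the resulting three-row diagram, and since those rows are the same map, it is an isomorphism. Your reduction strategy is the right one, but it must be run through this retract argument rather than through an isomorphism of algebras; as stated, your proof asserts an identification that is false.
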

\begin{proof} 
	Let $H$ be a separable Hilbert space and $U:H\rightarrow H\otimes H$ be a unitary. Fixed a unit vector $h_0\in H$ and let $P: H\rightarrow H, h\mapsto \langle h, h_0 \rangle h_0$. Then $P$ is a rank-one projection. Recall that the algebra $C_{f}^{\ast}(X\times Y, A)$ acts on $\ell^2(Z_X)\otimes \ell^2(Z_Y) \otimes H\otimes H_A$ and the algebra $C^{\ast}_{f}(X, C^{\ast}(Y, A))$ acts on $\ell^2(Z_X)\otimes H \otimes \ell^2(Z_Y) \otimes H \otimes H_A$, where $Z_X$ and $Z_Y$ are two countable dense subsets in $X$ and $Y$, respectively. Define two $\ast$-homomorphisms:
	{\small\begin{equation*}
			\begin{split}
				&\phi: C^{\ast}_f(X, C^{\ast}(Y, A)) \rightarrow C_{f}^{\ast}(X\times Y, A), T_{(x,y),(x',y')} \mapsto (U^{\ast}\otimes I_{H_A})T_{(x, y),(x', y')}(U\otimes I_{H_A});\\
				& \psi: C_{f}^{\ast}(X\times Y, A)\rightarrow C^{\ast}_f(X, C^{\ast}(Y, A)), S_{(x,y),(x',y')}\mapsto S_{(x,y),(x'y')}\otimes P.
			\end{split}
	\end{equation*}}
	Then the homomorphism $\phi\circ\psi: C_{f}^{\ast}(X\times Y, A)\rightarrow C_{f}^{\ast}(X\times Y, A)$ satisfies that $$\phi\circ\psi(S)_{(x,y),(x',y')}=(U^{\ast}V\otimes I_{H_A})S_{(x,y),(x',y')}(V^{\ast}U\otimes I_{H_A}),$$ 
	where $V: H\rightarrow H\otimes H$ maps $h$ to $h\otimes h_0$. Since the operator $I_{\ell^2(Z_X)\otimes \ell^2(Z_Y)}\otimes U^{\ast}V\otimes I_{H_A}$ is an isometry in the multiplier algebra of $C^{\ast}_f(X\times Y, A)$. Thus, by Lemma \ref{isometryequivalent}, $\phi_{\ast}\circ\psi_{\ast}$ is the identity homomorphism on $K_{\ast}(C_{f}^{\ast}(X\times Y, A))$. Moreover, we can similarly define two homomorphisms between $C^{\ast}_{L,f}(X, C^{\ast}(Y, A))$ and $C_{L,X,f}^{\ast}(X\times Y, A)$, and obtain the similar result as above. Therefore, we have the following commutative diagram:
	$$\xymatrix{
		\lim_{k\rightarrow \infty}K_{\ast}(C^{\ast}_{L, P_k(N_X), f}(P_k(N_X)\times Y, A)) \ar[d]_{\psi_{\ast}} \ar[r]^{e_{\ast}} & 
		\lim_{k\rightarrow \infty}K_{\ast}(C_{f}^{\ast}(P_k(N_X)\times Y, A)) \ar[d]^{\psi_{\ast}} & 
		\\
		\lim_{k\rightarrow \infty}K_{\ast}(C^{\ast}_{L, f}(P_k(N_X), C^{\ast}(Y, A)))\ar[d]_{\phi_{\ast}} \ar[r]^{e_{\ast}} &
		\lim_{k\rightarrow \infty}K_{\ast}(C^{\ast}_{f}(P_k(N_X), C^{\ast}(Y, A)))\ar[d]^{\phi_{\ast}}
		\\
		\lim_{k\rightarrow \infty}K_{\ast}(C^{\ast}_{L, P_k(N_X), f}(P_k(N_X)\times Y, A)) \ar[r]^{e_{\ast}} & 
		\lim_{k\rightarrow \infty}K_{\ast}(C_{f}^{\ast}(P_k(N_X)\times Y, A)), 
	}$$
	where $N_X$ is a locally finite net in $X$. Since the compositions of two vertical homomorphisms are all the identity homomorphism. Thus if the middle horizontal map is an isomorphism, then the top horizontal map is injective and the bottom horizontal map is surjective. This implies the coarse Baum-Connes conjecture along $X$ with filtered coefficients in $A$ holds for $X\times Y$.
\end{proof}

\begin{remark}
	For two proper metric spaces $X$ and $Y$, the algebra $C^{\ast}_f(X\times Y, A)$ is generally not isomorphic to the algebra $C^{\ast}_f(X, C^{\ast}(Y, A))$. For example, If both $X$ and $Y$ are the separated coarse union of a sequence of single points and $A=\C$, then $C^{\ast}_f(X\times Y, A)=\ell^{\infty}(X, \ell^{\infty}(Y, \K(H)))$ and $C^{\ast}_f(X, C^{\ast}(Y, A))=\ell^{\infty}(X, \K(H)\otimes \ell^{\infty}(Y, \K(H)))$ that are not isomorphic.   
\end{remark}

Finally, we simplify the consistent coarse Baum-Connes conjecture with filtered coefficients in this subsection.

\begin{lemma}\label{uniCBC-CBCalongX}
	Let $X$ be a proper metric space, $\sqcup_{\N} X$ be the separated coarse union of $X$. Then $X$ satisfies the consistent coarse Baum-Connes conjecture with filtered coefficients in $A$ if and only if $X\times \square$ satisfies the coarse Baum-Connes conjecture along $X$ with filtered coefficients in $A$, where $\square$ is the separated coarse union of a sequence of single points.
\end{lemma}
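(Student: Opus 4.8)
The plan is to show that, after the obvious identification of metric spaces, the coarse Baum--Connes conjecture along $X$ with filtered coefficients for $X\times\square$ and the consistent coarse Baum--Connes conjecture with filtered coefficients for $X$ are \emph{the same statement} about the same evaluation map; the equivalence is then purely formal.

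First I would record the elementary geometry of $\square$. Writing $\square=\sqcup_{i\in\N}\{p_i\}$ with $d(p_i,p_j)=\infty$ for $i\neq j$, the space $\square$ is proper, it is its own locally finite net, and $P_k(\square)=\square$ for every $k\geq 0$. Using the product (max) metric, for each $k$ the identity map is an isometric bijection
$$P_k(N_X)\times\square\;=\;\sqcup_{\N}\bigl(P_k(N_X)\bigr),$$
the separated coarse union of countably many copies of $P_k(N_X)$: two points in different copies are at distance $\infty$, and within one copy the metric is $d_{P_k}$. This is exactly the space that appears in Conjecture~\ref{Def-uniform-CBC}. I would also fix the product $Z_{P_k(N_X)}\times\square$ as dense subset of $P_k(N_X)\times\square$, so that both sides act on the same Hilbert space; by the independence of the Roe algebra of the choice of dense subset (the corollary after Lemma~\ref{covering-isometry}) this costs nothing in $K$-theory.

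Next I would match the $C^{\ast}$-algebras. Because of the max metric, every operator on $P_k(N_X)\times\square$ of finite propagation is automatically block diagonal along the $\square$-direction, with blocks $T_i$ of uniformly bounded propagation acting on $\ell^2(Z_{P_k(N_X)})\otimes H\otimes H_A$, and its total propagation equals $\prop_{P_k(N_X)}(T)=\sup_i\prop(T_i)$. Since a compact subset of $\sqcup_\N P_k(N_X)$ meets only finitely many components, each in a compact set, the local-compactness-with-uniform-filtration condition of Definition~\ref{Def-Roe-filcoeff} becomes exactly membership in $\prod_i C^{\ast}_f(P_k(N_X),A)_r$ for a uniform $r$. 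Comparing with the description of the Roe algebra of a separated coarse union given after Conjecture~\ref{Def-uniform-CBC}, one gets a natural $\ast$-isomorphism $C^{\ast}_f(P_k(N_X)\times\square,A)\cong C^{\ast}_f(\sqcup_\N P_k(N_X),A)$; and since $\prop_{P_k(N_X)}=\prop$ on this algebra, the defining condition $\lim_{t\to\infty}\prop_{P_k(N_X)}(u(t))=0$ of $C^{\ast}_{L,P_k(N_X),f}(P_k(N_X)\times\square,A)$ is literally the condition $\lim_{t\to\infty}\sup_i\prop(u_i(t))=0$ defining $C^{\ast}_{L,f}(\sqcup_\N P_k(N_X),A)$, so the two localization algebras coincide as well. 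All these identifications are compatible with evaluation at zero and with the structure maps over $k$, which on both sides are induced copy-wise by the inclusions $P_k(N_X)\hookrightarrow P_{k'}(N_X)$ (Lemmas~\ref{covering-isometry} and~\ref{continuous-covering-isometry}). Passing to $K$-theory and to $\lim_k$, we obtain a commuting square
$$\small\xymatrix{
\lim_{k}K_{\ast}(C^{\ast}_{L,f}(\sqcup_\N P_k(N_X),A))\ar[r]^{e_{\ast}}\ar[d]_{\cong} & \lim_{k}K_{\ast}(C^{\ast}_{f}(\sqcup_\N P_k(N_X),A))\ar[d]^{\cong} \\
\lim_{k}K_{\ast}(C^{\ast}_{L,P_k(N_X),f}(P_k(N_X)\times\square,A))\ar[r]^{e_{\ast}} & \lim_{k}K_{\ast}(C^{\ast}_{f}(P_k(N_X)\times\square,A))
}$$
whose vertical arrows are isomorphisms; hence the top $e_{\ast}$ is an isomorphism if and only if the bottom one is, which is precisely the asserted equivalence.

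The only step that needs genuine care is the bookkeeping in the third paragraph: verifying that finite propagation in the max metric really forces the block-diagonal form with \emph{uniform} propagation bounds, and that the local-compactness condition of Definition~\ref{Def-Roe-filcoeff} for the product matches the $\prod_i$-with-uniform-filtration description of the Roe algebra of a separated coarse union. This is routine once set against the definitions, and everything else is formal. I would also point out that the argument uses nothing about $X$, so the equivalence holds for every proper metric space $X$.
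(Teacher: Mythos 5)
Your proposal is correct and follows essentially the same route as the paper: identify $X\times\square$ with $\sqcup_{\N}X$ (and $P_k(N_X)\times\square$ with $\sqcup_{\N}P_k(N_X)$), observe that $\prop_{X}=\prop$ on this algebra so the localization algebra along $X$ coincides with the ordinary localization algebra of the separated coarse union, and conclude by matching the two evaluation maps. Your version merely spells out the block-diagonal bookkeeping and the commuting square that the paper leaves implicit.
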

\begin{proof}
	Firstly, we have that $\sqcup_{\N} X=X\times \square$. Thus 
	$$C_{f}^{\ast}(\sqcup_{\N}X, A)=C_{f}^{\ast}(X\times \square, A);\:\:C_{L,f}^{\ast}(\sqcup_{\N}X, A)=C_{L,f}^{\ast}(X\times \square, A).$$
	Secondly, for an operator $T\in C_{f}^{\ast}(X\times \square, A)$, we have that $\prop_{X}(T)=\prop(T)$. Thus 
	$$C^{\ast}_{L,f}(X\times \square, A)=C^{\ast}_{L, X, f}(X\times \square, A).$$
	Finally, for a locally finite net $N_X$ in $X$, we have that $P_k(\sqcup_{\N} X)=\sqcup_{\N} P_k(X)$ for any non-negative integer $k$. Thus we have
	\begin{equation*}
		\begin{split}
			& C^{\ast}_{L, f}(P_{k}(\sqcup_{\N} X), A)=C^{\ast}_{L, P_{k}(X), f}(P_{k}(X)\times \square, A);\\
			& C_{f}^{\ast}(P_{k}(\sqcup_{\N} X), A)=C_{f}^{\ast}(P_{k}(X)\times \square, A),
		\end{split}
	\end{equation*}
	which completes the proof.
\end{proof}

\begin{definition}\cite[Definition 5.14]{OyonoYu2019} \label{Def-uniform-product}
	Let $\{A^{i}\}_{i\in \mathcal{I}}$ be a family of filtered $C^{\ast}$-algebra. The \textit{uniform product} of $\{A^{i}\}_{i\in \mathcal{I}}$, denoted by $\prod^{u}_{i\in \mathcal{I}} A^{i}$, is defined to be the norm closure of the set $\bigcup_{r>0}(\prod_{i\in \mathcal{I}} A^{i}_r)$ in the direct product $\prod_{i\in \mathcal{I}} A^{i}$. 
\end{definition}

\begin{remark}
	Let $\Gamma$ be a finitely generated group equipped with the word length metric. Then the Roe algebra $C^{\ast}(\Gamma)$ is $\ast$-isomorphic to the reduced crossed product $\ell^{\infty}(\Gamma, \K(H)) \rtimes_{r} \Gamma$ (please see \cite[Proposition 5.1.3]{Brown-Ozawa-book} for the proof). By a similar argument, the Roe algebra $C^{\ast}_{f}(\Gamma, A)$ with filtered coefficients in $A$ is $\ast$-isomorphic to the reduced crossed product $(\prod^{u}_{\Gamma} (A\otimes \K(H))) \rtimes_{r} \Gamma$, where $A\otimes \K(H)$ is a filtered $C^{\ast}$-algebra equipped with the filtration $(A_r\otimes \K(H))_{r>0}$.
\end{remark}

Let $A$ be a filtered $C^{\ast}$-algebra. Then, the uniform product $\prod^{u}_{\N} (A\otimes \K(H))$ is a filtered $C^{\ast}$-algebra equipped with the filtration $(\prod_{\N} (A_r\otimes \K(H)))_{r>0}$. And we have that 
$$C^{\ast}_{f}(\square, A)=\prod^{u}_{\N} (A\otimes \K(H)),$$
where $\square$ is the separated coarse union of a sequence of single points.

Thus, combing Lemma \ref{uniCBC-CBCalongX} with Proposition \ref{Prop-CBC-X-filcoeff}, we obtain the following corollary.

\begin{corollary}\label{Cor-CBCFC-UCBC}
	Let $X$ be a proper metric space. If the coarse Baum-Connes conjecture with filtered coefficients in $\prod^{u}_{\N} (A\otimes \K(H))$ holds for $X$, then the consistent coarse Baum-Connes conjecture with filtered coefficients in $A$ holds for $X$. 
\end{corollary}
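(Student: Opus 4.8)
The plan is to read off the corollary by composing the two results that immediately precede it. In brief: the hypothesis, phrased in terms of the uniform product, is the same as the coarse Baum-Connes conjecture with filtered coefficients in the Roe algebra $C^{\ast}_{f}(\square, A)$; Proposition \ref{Prop-CBC-X-filcoeff} then promotes this to the coarse Baum-Connes conjecture along $X$ with filtered coefficients in $A$ for $X\times\square$; and Lemma \ref{uniCBC-CBCalongX} translates the latter into the consistent coarse Baum-Connes conjecture with filtered coefficients in $A$ for $X$.

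Concretely, I would proceed in three steps. \emph{Step 1.} Record the identification $C^{\ast}_{f}(\square, A)=\prod^{u}_{\N}(A\otimes\K(H))$ as filtered $C^{\ast}$-algebras: by Lemma \ref{filtration-Roe-algebra} the Roe algebra $C^{\ast}_{f}(\square, A)$ carries the filtration by propagation together with the $A$-degree, and since $\square$ is a separated coarse union of single points every propagation condition is vacuous, so this filtration collapses to $(\prod_{\N}(A_r\otimes\K(H)))_{r>0}$, which is exactly the filtration on the uniform product of the constant family $A^{i}=A\otimes\K(H)$ from Definition \ref{Def-uniform-product}. Hence the hypothesis that $X$ satisfies the coarse Baum-Connes conjecture with filtered coefficients in $\prod^{u}_{\N}(A\otimes\K(H))$ is literally the statement that $X$ satisfies the conjecture with filtered coefficients in $C^{\ast}_{f}(\square, A)$.

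\emph{Step 2.} Apply Proposition \ref{Prop-CBC-X-filcoeff} with $Y=\square$. Its hypothesis is precisely the conclusion of Step 1, so we obtain that the coarse Baum-Connes conjecture along $X$ with filtered coefficients in $A$ holds for $X\times\square$. \emph{Step 3.} Apply Lemma \ref{uniCBC-CBCalongX}, which asserts that $X\times\square$ satisfying the coarse Baum-Connes conjecture along $X$ with filtered coefficients in $A$ is equivalent to $X$ satisfying the consistent coarse Baum-Connes conjecture with filtered coefficients in $A$. Combining Steps 1--3 yields the corollary.

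There is essentially no obstacle here beyond bookkeeping; the one point to state carefully is the filtration identification in Step 1, since the coarse Baum-Connes conjecture with filtered coefficients depends on the chosen filtration of the coefficient algebra, and one must check that the two natural filtrations on $C^{\ast}_{f}(\square,A)$ --- the Roe-algebra one of Lemma \ref{filtration-Roe-algebra} and the uniform-product one of Definition \ref{Def-uniform-product} --- actually coincide rather than merely being coarsely comparable. Once that is verified, Steps 2 and 3 are direct quotations of the cited results.
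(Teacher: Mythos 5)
Your proposal is correct and follows exactly the paper's route: the paper likewise identifies $C^{\ast}_{f}(\square, A)=\prod^{u}_{\N}(A\otimes\K(H))$ with its filtration $(\prod_{\N}(A_r\otimes\K(H)))_{r>0}$ and then combines Proposition \ref{Prop-CBC-X-filcoeff} (with $Y=\square$) and Lemma \ref{uniCBC-CBCalongX}. Your Step 1, spelling out that the Roe-algebra filtration on $C^{\ast}_{f}(\square,A)$ coincides with the uniform-product filtration, is the same bookkeeping point the paper records just before stating the corollary.
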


\begin{remark}
	In \cite{Zhang-quan-CBC}, the authors introduced the quantitative coarse Baum-Connes conjecture with coefficients and proved that this conjecture is equivalent to the consistent coarse Baum-Connes conjecture with coefficients. Thus by the above corollary, the coarse Baum-Connes conjecture with filtered coefficients implies that conjecture.
\end{remark}

\subsection{Main theorems}

\begin{definition}\label{Def-bounded-geometry}
	A locally finite metric space $N$ is called to have \textit{bounded geometry}, if $\sup_{x\in N} \# B(x, R)$ is finite for each $R>0$, where $\# B(x, R)$ is the cardinality of the $R$-ball centered at $x$. A proper metric space $X$ is called to have \textit{bounded geometry}, if there exists a net $N_X$ with bounded geometry.
\end{definition}

Firstly, let us discuss the Rips complexes of product metric spaces.

\begin{lemma}\label{lefthand-CBC-product}
	Let $N_1$ and $N_2$ be two locally finite metric spaces with bounded geometry, then $P_k(N_1\times N_2)$ is strongly Lipschitz homotopy equivalent to and coarsely equivalent to $P_{k}(N_1)\times P_{k}(N_2)$ for any $k\in \N$.
\end{lemma}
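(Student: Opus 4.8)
The plan is to construct explicit mutually inverse (up to strong Lipschitz homotopy and coarse equivalence) maps between $P_k(N_1\times N_2)$ and $P_k(N_1)\times P_k(N_2)$, and to verify the required metric estimates directly from the definition of the modified Rips metric $d_{P_k}$. The key point to understand first is the relationship between the simplices of $P_k(N_1\times N_2)$ and those of the product: if $\{(z_0,w_0),\dots,(z_n,w_n)\}$ spans a simplex in $P_k(N_1\times N_2)$, then by definition $d_{N_1}(z_i,z_j)\le k$ and $d_{N_2}(w_i,w_j)\le k$ for all $i,j$ (using the max metric on $N_1\times N_2$), so $\{z_0,\dots,z_n\}$ spans a simplex in $P_k(N_1)$ and likewise for the $w$'s. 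This gives a natural simplicial-type map $\Phi\colon P_k(N_1\times N_2)\to P_k(N_1)\times P_k(N_2)$ sending $\sum t_i(z_i,w_i)$ to $(\sum t_i z_i,\sum t_i w_i)$.

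First I would define $\Phi$ as above and check it is well defined and Lipschitz for the $d_{P_k}$ metrics (using that $d_{N_1}$ and $d_{N_2}$ are dominated by the product metric, and the analogous domination at the level of spherical metrics on simplices). Next I would construct a map in the other direction. Here the obstacle is that a pair $(\sum s_i z_i,\sum s'_j w_j)$ in $P_k(N_1)\times P_k(N_2)$ involves two independent barycentric decompositions of possibly different lengths, and the naive ``product'' point $\sum_{i,j} s_i s'_j (z_i,w_j)$ need not lie in $P_k(N_1\times N_2)$ unless all the pairs $(z_i,w_j)$ together span a simplex — which they do, precisely because $d_{N_1}(z_i,z_{i'})\le k$ and $d_{N_2}(w_j,w_{j'})\le k$ force $d((z_i,w_j),(z_{i'},w_{j'}))\le k$ in the max metric. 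So the map $\Psi\colon P_k(N_1)\times P_k(N_2)\to P_k(N_1\times N_2)$, $(\sum s_i z_i,\sum s'_j w_j)\mapsto \sum_{i,j} s_i s'_j (z_i,w_j)$, is well defined; I would check it is Lipschitz for $d_{P_k}$ as well.

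Then I would show $\Phi\circ\Psi$ and $\Psi\circ\Phi$ are strongly Lipschitz homotopic to the respective identity maps, in the sense of Definition \ref{Def-strLip}. For $\Phi\circ\Psi$ this is almost immediate since $\sum_{i,j}s_is'_j z_i=\sum_i s_i z_i$ and similarly in the second coordinate, so $\Phi\circ\Psi=\mathrm{id}$ on the nose. For $\Psi\circ\Phi$ one gets from $\sum t_i(z_i,w_i)$ the point $\sum_{i,j}t_it_j(z_i,w_j)$, and the straight-line homotopy $F(u,\cdot)$ interpolating barycentric coordinates $u\cdot(t_it_j)+(1-u)\cdot(t_i\delta_{ij})$ stays inside the simplex spanned by $\{(z_i,w_j)\}$, hence inside $P_k(N_1\times N_2)$; I would verify the coarseness and uniform-continuity conditions of Definition \ref{Def-strLip} for this homotopy, using bounded geometry to control the metric. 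Finally, for coarse equivalence I would note that $\Phi$ and $\Psi$ are coarse maps with $\Phi\circ\Psi$ and $\Psi\circ\Phi$ uniformly close to the identities (bounded displacement, again using the explicit barycentric formulas and the definition of $d_{P_k}$), which is exactly the criterion in Definition \ref{Def-coarse-equi}. The main obstacle I anticipate is the bookkeeping for the metric estimates on $d_{P_k}$: since $d_{P_k}$ is defined as an infimum over chains alternating between spherical-metric steps inside simplices and $d_N$-steps between vertices, I will need to show that an optimal (or near-optimal) chain in the target can be pulled back to a comparable chain in the source under $\Phi$ and $\Psi$, and conversely — this is where bounded geometry and the interplay between the max metric on $N_1\times N_2$ and the separate metrics on $N_1,N_2$ do the real work.
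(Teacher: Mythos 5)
Your proposal is correct and follows essentially the same route as the paper: your $\Phi$ and $\Psi$ are exactly the paper's maps $\rho$ and $\varphi$, with the same observations that $\Phi\circ\Psi=\mathrm{id}$, that $\Psi\circ\Phi$ is linearly (straight-line in barycentric coordinates) homotopic to the identity inside the simplex spanned by the pairs $(z_i,w_j)$, and that both compositions displace points by a bounded amount. The only detail worth making explicit when you carry out the estimates is that bounded geometry enters precisely by bounding the dimension of the Rips complexes, which is what controls the Lipschitz constant of $\Phi$ (the paper gets $4m_k^2$ where $m_k$ bounds the dimension).
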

\begin{proof}
	Since $N_1$ and $N_2$ have bounded geometry, thus the dimensions of simplicial complexes $P_{k}(N_1)$ and $P_{k}(N_2)$ are less than some positive integer $m_k$.
	Define two maps:
	$$\rho: P_{k}\left(N_1\times N_2\right)\rightarrow P_{k}\left(N_1\right)\times P_{k}\left(N_2\right)$$ 
	by
	$$\sum_{i,j} t_{ij}\left(x_i, y_j\right)\mapsto \left(\sum_i \left(\sum_j t_{ij}\right)x_i, \sum_j \left(\sum_i t_{ij}\right)y_j\right),$$
	and 
	$$\varphi: P_{k}(N_1)\times P_{k}(N_2) \rightarrow P_{k}(N_1\times N_2)$$
	by
	$$\left(\sum_{i}t_i x_i, \sum_{j}s_j y_j\right) \mapsto \sum_{i, j}t_{i}s_{j}\left(x_i, y_j\right),$$
	where $x_i\in N_1$ and $y_j\in N_2$.
	Then we have that
	$$d_{P_k}(\rho(x),\rho(y))\leq 4m_k^2 d_{P_k}(x,y);\:\: d_{P_k}(\varphi(z), \varphi(w))\leq 2d_{P_k}(z,w)$$ 
	for any $x, y\in P_k(N_1\times N_2)$ and $z, w\in P_{k}(N_1)\times P_{k}(N_2)$ by direct computations, which imply that $\rho$ and $\varphi$ are two coarse and Lipschitz maps. Moreover, $\rho\circ\varphi=id$ and $\varphi\circ\rho$ is strongly Lipschitz homotopic to the identity map by the linear combination of them. Thus, $P_k(N_1\times N_2)$ is strongly Lipschitz homotopy equivalent to $P_{k}(N_1)\times P_{k}(N_2)$. And we have that $d(\rho\circ\varphi(z), z)=0$ and $d(\varphi\circ\rho(x), x)\leq 1$, thus $P_k(N_1\times N_2)$ is coarsely equivalent to $P_{k}(N_1)\times P_{k}(N_2)$.
\end{proof}

\begin{corollary}\label{CBC-product-sepera}
	Let $X$, $Y$ be two proper metric spaces with bounded geometry and $A$ be a filtered $C^{\ast}$-algebra. Then $X\times Y$ satisfies the coarse Baum-Connes conjecture with filtered coefficients in $A$ if and only if the following map
	$$\lim_{k\rightarrow \infty}K_{\ast}(C^{\ast}_{L,f}(P_{k}(N_X)\times P_{k}(N_Y), A)) \stackrel{e_{\ast}}{\longrightarrow} \lim_{k\rightarrow \infty}K_{\ast}(C^{\ast}_{f}(P_{k}(N_X)\times P_{k}(N_Y), A)).$$
	is an isomorphism, where $N_X$ and $N_Y$ are two nets with bounded geometry in $X$ and $Y$, respectively. 
\end{corollary}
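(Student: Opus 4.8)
The plan is to replace $X\times Y$ by a product of nets and then transport the conjecture with filtered coefficients along the strong Lipschitz homotopy equivalence and coarse equivalence supplied by Lemma~\ref{lefthand-CBC-product}. First I would fix nets $N_X\subseteq X$ and $N_Y\subseteq Y$ with bounded geometry, say maximal $1$-separated subsets, so that $N:=N_X\times N_Y$ (carrying the max metric) is a locally finite net of $X\times Y$; it has bounded geometry because $\#B_N\big((z,w),R\big)=\#B_{N_X}(z,R)\cdot\#B_{N_Y}(w,R)$, and it is coarsely equivalent to $X\times Y$. Since the coarse Baum--Connes conjecture with filtered coefficients is independent of the chosen net and invariant under coarse equivalence (the Remark after Conjecture~\ref{CBC}, resting on Corollary~\ref{coarse-equi-isomor} on the Roe side and on Lemma~\ref{continuous-covering-isometry} on the localization side), $X\times Y$ satisfies the conjecture with filtered coefficients in $A$ if and only if
$$e_{\ast}\colon \lim_{k\to\infty}K_{\ast}\big(C^{\ast}_{L,f}(P_k(N),A)\big)\longrightarrow \lim_{k\to\infty}K_{\ast}\big(C^{\ast}_{f}(P_k(N),A)\big)$$
is an isomorphism.

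Next, for each $k$ I would compare $P_k(N)=P_k(N_X\times N_Y)$ with $P_k(N_X)\times P_k(N_Y)$. Lemma~\ref{lefthand-CBC-product} provides Lipschitz coarse maps $\rho_k\colon P_k(N)\to P_k(N_X)\times P_k(N_Y)$ and $\varphi_k\colon P_k(N_X)\times P_k(N_Y)\to P_k(N)$ with $\rho_k\circ\varphi_k=id$, with $\varphi_k\circ\rho_k$ strongly Lipschitz homotopic to $id$, and with $d(\varphi_k\circ\rho_k(x),x)\leq 1$. Being Lipschitz they are uniformly continuous, so $ad_{\rho_k}$, $Ad_{\rho_k}$ and the corresponding maps for $\varphi_k$ are defined; by Corollary~\ref{coarse-equi-isomor} and Lemma~\ref{covering-isometry}, $ad_{\rho_k,\ast}$ and $ad_{\varphi_k,\ast}$ are mutually inverse isomorphisms on $K_{\ast}\big(C^{\ast}_f(-,A)\big)$, and by Lemma~\ref{Lip-homotopy-invaraint}, $Ad_{\rho_k,\ast}$ and $Ad_{\varphi_k,\ast}$ are mutually inverse isomorphisms on $K_{\ast}\big(C^{\ast}_{L,f}(-,A)\big)$.

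It remains to make these comparisons coherent in $k$ and compatible with evaluation at $0$. The maps $\rho_k$ and $\varphi_k$ are given by the \emph{same} barycentric-coordinate formulas for every $k$, so they commute on the nose with the canonical inclusions $i_{k,k'}$ of Rips complexes and with $i_{k,k'}\times i_{k,k'}$; combined with the uniqueness-up-to-$K$-theory clauses in Lemmas~\ref{covering-isometry} and~\ref{continuous-covering-isometry}, this shows that $\{ad_{\rho_k,\ast}\}_k$ and $\{Ad_{\rho_k,\ast}\}_k$ are isomorphisms of inductive systems, hence isomorphisms on the limits. For the evaluation map, note that the path of isometries $V_{\rho_k}$ defining $Ad_{\rho_k}$ in Lemma~\ref{continuous-covering-isometry} has $V_{\rho_k}(0)$ an isometry satisfying condition~\eqref{*}, so by the last clause of Lemma~\ref{covering-isometry} conjugation by $V_{\rho_k}(0)$ induces on $K_{\ast}(C^{\ast}_f)$ the same map as $ad_{\rho_k}$; since $e\big(Ad_{\rho_k}(u)\big)=V_{\rho_k}(0)u(0)V_{\rho_k}(0)^{\ast}$, this gives $e_{\ast}\circ Ad_{\rho_k,\ast}=ad_{\rho_k,\ast}\circ e_{\ast}$. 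Passing to the limit yields a commutative square
$$\xymatrix{
\lim_{k}K_{\ast}(C^{\ast}_{L,f}(P_k(N),A)) \ar[r]^{e_{\ast}}\ar[d]_{\cong} & \lim_{k}K_{\ast}(C^{\ast}_{f}(P_k(N),A)) \ar[d]^{\cong}\\
\lim_{k}K_{\ast}(C^{\ast}_{L,f}(P_k(N_X)\times P_k(N_Y),A)) \ar[r]^{e_{\ast}} & \lim_{k}K_{\ast}(C^{\ast}_{f}(P_k(N_X)\times P_k(N_Y),A))
}$$
whose vertical arrows are isomorphisms. Hence the top $e_{\ast}$ is an isomorphism if and only if the bottom one is, and together with the first paragraph this is exactly the statement of the corollary.

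I expect the main obstacle to be precisely the bookkeeping of the third paragraph rather than any $K$-theory computation: arranging the equivalences $\rho_k,\varphi_k$ so that they are coherent in $k$ and compatible with evaluation at $0$, so that everything descends to the inductive limits and the displayed square really commutes. The remaining ingredients — the bounded-geometry check for $N$, the coarse and homotopy invariance, and the comparison of the Rips complexes — are either routine or already recorded in Lemma~\ref{lefthand-CBC-product} and Lemmas~\ref{covering-isometry}, \ref{Lip-homotopy-invaraint} and~\ref{continuous-covering-isometry}.
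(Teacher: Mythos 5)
Your proposal is correct and follows essentially the same route as the paper: reduce to the net $N_X\times N_Y$, use the maps $\rho$ and $\varphi$ from Lemma \ref{lefthand-CBC-product} to compare $P_k(N_X\times N_Y)$ with $P_k(N_X)\times P_k(N_Y)$, check compatibility with the inclusions $i_{kk'}$ and with $e_\ast$, and conclude via Corollary \ref{coarse-equi-isomor} and Lemma \ref{Lip-homotopy-invaraint}. The paper's proof is just a terser version of your third paragraph (it records only the commutative square with $\varphi$ and cites the same three results), so the extra bookkeeping you supply is consistent with, and fills in, what the paper leaves implicit.
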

\begin{proof}
	For any $k'\geq k$, we have the following commutative diagram:
	$$\xymatrix{
		P_k(N_X)\times P_k(N_Y) \ar[r]^{i_{kk'}}\ar[d]_{\varphi} & P_{k'}(N_X)\times P_{k'}(N_Y) \ar[d]^{\varphi} 
		\\
		P_k(N_X\times N_Y) \ar[r]_{i_{kk'}} & P_{k'}(N_X\times N_Y),
	}$$
	where $\varphi$ is defined in the proof of the above lemma.
	
	Then we further have the following commutative diagram:
	{\small$$\xymatrix{
			\lim_{k\rightarrow \infty}K_{\ast}(C^{\ast}_{L,f}(P_{k}(N_X)\times P_{k}(N_Y), A)) \ar[r]^{e_{\ast}}\ar[d]_{Ad_{\varphi,\ast}} & \lim_{k\rightarrow \infty}K_{\ast}(C_f^{\ast}(P_{k}(N_X)\times P_{k}(N_Y), A)) \ar[d]^{ad_{\varphi,\ast}} 
			\\
			\lim_{k\rightarrow \infty}K_{\ast}(C^{\ast}_{L,f}(P_{k}(N_X\times N_Y), A)) \ar[r]_{e_{\ast}} & \lim_{k\rightarrow \infty}K_{\ast}(C_f^{\ast}(P_{k}(N_X\times N_Y), A)).
		}$$}
	Thus, the proof is completed by combining Corollary \ref{coarse-equi-isomor}, Lemma \ref{Lip-homotopy-invaraint} and the above lemma.
\end{proof}

Secondly, we give the following lemma which plays a crucial role in the proof of main theorems. 

Let $X$, $Y$ be two proper metric spaces and $A$ be a filtered $C^{\ast}$-algebra. Then we have the following inclusion map:
$$\tau: C^{\ast}_{L,f}(X\times Y, A) \rightarrow C^{\ast}_{L,X,f}(X\times Y, A).$$

\begin{lemma}\label{localg-fil-localg}
	Assume $X$ has bounded geometry. Let $N_{X}$ be a net with bounded geometry in $X$ and $N_{Y}$ be a locally finite net in $Y$. If $Y$ satisfies the consistent coarse Baum-Connes conjecture with filtered coefficients in $A$, then for each $k>0$
	{\footnotesize$$\tau_{\ast}: \lim_{d\rightarrow \infty} K_{\ast}\left(C^{\ast}_{L,f}\left(P_{k}\left(N_X \right)\times P_{d}\left(N_Y \right), A \right)\right) \rightarrow \lim_{d\rightarrow \infty} K_{\ast}(C^{\ast}_{L,P_{k}(N_X),f}(P_{k}(N_X)\times P_{d}(N_Y), A))$$}
	is an isomorphism.  
\end{lemma}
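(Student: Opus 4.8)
The plan is to compare the two algebras by dissecting the fixed factor $P_{k}(N_X)$ --- which, by the bounded geometry of $N_X$, is a simplicial complex of some finite dimension $m_k$ --- into finitely many pieces on which the comparison is transparent, and then to reassemble the general statement by a Mayer--Vietoris induction over that dissection. Here $k$ is fixed, all $K$-theory is taken in the inductive limit over the Rips parameter $d$ of $Y$, and one first checks that $\tau$ is compatible with the maps induced by $Id\times i_{dd'}$, so that it does descend to a map between these inductive limits.

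\emph{The base case.} I would first treat $X'=\square$, the separated coarse union of single points. Finite propagation with respect to the product metric $\max\{d_{X'},d_Y\}$ forces block-diagonality over the pairwise infinitely separated points of $\square$, so every $T\in C^{\ast}_{f}(\square\times Y',A)$ has $\prop_{X'}(T)=0$; hence $C^{\ast}_{L,X',f}(\square\times Y',A)=C_{ub}([0,\infty),C^{\ast}_{f}(\square\times Y',A))$. By Lemma \ref{Roe-quasi-stable} this coefficient algebra is quasi-stable, so evaluation at $0$ induces a $K$-theory isomorphism by Lemma \ref{continuous-flow-K}, and under that identification $\tau$ followed by evaluation at $0$ is exactly the evaluation map $e$. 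Taking $Y'=P_{d}(N_Y)$ and using $\square\times P_{d}(N_Y)=\sqcup_{\N}P_{d}(N_Y)=P_{d}(\sqcup_{\N}N_Y)$, the induced map on $\lim_{d\rightarrow\infty}K_{\ast}$ is precisely the map $e_{\ast}$ of the consistent coarse Baum--Connes conjecture with filtered coefficients for $Y$ (Conjecture \ref{Def-uniform-CBC}), which is an isomorphism by hypothesis; since evaluation at $0$ is an isomorphism, so is $\tau_{\ast}$. Call a metric space an \emph{elementary piece} if it is a (possibly finite) separated coarse union of uniformly bounded contractible spaces; by collapsing each summand to a point along a uniformly Lipschitz contraction it is strongly Lipschitz homotopy equivalent to a separated coarse union of single points, for which the conclusion just proved holds (the finitely many summands case follows since the corresponding subalgebra is a direct summand cut out by a central projection). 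Hence, via Lemma \ref{Lip-homotopy-invaraint} and Lemma \ref{Lem-Liphomo-localg-along-X} --- which $\tau$ respects --- the lemma holds with any elementary piece in place of $\square$.

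\emph{Dissection and induction.} Next I would invoke the standard skeleton decomposition of a finite-dimensional Rips complex of bounded geometry, as used for localization algebras in \cite{WillettYu-Book} and \cite{Yu-Localizationalg}: a finite cover $P_{k}(N_X)=\bigcup_{i\in I}W_{i}$ built from the open stars of the barycenters of simplices in the barycentric subdivision after a finite coloring --- the coloring, and hence the finiteness of $I$, being exactly where bounded geometry enters --- such that every nonempty iterated intersection $\bigcap_{i\in J}W_{i}$ ($J\subseteq I$) is an elementary piece, and every such cover is uniformly excisive with $\varepsilon_{0}$-neighborhoods strongly Lipschitz homotopy equivalent to its members. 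Multiplying through by $P_{d}(N_Y)$ preserves all of this, since $B_{\varepsilon}(W\times P_{d}(N_Y))=B_{\varepsilon}(W)\times P_{d}(N_Y)$. I would then prove, by induction on the number $q$ of members of a cover all of whose iterated intersections are elementary pieces, that $\tau_{\ast}$ is an isomorphism on $\lim_{d\rightarrow\infty}K_{\ast}$: the case $q=1$ is the base case, and for the inductive step one sets $U=W_{1}\cup\cdots\cup W_{q-1}$, $V=W_{q}$, observes that $U$, $V$ and $U\cap V=\bigcup_{i=1}^{q-1}(W_{i}\cap W_{q})$ are each covered by at most $q-1$ elementary pieces whose iterated intersections are again elementary, applies the Mayer--Vietoris six-term exact sequences of Lemma \ref{MV-fil-localg} and Lemma \ref{Lem-MV-localg-along-X} (which $\tau$ intertwines), uses exactness of inductive limits, and concludes with the five lemma. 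For $q=|I|$ this gives the assertion for $P_{k}(N_X)\times P_{d}(N_Y)$.

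\emph{Main obstacle.} The substance of the proof is in the dissection: arranging the skeleton cover and coloring so that \emph{every} $W_{i}$ and \emph{every} iterated intersection is at once a separated coarse union of uniformly bounded contractible spaces (so the base case applies) and a member of a uniformly excisive cover whose small neighborhoods are strongly Lipschitz homotopy equivalent to the pieces (so the Mayer--Vietoris machinery applies), and then verifying that $\tau$ is compatible with all the resulting homotopy-invariance isomorphisms and Mayer--Vietoris connecting maps. These are standard coarse-geometric manipulations, but carrying them out carefully --- and uniformly in $d$ --- is where the real work lies; the rest is formal homological algebra.
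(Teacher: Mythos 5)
Your overall strategy is the same as the paper's: handle the zero-dimensional case by observing that ``propagation along $X$ tends to zero'' becomes vacuous, so that $C^{\ast}_{L,X,f}$ collapses to $C_{ub}([0,\infty),\cdot)$; kill the $[0,\infty)$-direction by quasi-stability (Lemmas \ref{Roe-quasi-stable} and \ref{continuous-flow-K}); identify what remains with the assembly map of the consistent conjecture for $Y$; and then propagate the statement over the finite-dimensional complex $P_k(N_X)$ by Mayer--Vietoris and strongly Lipschitz homotopy invariance. (The paper inducts on the dimension of the skeleton, splitting the $l$-skeleton into barycentric balls and their complements; your induction on the number of members of a colored star cover is an equivalent packaging of the same homological argument.)

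There is, however, one step that does not work as literally written. You define an elementary piece to be a \emph{separated} coarse union of uniformly bounded contractible sets, and your base case genuinely uses the infinite separation: it is only because distinct points of $\square$ are at infinite distance that finite propagation forces $\prop_{X'}(T)=0$, whence $C^{\ast}_{L,X',f}(\square\times Y',A)=C_{ub}([0,\infty),C^{\ast}_{f}(\square\times Y',A))$. But the pieces $W_i$ of any finite star cover of $P_k(N_X)$ (and likewise the set of barycenters in the paper's decomposition) are unions of uniformly bounded sets lying at \emph{finite} distances from one another, merely bounded below by some $c_0>0$; no finite cover of $P_k(N_X)$ can consist of separated coarse unions unless $P_k(N_X)$ already is one. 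So the $W_i$ are not elementary pieces in your sense, the ``arranging'' you defer to the final paragraph is impossible as stated, and the base case does not apply to them directly: an operator of propagation $R>c_0$ on $W_i\times Y$ need not be block diagonal over the summands of $W_i$. What is needed --- and what the paper also uses silently when it writes its $n=0$ diagram entirely in terms of $\sqcup_{\N}P_d(N_Y)$ --- is the extra observation that for the \emph{localization} algebras $C^{\ast}_{L,f}$ and $C^{\ast}_{L,X,f}$ the relevant propagation tends to zero, so after a reparametrization $u(t)\mapsto u(t+t_0)$ every element is homotopic to one that is block diagonal over the summands; hence on $K$-theory a $c_0$-separated union is indistinguishable from the corresponding separated coarse union, and only then does your base case take over. (This reduction is false for the Roe algebra itself, which is why the target of the evaluation map must be the uniform algebra $C^{\ast}_{f}(\sqcup_{\N}P_d(N_Y),A)$ of the consistent conjecture rather than $C^{\ast}_{f}(W_i\times P_d(N_Y),A)$.) With that patch inserted, your argument goes through and coincides in substance with the paper's.
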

\begin{proof}
	Since $N_X$ has bounded geometry, thus $P_k(N_X)$ is a finite-dimensional simplicial complex for each $k$. Let $P_k(N_X)^{(n)}$ be its $n$-dimensional skeleton. We will prove the lemma for $P_k(N_X)^{(n)}$ by induction on $n$.
	
	When $n=0$, we have the following commutative diagram:
	$$\xymatrix{
		\lim_{d\rightarrow \infty} K_{\ast}(C^{\ast}_{L,f}(\sqcup_{\N}P_{d}(N_{Y}), A)) \ar[r]^{\tau_{\ast}\:\:\:\:\:\:\:\:\:\:\:\:\:\:\:} \ar[dr]_{e_{\ast}} & \lim_{d\rightarrow \infty} K_{\ast}(C_{ub}([0,\infty), C_{f}^{\ast}(\sqcup_{\N}P_d(N_{Y}), A))) \ar[d]^{e_{\ast}} 
		\\
		&  \lim_{d\rightarrow \infty} K_{\ast}(C_{f}^{\ast}(\sqcup_{\N}P_{d}(N_{Y}), A)),
	}$$
	where $\sqcup_{\N}P_{d}(N_{Y})$ is the separated coarse union.
	Then by Lemma \ref{continuous-flow-K} and Lemma \ref{Roe-quasi-stable}, we obtain that the right-hand side vertical map $e_{\ast}$ is an isomorphism. Thus by the assumption, $\tau_{\ast}$ is an isomorphism for the case of $n=0$.
	
	Assume by induction that the lemma holds for $n=l-1$. Next we will prove the lemma also holds for $n=l$. For each simplex $\triangle$ of dimension $l$ in $P_{k}(N_X)$, let $c(\triangle)$ be the center of $\triangle$. Then define
	$$\triangle_1=\{x\in \triangle: d(x, c(\triangle))\leq 1/10\},\:\: \triangle_1=\{x\in \triangle: d(x, c(\triangle))\geq 1/10\}.$$
	And let 
	$$X_1=\cup\{\triangle_1: \text{$\triangle$ is a simplex of dimension $l$}\};$$
	$$X_2=\cup\{\triangle_2: \text{$\triangle$ is a simplex of dimension $l$}\}.$$
	Then $P_{k}(N_X)^{(l)}=X_1\cup X_2$ and $X_1 \cap X_2$ is strongly Lipschitz homotopy equivalent to the disjoint union of the boundaries of all $l$-dimensional simplexes in $P_{k}(N_X)$. Besides, $X_1$ and $X_2$ are strongly Lipschitz homotopy equivalent to $\{c(\triangle): \text{$\triangle$ is a simplex of dimension $l$}\}$ and $P_{k}(N_X)^{(l-1)}$, respectively. Thus by Lemma \ref{Lip-homotopy-invaraint}, Lemma \ref{MV-fil-localg}, Lemma \ref{Lem-Liphomo-localg-along-X}, Lemma \ref{Lem-MV-localg-along-X} and the five lemma, we completed the proof.
\end{proof}	

Now, let us state and prove our main theorems. 

\begin{theorem}\label{main-THM}
	Let $X$, $Y$ be two proper metric spaces with bounded geometry and $A$ be a filtered $C^{\ast}$-algebra. If the following two conditions are satisfied: 
	\begin{enumerate}
		\item \label{cond1} the coarse Baum-Connes conjecture along $X$ with filtered coefficients in $A$ holds for $X\times Y$;  
		\item \label{cond2} the consistent coarse Baum-Connes conjecture with filtered coefficients in $A$ holds for $Y$. 
	\end{enumerate}
	Then the coarse Baum-Connes conjecture with filtered coefficients in $A$ holds for $X\times Y$. 
\end{theorem}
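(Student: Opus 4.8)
The plan is to reduce the statement, via Corollary \ref{CBC-product-sepera}, to a single evaluation map for the product of Rips complexes, and then to factor that map through the localization algebra along $X$, controlling the two resulting pieces by the hypotheses \eqref{cond1} and \eqref{cond2}. Fix nets $N_X$ and $N_Y$ with bounded geometry in $X$ and $Y$, respectively. By Corollary \ref{CBC-product-sepera} it suffices to prove that
$$e_{\ast}: \lim_{k\rightarrow\infty}K_{\ast}(C^{\ast}_{L,f}(P_k(N_X)\times P_k(N_Y), A))\longrightarrow \lim_{k\rightarrow\infty}K_{\ast}(C^{\ast}_{f}(P_k(N_X)\times P_k(N_Y), A))$$
is an isomorphism. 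Since the diagonal $\{(k,k)\}_{k\in\N}$ is cofinal in $\N\times\N$, this inductive limit agrees with the double limit over $(k,d)$, so it is enough to show that
$$e_{\ast}: \lim_{k,d}K_{\ast}(C^{\ast}_{L,f}(P_k(N_X)\times P_d(N_Y), A))\longrightarrow \lim_{k,d}K_{\ast}(C^{\ast}_{f}(P_k(N_X)\times P_d(N_Y), A))$$
is an isomorphism.

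First I would factor this map. For all $k,d$ the inclusion $\tau$ of Lemma \ref{localg-fil-localg} and the evaluation at zero assemble into
$$C^{\ast}_{L,f}(P_k(N_X)\times P_d(N_Y), A)\xrightarrow{\ \tau\ }C^{\ast}_{L,P_k(N_X),f}(P_k(N_X)\times P_d(N_Y), A)\xrightarrow{\ e\ }C^{\ast}_{f}(P_k(N_X)\times P_d(N_Y), A),$$
whose composite is exactly the evaluation-at-zero map of the previous paragraph. Passing to $K$-theory and to the double limit, it suffices to check that $\tau_{\ast}$ and $e_{\ast}$ are both isomorphisms. For $\tau_{\ast}$, fix $k$ and let $d\rightarrow\infty$: hypothesis \eqref{cond2} together with Lemma \ref{localg-fil-localg} states precisely that
$$\tau_{\ast}: \lim_{d}K_{\ast}(C^{\ast}_{L,f}(P_k(N_X)\times P_d(N_Y), A))\longrightarrow \lim_{d}K_{\ast}(C^{\ast}_{L,P_k(N_X),f}(P_k(N_X)\times P_d(N_Y), A))$$
is an isomorphism for each $k$, and since $\lim_{k}$ of a map of inductive systems with isomorphic terms is again an isomorphism, $\tau_{\ast}$ is an isomorphism on the double limit.

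It remains to treat $e_{\ast}$ at the level of double limits. Here I would first collapse the limit over $d$. By Lemma \ref{coarse-equi-Rips} each $P_d(N_Y)$ is coarsely equivalent to $Y$, compatibly with the inclusions $i_{dd'}$, so each bonding map $Id\times i_{dd'}$ is a coarse equivalence that is uniformly continuous on $P_k(N_X)$; by Corollary \ref{coarse-equi-isomor} and the argument in the proof of Lemma \ref{Lem-CBCalong-coarse-equi} (which only uses coarse invariance in the $Y$-direction) these bonding maps are isomorphisms on $K$-theory. Hence for each $k$ the two limits over $d$ are canonically identified with $K_{\ast}(C^{\ast}_{L,P_k(N_X),f}(P_k(N_X)\times Y, A))$ and $K_{\ast}(C^{\ast}_{f}(P_k(N_X)\times Y, A))$, respectively, and $e_{\ast}$ is intertwined accordingly. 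Taking $\lim_k$, the map $e_{\ast}$ becomes exactly the assembly map of Conjecture \ref{Conj-CBC-along-X}, which is an isomorphism by hypothesis \eqref{cond1}. Therefore the composite $e_{\ast}\circ\tau_{\ast}$, which is the evaluation map of the first paragraph, is an isomorphism, and by the reduction above $X\times Y$ satisfies the coarse Baum-Connes conjecture with filtered coefficients in $A$.

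The $K$-theoretic substance is already packaged in Lemma \ref{localg-fil-localg} (handling the $Y$-direction) and in Corollary \ref{CBC-product-sepera} (passing from $P_k(N_X\times N_Y)$ to $P_k(N_X)\times P_k(N_Y)$); the main difficulty I expect is organizational, namely arranging the inductive limits in $k$ and $d$ so that the factorization $e = e\circ\tau$ survives collapsing the $d$-limit on the two middle terms, and checking that the coarse-equivalence identifications in the $Y$-direction are natural with respect to the bonding maps, so that the squares of inductive systems genuinely commute.
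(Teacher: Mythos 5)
Your proposal is correct and follows essentially the same route as the paper: reduce via Corollary \ref{CBC-product-sepera} to the double limit over $P_k(N_X)\times P_d(N_Y)$, factor the evaluation map through $C^{\ast}_{L,P_k(N_X),f}$, handle $\tau_{\ast}$ with condition (2) and Lemma \ref{localg-fil-localg}, and handle the remaining $e_{\ast}$ with condition (1) via coarse invariance in the $Y$-direction (Lemma \ref{Lem-CBCalong-coarse-equi}). The only cosmetic difference is that you collapse the $d$-limit to $Y$ before applying condition (1), whereas the paper transfers condition (1) to each $X\times P_d(N_Y)$ and takes the limit in $k$ first; these are the same use of the same lemmas.
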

\begin{proof}
	Let $N_X$, $N_Y$ be two nets with bounded geometry in $X$ and $Y$, respectively. Then combining Condition (\ref{cond1}) with Lemma \ref{coarse-equi-Rips} and Lemma \ref{Lem-CBCalong-coarse-equi}, we have that $X\times P_{d}(N_Y)$ satisfies the coarse Baum-Connes conjecture along $X$ with filtered coefficients in $A$ for any $d\in \N$, namely, the following map is an isomorphism
	$$\lim_{k\rightarrow \infty}K_{\ast}(C^{\ast}_{L, P_k(N_X), f}(P_{k}(N_X)\times P_{d}(N_Y), A)) \stackrel{e_{\ast}}{\longrightarrow} \lim_{k\rightarrow \infty}K_{\ast}(C_{f}^{\ast}(P_{k}(N_X)\times P_{d}(N_Y), A)).$$
	On the other hand, by Condition (\ref{cond2}) and Lemma \ref{localg-fil-localg}, we have the following isomorphism for any $k\in \N$:
	$$\lim_{d\rightarrow \infty}K_{\ast}(C^{\ast}_{L,f}(P_{k}(N_X)\times P_{d}(N_Y), A))\stackrel{\tau_{\ast}}{\longrightarrow} \lim_{d\rightarrow \infty}K_{\ast}(C^{\ast}_{L, P_k(N_X), f}(P_{k}(N_X)\times P_{d}(N_Y), A)).$$
	Combining the above two isomorphisms, we obtain an isomorphism:
	$$\lim_{k, d\rightarrow \infty}K_{\ast}(C^{\ast}_{L, f}(P_{k}(N_X)\times P_{d}(N_Y), A)) \stackrel{e_{\ast}}{\longrightarrow} \lim_{k, d\rightarrow \infty}K_{\ast}(C_{f}^{\ast}(P_{k}(N_X)\times P_{d}(N_Y), A)),$$
	which implies that $X\times Y$ satisfies the coarse Baum-Connes conjecture with filtered coefficients in $A$ by Corollary \ref{CBC-product-sepera}.
\end{proof}

Let $A$ be a filtered $C^{\ast}$-algebra. Recall that the Roe algebra $C^{\ast}_f(Y, A)$ is a filtered $C^{\ast}$-algebra equipped with the filtration $(C^{\ast}_f(Y, A))_{r>0}$ defined in Lemma \ref{filtration-Roe-algebra} and the uniform product $\prod^{u}_{\N}(A\otimes \K(H))$ is also a filtered $C^{\ast}$-algebra equipped with the filtration $(\prod_{\N} (A_r\otimes \K(H)))_{r>0}$. Combining Theorem \ref{main-THM} with Proposition \ref{Prop-CBC-X-filcoeff} and Corollary \ref{Cor-CBCFC-UCBC}, we have the following theorem which implies that the coarse Baum-Connes conjecture with filtered coefficients is closed under products.

\begin{theorem}\label{main-corollary}
	Let $X$, $Y$ be two proper metric spaces with bounded geometry and $A$ be a filtered $C^{\ast}$-algebra. If the following two conditions are satisfied:
	\begin{enumerate}
		\item $X$ satisfies the coarse Baum-Connes conjecture with filtered coefficients in the Roe algebra $C^{\ast}_{f}(Y, A)$;  
		\item $Y$ satisfies the coarse Baum-Connes conjecture with filtered coefficients in the uniform product $\prod^{u}_{\N}(A\otimes \K(H))$.
	\end{enumerate}
	Then $X\times Y$ satisfies the coarse Baum-Connes conjecture with filtered coefficients in $A$. 
	
	In particular, if $X$ and $Y$ satisfy the coarse Baum-Connes conjecture with filtered coefficients, then $X\times Y$ satisfies the coarse Baum-Connes conjecture with filtered coefficients.
\end{theorem}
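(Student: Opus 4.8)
The plan is to obtain Theorem \ref{main-corollary} as a formal consequence of Theorem \ref{main-THM}, once the two hypotheses of that theorem are matched with the two conditions in the present statement by means of Proposition \ref{Prop-CBC-X-filcoeff} and Corollary \ref{Cor-CBCFC-UCBC}.

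First I would verify hypothesis (\ref{cond1}) of Theorem \ref{main-THM}. By Lemma \ref{filtration-Roe-algebra}, the Roe algebra $C^{\ast}_f(Y, A)$ carries a canonical filtration, so it is a legitimate filtered $C^{\ast}$-algebra and Condition (1) of Theorem \ref{main-corollary} --- that $X$ satisfies the coarse Baum-Connes conjecture with filtered coefficients in $C^{\ast}_f(Y, A)$ --- is meaningful. Feeding this into Proposition \ref{Prop-CBC-X-filcoeff}, with $C^{\ast}_f(Y,A)$ in the role of the coefficient algebra there, yields exactly that the coarse Baum-Connes conjecture along $X$ with filtered coefficients in $A$ holds for $X\times Y$.

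Next I would verify hypothesis (\ref{cond2}) of Theorem \ref{main-THM}. The uniform product $\prod^{u}_{\N}(A\otimes \K(H))$ is a filtered $C^{\ast}$-algebra with filtration $\bigl(\prod_{\N}(A_r\otimes \K(H))\bigr)_{r>0}$, so Condition (2) of Theorem \ref{main-corollary} is meaningful, and applying Corollary \ref{Cor-CBCFC-UCBC} to $Y$ shows that the consistent coarse Baum-Connes conjecture with filtered coefficients in $A$ holds for $Y$. Since $X$ and $Y$ are proper with bounded geometry by assumption, all hypotheses of Theorem \ref{main-THM} are in place, and it gives that $X\times Y$ satisfies the coarse Baum-Connes conjecture with filtered coefficients in $A$.

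Finally, for the ``in particular'' clause I would apply the first part with $A$ ranging over all filtered $C^{\ast}$-algebras: if $X$ and $Y$ satisfy the coarse Baum-Connes conjecture with filtered coefficients in \emph{every} filtered $C^{\ast}$-algebra, then in particular $X$ satisfies it in $C^{\ast}_f(Y, A)$ and $Y$ satisfies it in $\prod^{u}_{\N}(A\otimes \K(H))$ for each such $A$, so the first part applies for every $A$ and delivers the conclusion. I do not anticipate a genuine obstacle in this argument: the substantial work is already packaged in Theorem \ref{main-THM} (and, underneath it, the Mayer-Vietoris induction of Lemma \ref{localg-fil-localg}), and the only point requiring care here is confirming that the two auxiliary coefficient algebras really are filtered, so that the input conjectures make sense and Proposition \ref{Prop-CBC-X-filcoeff} and Corollary \ref{Cor-CBCFC-UCBC} apply verbatim.
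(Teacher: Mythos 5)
Your proposal is correct and follows exactly the route the paper takes: the paper derives Theorem \ref{main-corollary} precisely by combining Theorem \ref{main-THM} with Proposition \ref{Prop-CBC-X-filcoeff} (to convert Condition (1) into the conjecture along $X$) and Corollary \ref{Cor-CBCFC-UCBC} (to convert Condition (2) into the consistent conjecture for $Y$). Your handling of the ``in particular'' clause by quantifying over all filtered coefficient algebras is also the intended reading.
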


\begin{remark}
	For two discrete groups $\Gamma_1$ and $\Gamma_2$, Oyono-Oyono proved that $\Gamma_1\times \Gamma_2$ satisfies the Baum-Connes conjecture with coefficients if and only if $\Gamma_1$ and $\Gamma_2$ satisfy the Baum-Connes conjecture with coefficients in \cite[Corollary 7.12]{Oyono-BC-extensions}. Thus, we can see Theorem \ref{main-corollary} as a coarse analogue of Oyono-Oyono's result. Besides, for any proper metric space $X$, the product $X\times [0, \infty)$ always satisfies the coarse Baum-Connes conjecture with filtered coefficients by an argument of the Eilenberg swindle (see \cite[Proposition 7.5.2]{WillettYu-Book}). However, we know that the coarse Baum-Connes conjecture with filtered coefficients does not hold for some expanders. Thus, Theorem \ref{main-corollary} is not a necessary and sufficient condition in general.  
\end{remark}

When $A=\C$, we have that $\prod^{u}_{\N}(A\otimes \K(H))=\prod_{\N} \K(H)$ equipped with the trivial filtration, namely, $(\prod_{\N} \K(H))_r=\prod_{\N} \K(H)$ for each $r>0$. Thus, we obtain the following corollary which enlarges the class of product metric spaces that satisfy the coarse Baum-Connes conjecture.

\begin{corollary}\label{corollay-CBC}
	Let $X$ and $Y$ be two proper metric spaces with bounded geometry. If the followings are satisfied: 
	\begin{enumerate}
		\item $X$ satisfies the coarse Baum-Connes conjecture with filtered coefficients in the Roe algebra $C^{\ast}(Y)$;
		\item $Y$ satisfies the coarse Baum-Connes conjecture with coefficients in $\prod_{\N} \K(H)$.
	\end{enumerate}
	Then $X\times Y$ satisfies the coarse Baum-Connes conjecture. 
	
	In particular, if $X$ and $Y$ satisfy the coarse Baum-Connes conjecture with filtered coefficients, then $X\times Y$ satisfies the coarse Baum-Connes conjecture.
\end{corollary}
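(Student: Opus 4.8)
The plan is to obtain Corollary~\ref{corollay-CBC} as the specialization $A=\C$ of Theorem~\ref{main-corollary}, so that the entire argument reduces to matching up the coefficient algebras that appear in the two statements. First I would record that, by the remark following Definition~\ref{Def-Roe-filcoeff}, $C^{\ast}_{f}(Y,\C)=C^{\ast}(Y)$, and that by Lemma~\ref{filtration-Roe-algebra} this Roe algebra carries a canonical filtration (by propagation together with the trivial filtration on $\C$); hence condition~(1) of Theorem~\ref{main-corollary} with $A=\C$ is verbatim condition~(1) of the corollary.

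Next I would analyze the coefficient algebra in condition~(2). Equipping $\C$ with the trivial filtration $\C_r=\C$ forces the filtration $(\C_r\otimes\K(H))_{r>0}$ on $\C\otimes\K(H)=\K(H)$ to be trivial as well, so by Definition~\ref{Def-uniform-product} the uniform product $\prod^{u}_{\N}(\C\otimes\K(H))$ is the norm closure of $\bigcup_{r>0}\prod_{\N}\K(H)=\prod_{\N}\K(H)$; that is, it coincides with the direct product $\prod_{\N}\K(H)$ carried with its trivial filtration. Since the coarse Baum--Connes conjecture with filtered coefficients in a trivially filtered $C^{\ast}$-algebra is by definition the coarse Baum--Connes conjecture with coefficients in that algebra, condition~(2) of Theorem~\ref{main-corollary} with $A=\C$ becomes precisely condition~(2) of the corollary. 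The conclusion of Theorem~\ref{main-corollary} then asserts that $X\times Y$ satisfies the coarse Baum--Connes conjecture with filtered coefficients in $\C$, which is the ordinary coarse Baum--Connes conjecture for $X\times Y$.

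For the final ``in particular'' clause, I would note that if $X$ and $Y$ satisfy the coarse Baum--Connes conjecture with filtered coefficients in \emph{every} filtered $C^{\ast}$-algebra, then in particular $X$ satisfies it with filtered coefficients in the filtered algebra $C^{\ast}_{f}(Y,\C)$ and $Y$ satisfies it with filtered coefficients in $\prod^{u}_{\N}(\C\otimes\K(H))$, so the two numbered hypotheses of the corollary are met and the stated conclusion follows. I do not expect any genuine obstacle here: all of the analytic content has already been carried out in Theorem~\ref{main-corollary} and the results feeding it (Theorem~\ref{main-THM}, Proposition~\ref{Prop-CBC-X-filcoeff}, Corollary~\ref{Cor-CBCFC-UCBC}), and the only point that requires a word of care is the bookkeeping identification $\prod^{u}_{\N}(\C\otimes\K(H))=\prod_{\N}\K(H)$ together with the observation that ``filtered coefficients'' collapses to ``coefficients'' for a trivially filtered algebra.
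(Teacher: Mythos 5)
Your proposal is correct and is essentially identical to the paper's own argument: the paper likewise obtains this corollary by setting $A=\C$ in Theorem \ref{main-corollary}, noting that $\prod^{u}_{\N}(\C\otimes\K(H))=\prod_{\N}\K(H)$ with the trivial filtration and that the conjecture with filtered coefficients in a trivially filtered algebra is the conjecture with coefficients. The bookkeeping identifications you single out are exactly the ones the paper records before stating the corollary.
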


\begin{remark}
	For any $p\in [1,\infty)$, there is a $\ell^p$ coarse Baum-Connes conjecture for computing the $K$-theory of $\ell^p$ Roe algebras (see \cite{Zhang-Zhou-lpCBC}). Actually, Theorem \ref{main-corollary} and Corollary \ref{corollay-CBC} also hold for the $\ell^p$ coarse Baum-Connes conjecture by the similar proofs.
\end{remark}

\section{Applications}\label{Section-ex}

In this section, we illustrate that three ways mentioned in the introduction to the coarse Baum-Connes conjecture also work for the conjecture with filtered coefficients. The reason is that all morphisms appeared in the proofs of those three ways are the identity map on filtered coefficients. The results in this section recover Fukaya and Oguni's result in \cite[Theorem 1.1]{Fukaya-Oguni-2015} and also provide some new examples of product metric spaces satisfying the coarse Baum-Connes conjecture. 

\subsection{Metric spaces with finite asymptotic dimension}
\begin{definition}\label{Def-asy}
	Call a metric space $X$ has \textit{finite asymptotic dimension}, if there is a non-negative integer $m<\infty$ satisfying that for any $r>0$, there exists a uniformly bounded cover $\{U_i\}_{i\in I}$ of $X$ such that no ball with radius $r$ in $X$ intersects more than $m+1$ members of $\{U_i\}_{i\in I}$.  
\end{definition}

\begin{example}
	Every polycyclic group (which is solvable) equipped with a proper metric has finite asymptotic dimension (cf. \cite[Chapter 2]{Nowak-Yu-Book}).
\end{example}

In \cite{Yu1998}, Yu introduced the quantitative $K$-theory to prove that the coarse Baum-Connes conjecture holds for metric spaces with finite asymptotic dimension. 

Let $X$ be a proper metric space and $A$ be a filtered $C^{\ast}$-algebra. Then the Roe algebra $C_f^{\ast}(X, A)$ is also a filtered $C^{\ast}$-algebra equipped with the filtration defined by
$$C_f^{\ast}(X, A)_s=\{T\in C_f^{\ast}(X, A): \prop(T)\leq s\},$$
for each $s\geq 0$ (please compare it with Lemma \ref{filtration-Roe-algebra}). Then by using the quantitative $K$-theory to $C_f^{\ast}(X, A)$ with the above filtration, we can obtain the following result. 

\begin{proposition}\label{propo-CBCFC-asy}
	Let $X$ be a proper metric space with finite asymptotic dimension and $A$ be a filtered $C^{\ast}$-algebra, then the coarse Baum-Connes conjecture with filtered coefficients in $A$ holds for $X$.
\end{proposition}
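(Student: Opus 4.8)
The plan is to carry out Yu's quantitative $K$-theory argument \cite{Yu1998} in this setting, the key point being that the filtered coefficient algebra $A$ enters every homomorphism, isometry and homotopy involved only through the identity map, so that all the propagation and norm estimates are literally those of \cite{Yu1998} and \cite{OyonoYu2015}. As indicated just before the statement, one regards $C_f^\ast(X,A)$ as a filtered $C^\ast$-algebra for the \emph{coarse} filtration $C_f^\ast(X,A)_s=\{T\in C_f^\ast(X,A):\prop(T)\le s\}$ — coarser than the one of Lemma \ref{filtration-Roe-algebra}, since it forgets the order in $A$ — so that the quantitative $K$-theory of \cite{OyonoYu2015} applies to it and, likewise, to the ideals attached to subsets. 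By definition of the conjecture it suffices to show, for a locally finite net $N_X\subseteq X$, that
$$e_\ast\colon \lim_{k\to\infty}K_\ast\!\left(C_{L,f}^\ast(P_k(N_X),A)\right)\longrightarrow\lim_{k\to\infty}K_\ast\!\left(C_f^\ast(P_k(N_X),A)\right)$$
is an isomorphism; note that $N_X$ inherits finite asymptotic dimension from $X$ and that every bounded subset of $N_X$ is finite.

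First I would settle the base case of asymptotic dimension zero, i.e.\ of a space that at every scale is an $r$-disjoint union of sets of uniformly bounded diameter. For a single such piece $B$ and $k$ at least its diameter bound, $P_k(B)$ is a finite simplex, hence strongly Lipschitz contractible, so Lemma \ref{Lip-homotopy-invaraint} together with Lemma \ref{continuous-flow-K} (applied to the quasi-stable algebra $\K\otimes A$) identifies $K_\ast(C_{L,f}^\ast(P_k(B),A))$ with $K_\ast(A)$, while coarse invariance (Corollary \ref{coarse-equi-isomor}) gives $C_f^\ast(P_k(B),A)\cong\K\otimes A$ and hence the same $K$-theory, compatibly with $e_\ast$. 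Passing to the disjoint union, one uses that the Roe and localization algebras of a coarse disjoint union are the appropriate uniform products of the pieces, together with the compatibility of quantitative $K$-theory with uniform products of filtered $C^\ast$-algebras — the mechanism already used in the $n=0$ step of Lemma \ref{localg-fil-localg} and in Corollary \ref{Cor-CBCFC-UCBC}.

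Next I would run the induction. Fixing $m=\operatorname{asdim}X$, for each scale $r$ one has a cover $X=U^{(0)}\cup\dots\cup U^{(m)}$ with each $U^{(j)}$ an $r$-disjoint union of uniformly bounded sets; put $Y^{(j)}=U^{(0)}\cup\dots\cup U^{(j)}$, so $Y^{(0)}=U^{(0)}$ and $Y^{(m)}=X$. After a bounded thickening, $(Y^{(j-1)},U^{(j)})$ is a uniformly excisive cover of $Y^{(j)}$ whose members and whose intersection $Y^{(j-1)}\cap U^{(j)}$ are, at each Rips scale, strongly Lipschitz homotopy equivalent respectively to $Y^{(j-1)}$ (treated at the previous inductive stage), to $U^{(j)}$ and to $Y^{(j-1)}\cap U^{(j)}$, the last two being again $r$-disjoint unions of uniformly bounded sets and so covered by the base case. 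On the localization side one uses Lemma \ref{MV-fil-localg} and Lemma \ref{Lip-homotopy-invaraint}; on the Roe side one uses the corresponding six-term Mayer--Vietoris sequence for the limits $\lim_k K_\ast(C_f^\ast(P_k(\cdot),A))$ — a filtered-coefficient analogue of the coarse Mayer--Vietoris principle \cite{Higson-Roe-Yu}, extracted from the controlled Mayer--Vietoris sequence of \cite{OyonoYu2015} for the filtered algebra $C_f^\ast(P_k(N_X),A)$ and its ideals attached to $Y^{(j-1)}$ and $U^{(j)}$. Naturality of $e_\ast$ and the five lemma then pass the isomorphism from $Y^{(j-1)}$, $U^{(j)}$ and their intersection to $Y^{(j)}$, and after $m$ steps to $X$.

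The main obstacle is exactly this Roe-side Mayer--Vietoris: the boundary maps of an ordinary six-term sequence enlarge propagation, so the induction cannot be carried out in ordinary $K$-theory and must be phrased throughout in quantitative $K$-theory, establishing at each stage a \emph{controlled} isomorphism with explicit control of the propagation and norm constants, and only passing to $\lim_{k\to\infty}$ — where the growth of those constants is absorbed by enlarging the Rips scale — at the end; one must also reconcile the fact that the decomposition $U^{(0)},\dots,U^{(m)}$ depends on the scale $r$. Keeping this bookkeeping uniform across the $m$ inductive steps, and checking that the controlled Mayer--Vietoris sequence and controlled five lemma of \cite{OyonoYu2015} apply to $\big(C_f^\ast(P_k(N_X),A)_s\big)_{s>0}$ and to $C_{L,f}^\ast(P_k(N_X),A)$, is the technical core — but, as noted, it is word-for-word the argument of \cite{Yu1998} and \cite{OyonoYu2015}, since $A$ is inert under every map in sight.
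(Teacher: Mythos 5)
Your proposal is correct and follows exactly the route the paper intends: the paper offers no detailed proof, merely observing that Yu's quantitative $K$-theory argument from \cite{Yu1998} applies verbatim to $C_f^{\ast}(X,A)$ equipped with the propagation filtration $C_f^{\ast}(X,A)_s=\{T:\prop(T)\leq s\}$, since the coefficient algebra $A$ is inert under every map involved. Your write-up is in effect a faithful expansion of that one-line remark, with the base case, the induction on asymptotic dimension, and the controlled Mayer--Vietoris bookkeeping spelled out as in \cite{Yu1998} and \cite{OyonoYu2015}.
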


\subsection{Metric spaces with coarse embeddability into Hilbert space and CE-by-CE extensions}

\begin{definition}\label{Def-coarse-emd}
	A proper metric space $X$ is called to be \textit{coarsely embedded into Hilbert space $H$}, if there exists a map $f: X\rightarrow H$ and two non-decreasing functions $\rho_{\pm}:[0,\infty)\rightarrow [0,\infty)$ with $\lim_{t\rightarrow \infty} \rho_{-}(t)=\infty$ such that 
	$$\rho_{-}(d(x,x'))\leq \parallel f(x)-f(x') \parallel \leq \rho_{+}(d(x,x')),$$
	for all $x,x'\in X$.
\end{definition}

\begin{example}
	The class of metric spaces which admit a coarse embedding into Hilbert space contains hyperbolic spaces, mapping class groups and linear groups equipped with the word length metric (cf. \cite{Nowak-Yu-Book}).
\end{example}

In \cite{Yu2000}, Yu used the Dirac-dual-Dirac method to prove that the coarse Baum-Connes conjecture holds for any metric space which admits a coarse embedding into Hilbert space (see \cite[Chapter 12]{WillettYu-Book} for a simplified proof). We can take tensor products of the Dirac map and the dual-Dirac map with the identity map on filtered coefficients in the Dirac-dual-Dirac method, then we obtain the following result. 

\begin{proposition}\label{propo-CBC-coarse-embed}
	Let $X$ be a proper metric space with bounded geometry and $A$ be a filtered $C^{\ast}$-algebra. If $X$ can be coarsely embedded into Hilbert space, then the coarse Baum-Connes conjecture with filtered coefficients in $A$ holds for $X$.
\end{proposition}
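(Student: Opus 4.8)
The plan is to carry over, essentially verbatim, the Dirac--dual-Dirac method of Yu \cite{Yu2000} --- in the streamlined form of \cite[Chapter~12]{WillettYu-Book} --- inserting the filtered coefficient algebra $A$ as an inert tensor factor at every stage. Since $X$ has bounded geometry it admits a net $N_X\subseteq X$ with bounded geometry, and the coarse embedding $f\colon X\to H$ restricts to a coarse embedding of $N_X$ into the Hilbert space $H$; by definition, Conjecture~\ref{CBC} for $X$ asserts exactly that
\[
e_{\ast}\colon \lim_{k\to\infty}K_{\ast}\big(C^{\ast}_{L,f}(P_k(N_X), A)\big) \longrightarrow \lim_{k\to\infty}K_{\ast}\big(C^{\ast}_{f}(P_k(N_X), A)\big)
\]
is an isomorphism, and this is what we prove.

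Next I would set up the twisted algebras. Using $f$, one associates to $H$ its graded $C^{\ast}$-algebra $\mathcal{A}$ --- a direct limit over finite-dimensional affine subspaces $V\subseteq H$ of the (graded) algebras $\mathcal{S}\otimes\mathcal{C}(V)$, as in \cite[Chapter~12]{WillettYu-Book} --- and then, for each $k$, the \emph{twisted Roe algebra with filtered coefficients} $C^{\ast}_{f}(P_k(N_X), A;\mathcal{A})$ and the \emph{twisted localization algebra with filtered coefficients} $C^{\ast}_{L,f}(P_k(N_X), A;\mathcal{A})$: these are defined exactly as in Definitions~\ref{Def-Roe-filcoeff} and~\ref{Def-Localg-filcoeff}, but with the coefficient algebra $\K(H)\otimes A_r$ enlarged to $\K(H)\otimes A_r\otimes\mathcal{A}$ (suitably graded, with $\mathcal{A}$ varying with the vertex along the embedding $f$). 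Because every geometric operation used below acts as the identity on the $A$-factor and leaves the filtration $(A_r)_{r>0}$ untouched, these are objects of the same type as their untwisted relatives; in particular the Mayer-Vietoris sequence of Lemma~\ref{Lem-MV-sequence}, its localized counterpart Lemma~\ref{MV-fil-localg}, and the strong Lipschitz homotopy invariance of Lemma~\ref{Lip-homotopy-invaraint} all apply to the twisted algebras verbatim.

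The geometric core is the twisted form of the conjecture: the evaluation-at-zero map
\[
e_{\ast}\colon \lim_{k\to\infty}K_{\ast}\big(C^{\ast}_{L,f}(P_k(N_X), A;\mathcal{A})\big) \longrightarrow \lim_{k\to\infty}K_{\ast}\big(C^{\ast}_{f}(P_k(N_X), A;\mathcal{A})\big)
\]
is an isomorphism. This is proved as in \cite[Chapter~12]{WillettYu-Book}: exhaust $H$ by finite-dimensional subspaces $V_0\subseteq V_1\subseteq\cdots$, filter $\mathcal{A}$ accordingly, cut $P_k(N_X)$ into two pieces along the $f$-preimage of a suitable family of subsets, and run a Mayer-Vietoris induction on dimension whose base case is a Bott-periodicity computation over a bounded piece --- bounded geometry of $N_X$ making the relevant covers uniformly controllable. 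Since $A$ sits as an inert tensor factor in every algebra and every map in this argument, the proof carries over word for word. Then I would run the Dirac--dual-Dirac comparison: the Bott (dual-Dirac) homomorphism $\beta$ from the untwisted to the twisted picture and the Dirac homomorphism $\alpha$ in the opposite direction, together with their localized versions $\beta_{L}$, $\alpha_{L}$ --- all obtained from the classical maps by tensoring with $\mathrm{id}_{A}$ --- are compatible with evaluation at zero and, after passing to the limit over $k$, induce mutually inverse isomorphisms on $K$-theory by the Bott-periodicity and rotation arguments of \cite{Yu2000}, \cite[Chapter~12]{WillettYu-Book}, which ignore the $A$-factor entirely. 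In the resulting commuting square relating the untwisted and twisted evaluation maps through $\beta_{\ast}$ and $\beta_{L,\ast}$, the two horizontal arrows and the right-hand vertical arrow are isomorphisms, whence so is the untwisted $e_{\ast}$; this is precisely the coarse Baum-Connes conjecture with filtered coefficients in $A$ for $X$.

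\textbf{Main obstacle.} There is no new mathematical depth --- every ingredient is borrowed from \cite{Yu2000} and \cite[Chapter~12]{WillettYu-Book} --- so the real work is the bookkeeping of the twisted algebras \emph{with filtered coefficients}: one must verify that enlarging the coefficients by the tensor factor $A_r$ keeps every construction inside the filtered framework, i.e.\ that the twisted Roe and localization algebras with filtered coefficients are genuine $C^{\ast}$-algebras of the expected form and that the Dirac, Bott, rotation and cutting maps restrict to them. This is routine precisely because $A$ is inert: no map in the argument ever mixes the $A$-factor with the remaining factors or alters the filtration degree in the $A$-direction. The one quantitative point worth making explicit is that the Bott and Dirac homotopies can be realized through families of uniformly continuous, finite-propagation operators, so that they genuinely land in the (localization) Roe algebras as defined in Section~\ref{Sec-CBC}; this is already part of the classical argument and is unaffected by the extra $A$-factor.
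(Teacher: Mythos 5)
Your proposal is correct and follows essentially the same route as the paper: the paper's entire argument for Proposition \ref{propo-CBC-coarse-embed} is the remark that one runs the Dirac--dual-Dirac method of \cite{Yu2000} (in the form of \cite[Chapter~12]{WillettYu-Book}) after tensoring the Dirac and dual-Dirac maps with the identity on the filtered coefficients, which is exactly what you do, only spelled out in more detail. Your elaboration of the twisted algebras with filtered coefficients and the Mayer--Vietoris induction is a faithful expansion of what the paper leaves implicit.
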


The following notion was introduced by Deng, Wang and Yu in \cite{Deng-Wang-Yu-2023} in order to consider the coarse Baum-Connes conjecture for some relative expanders. Recall that the \textit{coarse union} of a sequence of metric spaces $(X_i)_{i\in \N}$ is a disjoint union $\sqcup_{i\in \N} X_i$ equipped with a metric $d$ that restricts to the original metric on each $X_i$ and satisfies $d(X_n, (\sqcup_{\N}X_i)\setminus X_n)\rightarrow \infty$ as $n\rightarrow \infty$.

\begin{definition}\label{Def-CE-BY-CE}
	A sequence of group extensions $(1\rightarrow N_m\rightarrow \Gamma_m\rightarrow Q_m\rightarrow 1)_{m\in \N}$ is called to be a \textit{CE-by-CE extension}, if both the coarse union of $(N_m)_{m\in \N}$ and the coarse union of $(Q_m)_{m\in \N}$ admit a coarse embedding into Hilbert space. 
\end{definition}

\begin{example}
	Some relative expanders provide examples of CE-by-CE extensions, such as the box spaces of groups $\Z^2 \rtimes_{Q} \mathbb{F}_3$ and $\Z \wr_{Q} \mathbb{F}_3$, where $Q$ is the kernel of the quotient map from $SL(2, \Z)$ to $SL(2, \Z_2)$ (cf. \cite{Deng-Wang-Yu-2023}).
\end{example}

Using the same arguments (relies on the Dirac-dual-Dirac method) in \cite{Deng-Wang-Yu-2023}, we can obtain the following proposition.

\begin{proposition}\label{propo-CBC-cebyce}
	Let $(1\rightarrow N_m\rightarrow \Gamma_m\rightarrow Q_m\rightarrow 1)_{m\in \N}$ be a CE-by-CE extension with all coarse unions of $(N_m)_{m\in \N}$, $(\Gamma_m)_{m\in \N}$ and $(Q_m)_{m\in \N}$ have bounded geometry. Then the coarse Baum-Connes conjecture with filtered coefficients in any filtered $C^{\ast}$-algebra holds for the coarse union of $(\Gamma_m)_{m\in \N}$.  
\end{proposition}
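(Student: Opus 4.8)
The plan is to revisit the Dirac--dual-Dirac argument of Deng, Wang and Yu in \cite{Deng-Wang-Yu-2023}, which establishes the coarse Baum--Connes conjecture for the coarse union $W:=\sqcup_{m\in\N}\Gamma_m$, and to observe that every ingredient of that argument survives being tensored with the identity map on a filtered coefficient algebra $A$, without disturbing the filtration.

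First I would recall the apparatus of \cite{Deng-Wang-Yu-2023}. The short exact sequences exhibit $W$ as fibered over the coarse union $\sqcup_m Q_m$ with fibers modelled on $\sqcup_m N_m$, and the CE-by-CE hypothesis supplies coarse embeddings of $\sqcup_m N_m$ and of $\sqcup_m Q_m$ into Hilbert space. Out of these two embeddings and finite-propagation cut-offs one builds, on the $K$-theory of the Roe algebra and of the localization algebra of $W$, a Bott (dual-Dirac) map into the $K$-theory of a twisted algebra incorporating the Clifford algebras of the Hilbert spaces carrying the embeddings, together with a Dirac map in the reverse direction. The essential point is that this entire construction takes place on the Hilbert-space side $\ell^2(Z_W)\otimes H$ (enlarged by the auxiliary Hilbert spaces of the embeddings): the twisting operators act as the identity on any coefficient factor. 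Hence, running the construction with $\K(\ell^2(Z_W)\otimes H)\otimes A_r$ in place of $\K(\ell^2(Z_W)\otimes H)$, one obtains the analogous Bott and Dirac maps between $K_\ast(C^\ast_{f}(W,A))$, $K_\ast(C^\ast_{L,f}(W,A))$ and the $K$-theory of the corresponding $A$-twisted algebras; these maps have controlled propagation, commute with the evaluation-at-zero homomorphism $e$, and --- since the twisting never touches the $A$-coordinate --- produce only operators whose coefficients lie in $\K(\ell^2(Z_W)\otimes H)\otimes A_s$ for a uniform $s$, so that they genuinely belong to $C^\ast_f(W,A)$ and respect the filtration of Lemma \ref{filtration-Roe-algebra}.

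Second, the homotopies. In \cite{Deng-Wang-Yu-2023} the Dirac map composed after the Bott map induces the identity on the relevant $K$-groups by a Bott periodicity / rotation homotopy, and the reverse composition is dealt with by a limiting argument; all these homotopies are carried out on the coefficient-free side and therefore commute with $-\otimes\mathrm{id}_A$. Running them verbatim for the $A$-twisted algebras shows the Dirac map is a $K$-theory isomorphism, compatibly with $e_\ast$. Finally, exactly as when $A=\C$, one transports this from $W$ to its Rips complexes: since $W$ has bounded geometry one fixes a bounded-geometry net $N_W$, and then Lemma \ref{coarse-equi-Rips}, Lemma \ref{Lip-homotopy-invaraint}, the Mayer--Vietoris sequence of Lemma \ref{MV-fil-localg} over the skeleta of $P_k(N_W)$, and a five-lemma chase give that
$$e_\ast:\lim_{k\to\infty}K_\ast\!\left(C^\ast_{L,f}(P_k(N_W),A)\right)\longrightarrow\lim_{k\to\infty}K_\ast\!\left(C^\ast_{f}(P_k(N_W),A)\right)$$
is an isomorphism, i.e. the coarse Baum--Connes conjecture with filtered coefficients in $A$ holds for $W$.

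The main obstacle is not conceptual but one of bookkeeping. After tensoring the twisted-algebra apparatus with $\mathrm{id}_A$ one has to check, stage by stage, that every operator appearing still satisfies both defining conditions of Definition \ref{Def-Roe-filcoeff} --- finite propagation, and locally compact coefficients lying in $\K(\ell^2(Z_W)\otimes H)\otimes A_s$ for some $s$ independent of the compact set --- and that the localization versions keep $\prop(u(t))\to 0$ as $t\to\infty$. Because the twisting operators have finite propagation and act trivially on $A$, this causes no difficulty, but it must be verified throughout; it is exactly the principle announced at the start of Section \ref{Section-ex} that every morphism occurring in the three routes to the conjecture is the identity on the filtered coefficients.
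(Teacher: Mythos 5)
Your proposal is correct and follows exactly the route the paper takes: the paper's entire justification is the one-sentence remark that the Dirac--dual-Dirac argument of Deng, Wang and Yu goes through verbatim because every morphism in that construction acts as the identity on the filtered coefficient algebra, which is precisely the observation you elaborate. Your write-up is in fact more detailed than the paper's, but it is the same argument.
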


\begin{remark}
	Recently, Deng and Guo in \cite{Deng-Guo-2024} introduced a twisted coarse Baum-Connes conjecture and they showed that the coarse Baum-Connes conjecture holds for the coarse union of $(\Gamma_m)_{m\in \N}$ if coarse unions of $(N_m)_{m\in \N}$ and $(Q_m)_{m\in \N}$ satisfy the twisted coarse Baum-Connes conjecture with coefficients which generalizes the main result in \cite{Deng-Wang-Yu-2023}. Actually, we can obtain the similar result for the coarse Baum-Connes conjecture with filtered coefficients.
\end{remark}

\subsection{Open cones, coarse homotopy and relatively hyperbolic groups}

\begin{definition}\label{Def-opencone}
	Let $(M, d_M)$ be a compact metric space with diameter at most two. The \textit{open cone} over $M$, denoted by $\mathcal{O}M$, is defined to be the quotient space $(\R_{\geq 0}\times M)/(\{0\}\times M)$ with the following metric 
	$$d((t,x),(s,y))=\lvert t-s \rvert+\min\{t,s\}d_{M}(x,y),$$
	for any $t,s\in \R_{+}$ and $x,y\in M$.
\end{definition}

\begin{definition}\cite[Definition 2.1]{Fukaya-Oguni-2020} \label{DefCoarHom}
	Let $f,g: X \rightarrow Y$ be two coarse maps between proper metric spaces. $f$ and $g$ are called to be \textit{coarsely homotopic}, if there exists a metric subspace $Z=\{(x,t): 0\leq t\leq t_x\}$ of $X\times \mathbb{R}$ and a coarse map $h: Z\rightarrow Y$, such that 
	\begin{itemize}
		\item the map from $X$ to $\mathbb{R}$ given by $x\mapsto t_x$ satisfies that for any $R\geq 0$, there exists $S\geq 0$ such that $|t_{x}-t_{x'}|\leq S$ for all elements $x,x'\in X$ with $d(x,x')\leq R$,
		\item $h(x,0)=f(x)$, $h(x,t_x)=g(x)$.
	\end{itemize}
	The map $f$ is called a \textit{coarse homotopy equivalence map} if there exists a coarse map $f':Y\rightarrow X$ such that $f'f$ and $ff'$ are coarsely homotopic to $id_X$ and $id_Y$, respectively. Call $X$ and $Y$ be \textit{coarsely homotopy equivalent} if there exists a coarse homotopy equivalence map from $X$ to $Y$.
\end{definition}

Every open cone satisfies the coarse Baum-Connes conjecture by an argument of the Eilenberg swindle. Its proof can be applied to the conjecture with filtered coefficients in any filtered $C^{\ast}$-algebras (cf. \cite{Zhang-opencone}, \cite{HigsonRoe-CBC}). Moreover, the conjecture with filtered coefficients is a coarse homotopy invariant (the proof relies on the Mayer-Vietoris six-term exact sequence, cf. \cite{Zhang-opencone}, \cite[Chapter 12]{HigsonRoe-Book}). Thus we have the following proposition.

\begin{proposition}\label{Thm-CBC-opencone}
	If a proper metric space $X$ is coarsely homotopy equivalent to an open cone, then the coarse Baum-Connes conjecture with filtered coefficients in any filtered $C^{\ast}$-algebra holds for $X$.
\end{proposition}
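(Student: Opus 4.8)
The plan is to deduce the statement from two facts, each the filtered-coefficient analogue of a classical result of Higson and Roe: (I) the coarse Baum-Connes conjecture with filtered coefficients in any filtered $C^{\ast}$-algebra holds for every open cone $\mathcal{O}M$; and (II) the conjecture with filtered coefficients is invariant under coarse homotopy equivalence (Definition \ref{DefCoarHom}). Granting these, if $X$ is coarsely homotopy equivalent to an open cone $\mathcal{O}M$, then fact (I) gives the conjecture for $\mathcal{O}M$ and fact (II) transports it to $X$, which is exactly the assertion.

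For fact (I), I would adapt Higson and Roe's proof for open cones (cf. \cite{HigsonRoe-CBC}), replacing their $\C$-coefficient algebras $C^{\ast}(P_k(N))$ and $C^{\ast}_L(P_k(N))$ by the filtered-coefficient algebras $C^{\ast}_f(P_k(N_{\mathcal{O}M}), A)$ and $C^{\ast}_{L,f}(P_k(N_{\mathcal{O}M}), A)$, where $N_{\mathcal{O}M}$ is a locally finite net in $\mathcal{O}M$. Their argument exploits the cone/scaling structure of $\mathcal{O}M$ via an Eilenberg-swindle to show that the evaluation-at-zero map $e_{\ast}$ is an isomorphism on the relevant inductive limits of $K$-groups. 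The key point is that every $\ast$-homomorphism occurring in the swindle is of the form $ad_{g}$ or $Ad_{g}$ for a coarse, respectively uniformly continuous coarse, self-map $g$ of the underlying simplicial complex, in the sense of Lemma \ref{covering-isometry} and Lemma \ref{continuous-covering-isometry}, hence is implemented by an isometry $V_{g}\otimes I_{H_{A}}$ that acts as the identity on the coefficient algebra and preserves its filtration. The swindle is therefore insensitive to the coefficients and the isomorphism $e_{\ast}$ persists with filtered coefficients in $A$; the details appear in \cite{Zhang-opencone}.

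For fact (II), let $W$, $W'$ be proper metric spaces and $f\colon W\to W'$ a coarse homotopy equivalence with coarse inverse $f'\colon W'\to W$. It suffices to prove that two coarsely homotopic coarse maps induce the same homomorphism on $\lim_{k\rightarrow \infty}K_{\ast}(C^{\ast}_f(P_k(N_W), A))$ and on $\lim_{k\rightarrow \infty}K_{\ast}(C^{\ast}_{L,f}(P_k(N_W), A))$, compatibly with $e_{\ast}$: then $ad_{f'f,\ast}=ad_{id,\ast}=id$ and likewise for $ff'$ and on the localization side, so $f$ induces isomorphisms of the two inductive limits intertwining the evaluation maps, whence the conjecture holds for $W$ if and only if it holds for $W'$. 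The ``same map'' assertion is the Higson-Roe Mayer-Vietoris argument (cf. \cite[Chapter 12]{HigsonRoe-Book}): writing $Z=\{(w,t)\colon 0\le t\le t_w\}$ for the generalized cylinder underlying the coarse homotopy, one relates the two end-inclusions $w\mapsto(w,0)$ and $w\mapsto(w,t_w)$ by covering $Z$ by uniformly excisive pieces which, after passing to Rips complexes of nets, are strongly Lipschitz homotopy equivalent to $W$, and then invoking the Mayer-Vietoris six-term exact sequences of Lemma \ref{Lem-MV-sequence} and Lemma \ref{MV-fil-localg}, the strong Lipschitz homotopy invariance of Lemma \ref{Lip-homotopy-invaraint}, the coarse invariance of Corollary \ref{coarse-equi-isomor}, and the five lemma. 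Again every induced and connecting map is built from coarse or Lipschitz maps tensored with $I_{H_A}$, so the filtered-coefficient version is formally identical to the classical one.

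The heart of the matter, and hence the main obstacle, is thus bookkeeping rather than a new idea: one must check that the auxiliary spaces appearing in both arguments --- the Rips complexes of nets in $\mathcal{O}M$ and in the cylinder $Z$, and the members of the excisive decompositions --- are proper metric spaces to which Definition \ref{Def-Roe-filcoeff}, Definition \ref{Def-Localg-filcoeff}, Lemma \ref{filtration-Roe-algebra} and Lemma \ref{Roe-quasi-stable} apply; that the covers in play are uniformly excisive with $\varepsilon_0$-neighborhoods strongly Lipschitz homotopy equivalent to their pieces, so that Lemma \ref{MV-fil-localg} is available; and that the filtration of $A$ is nowhere altered in the construction. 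Once this routine verification is carried out, the two classical proofs transfer mechanically and the proposition follows.
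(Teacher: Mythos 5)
Your proposal follows exactly the route the paper takes: the paper derives this proposition from the same two facts --- the Eilenberg-swindle argument showing open cones satisfy the conjecture with filtered coefficients, and the Mayer--Vietoris-based coarse homotopy invariance of the conjecture with filtered coefficients --- citing \cite{Zhang-opencone}, \cite{HigsonRoe-CBC} and \cite[Chapter 12]{HigsonRoe-Book} for the details, just as you do. Your write-up is, if anything, more explicit than the paper's about why the classical arguments are insensitive to the coefficient algebra.
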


\begin{example}\label{Exam-opencones}
	In \cite{Fukaya-Oguni-2020}, Fukaya and Oguni provided a class of metric spaces which are coarsely homotopy equivalent to an open cone, including
	\begin{enumerate}
		\item all hyperbolic spaces;
		\item Busemann non-positively curved spaces, for example, CAT($0$)-spaces;
		\item systolic groups with the word length metric, especially, Artin groups of almost large type and graphical small cancellation groups;
		\item Helly groups with the word length metric, especially, weak Garside groups of finite type and FC-type Artin groups (\cite{Huang-Osajda-Helly}).
	\end{enumerate}
\end{example}

\begin{remark}
	It is unknown whether every CAT($0$)-space or Helly group admits a coarse embedding into Hilbert space.
\end{remark}

In \cite{Fukaya-Oguni-2012}, Fukaya and Oguni proved that if a group is hyperbolic relative to a finite family of infinite subgroups (cf. \cite[Definition 2.4]{Fukaya-Oguni-2012}), then this group satisfies the coarse Baum-Connes conjecture provided each subgroup belonging to the family satisfies the conjecture and admits a finite universal space for proper actions. By a similar argument, we can prove the following proposition.

\begin{proposition}\label{propo-CBC-relatively-hyperbolic}
	Let $\Gamma$ is a finitely generated group which is hyperbolic relative to a finite family of infinite subgroups $\{N_1, \cdots, N_k\}$ and $A$ be a filtered $C^{\ast}$-algebra. If every $N_i$ satisfies the coarse Baum-Connes conjecture with filtered coefficients in $A$ and admits a finite-dimensional simplicial complex which is a universal space for proper actions, then $\Gamma$ satisfies the coarse Baum-Connes conjecture with filtered coefficients in $A$.
\end{proposition}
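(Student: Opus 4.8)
The plan is to follow Fukaya and Oguni's proof of the coarse Baum--Connes conjecture for relatively hyperbolic groups from \cite{Fukaya-Oguni-2012} step for step, keeping the underlying coarse-geometric constructions unchanged and carrying the filtered coefficient algebra $A$ along. As everywhere in this section, the crucial observation is that every $\ast$-homomorphism occurring in that proof is induced by a covering isometry acting as the identity on the filtered coefficients; hence each Roe algebra $C^{\ast}(\,\cdot\,)$ and localization algebra $C^{\ast}_{L}(\,\cdot\,)$ there is to be replaced by its filtered-coefficient counterpart $C^{\ast}_{f}(\,\cdot\,,A)$, $C^{\ast}_{L,f}(\,\cdot\,,A)$, the Mayer--Vietoris six-term exact sequences invoked become those of Lemma \ref{Lem-MV-sequence} and Lemma \ref{MV-fil-localg}, and strong Lipschitz homotopy invariance is supplied by Lemma \ref{Lip-homotopy-invaraint}; all of these remain valid at this level of generality.

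Concretely, I would fix a locally finite net $N_{\Gamma}$ of bounded geometry in $\Gamma$ and, for each Rips scale $k$, reproduce Fukaya and Oguni's uniformly excisive decomposition $P_{k}(N_{\Gamma}) = X_{1}^{k}\cup X_{2}^{k}$, in which $X_{1}^{k}$ is the part dictated by the relative hyperbolicity --- coarsely homotopy equivalent to an open cone, or, in an alternative model, a hyperbolic space of bounded geometry --- while $X_{2}^{k}$ is the union of the peripheral neighbourhoods, a coarsely disjoint union (over the cosets of the $N_{i}$) of pieces each coarsely equivalent to $N_{i}$, with $X_{1}^{k}\cap X_{2}^{k}$ once again coarsely such a disjoint union. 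As in \cite{Fukaya-Oguni-2012}, one checks that the two pieces and their intersection are strongly Lipschitz homotopy equivalent to their small neighbourhoods, so that Lemma \ref{MV-fil-localg} and Lemma \ref{Lem-MV-sequence} apply compatibly with the evaluation-at-zero map $e$ and with the structure maps $P_{k}\hookrightarrow P_{k'}$, which lets one pass to the limit $k\to\infty$ at the end.

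Next I would establish the conjecture with filtered coefficients in $A$ for each of the three families of pieces. For $X_{1}^{k}$ this is Proposition \ref{Thm-CBC-opencone} (alternatively Proposition \ref{propo-CBC-coarse-embed}). For $X_{2}^{k}$ and $X_{1}^{k}\cap X_{2}^{k}$, which are coarsely disjoint unions of copies of the $N_{i}$, I would run a skeleton-by-skeleton argument in the spirit of the proof of Lemma \ref{localg-fil-localg} that terminates precisely because $\underline{E}N_{i}$ is finite-dimensional: peeling off the top-dimensional simplices by strong Lipschitz homotopy (Lemma \ref{Lip-homotopy-invaraint}, Lemma \ref{MV-fil-localg}) and the five lemma, one is reduced after finitely many steps to a $0$-dimensional stratum --- a separated coarse union of nets in the $N_{i}$ --- on which $e_{\ast}$ is an isomorphism by the consistent coarse Baum--Connes conjecture with filtered coefficients in $A$ for the $N_{i}$ (Conjecture \ref{Def-uniform-CBC}), itself obtained from the hypothesis on the $N_{i}$ together with the finiteness of $\underline{E}N_{i}$ and the quasi-stability and continuous-flow reductions of Lemma \ref{Roe-quasi-stable}, Lemma \ref{continuous-flow-K} and Corollary \ref{Cor-CBCFC-UCBC}. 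Feeding these three isomorphisms into the two Mayer--Vietoris six-term sequences and applying the five lemma shows $e_{\ast}$ is an isomorphism for $P_{k}(N_{\Gamma})$, and letting $k\to\infty$ gives the conjecture with filtered coefficients in $A$ for $\Gamma$.

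The hard part is the peripheral step: one must upgrade the conjecture with filtered coefficients in $A$ for a single $N_{i}$ to the analogous statement for the coarsely disjoint union of all of its cosets --- in effect the consistent conjecture for $N_{i}$ --- and do so uniformly in the Rips scale of $\Gamma$. This is where the finite-dimensionality of $\underline{E}N_{i}$ is indispensable, since it makes the skeleton induction terminate at a bounded dimension, and it is also where care is needed in interchanging the inductive limit over the Rips scale of $N_{i}$ with that over the Rips scale of $\Gamma$. Everything else is a faithful, if somewhat lengthy, transcription of \cite{Fukaya-Oguni-2012} with the coefficient algebra $A$ carried through all the constructions.
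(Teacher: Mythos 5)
Your proposal is correct and follows essentially the same route as the paper: the paper proves this proposition simply by invoking Fukaya--Oguni's argument from \cite{Fukaya-Oguni-2012} and observing that, as throughout this section, every morphism there acts as the identity on the filtered coefficients, so the whole proof carries over verbatim with $C^{\ast}(\cdot)$, $C^{\ast}_{L}(\cdot)$ replaced by $C^{\ast}_{f}(\cdot,A)$, $C^{\ast}_{L,f}(\cdot,A)$. Your sketch is in fact a more detailed unpacking of that strategy (the Mayer--Vietoris decomposition into hyperbolic and peripheral parts, and the skeleton induction on the coarse union of cosets terminating by finite-dimensionality of $\underline{E}N_{i}$) than the paper itself provides.
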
  

In \cite{Fukaya-Oguni-2015}, Fukaya and Oguni showed that the coarse Baum-Connes conjecture holds for products of hyperbolic groups, CAT($0$)-groups, polyclic groups and certain relatively hyperbolic groups by considering the coronae of products. But it is generally unknown whether the coarse Baum-Connes conjecture is true or not for products when their factors satisfy the conjecture.

To sum up this section, by Theorem \ref{main-corollary}, we can obtain that the products of metric spaces appeared in this section satisfy the coarse Baum-Connes conjecture with filtered coefficients. In particular, these products satisfy the coarse Baum-Connes conjecture. That not only recovers Fukaya and Oguni's result in \cite[Theorem 1]{Fukaya-Oguni-2015} by combining Examples \ref{Exam-opencones} with Proposition \ref{Thm-CBC-opencone} and Proposition \ref{propo-CBC-relatively-hyperbolic}, but also gives some new examples of products satisfying the coarse Baum-Connes conjecture, such as the products of CAT($0$)-groups or Helly groups with mapping class groups or linear groups equipped with the word length metric.

\bibliographystyle{plain}
\bibliography{CBCproducts}

\begin{thebibliography}{10}

\bibitem{Brown-Ozawa-book}
Nathanial~P. Brown and Narutaka Ozawa.
\newblock {\em {$C^*$}-algebras and finite-dimensional approximations},
  volume~88 of {\em Graduate Studies in Mathematics}.
\newblock American Mathematical Society, Providence, RI, 2008.

\bibitem{Deng-Guo-2024}
Jintao {Deng} and Liang {Guo}.
\newblock {Twisted Roe algebras and $K$-theory}.
\newblock 2024.
\newblock arXiv:2409.16556.

\bibitem{Deng-Wang-Yu-2023}
Jintao Deng, Qin Wang, and Guoliang Yu.
\newblock The coarse {B}aum-{C}onnes conjecture for certain relative expanders.
\newblock {\em Adv. Math.}, 424:Paper No. 109047, 52, 2023.

\bibitem{Fu-Wang-Yu-2020}
Benyin Fu, Xianjin Wang, and Guoliang Yu.
\newblock The equivariant coarse {N}ovikov conjecture and coarse embedding.
\newblock {\em Comm. Math. Phys.}, 380(1):245--272, 2020.

\bibitem{Fukaya-Oguni-2012}
Tomohiro Fukaya and Shin-ichi Oguni.
\newblock The coarse {B}aum-{C}onnes conjecture for relatively hyperbolic
  groups.
\newblock {\em J. Topol. Anal.}, 4(1):99--113, 2012.

\bibitem{Fukaya-Oguni-2015}
Tomohiro Fukaya and Shin-ichi Oguni.
\newblock Coronae of product spaces and the coarse {B}aum-{C}onnes conjecture.
\newblock {\em Adv. Math.}, 279:201--233, 2015.

\bibitem{Fukaya-Oguni-2020}
Tomohiro Fukaya and Shin-ichi Oguni.
\newblock A coarse {C}artan-{H}adamard theorem with application to the coarse
  {B}aum-{C}onnes conjecture.
\newblock {\em J. Topol. Anal.}, 12(3):857--895, 2020.

\bibitem{HLS-2002}
N.~Higson, V.~Lafforgue, and G.~Skandalis.
\newblock Counterexamples to the {B}aum-{C}onnes conjecture.
\newblock {\em Geom. Funct. Anal.}, 12(2):330--354, 2002.

\bibitem{HigsonRoe-CBC}
Nigel Higson and John Roe.
\newblock On the coarse {B}aum-{C}onnes conjecture.
\newblock In {\em Novikov conjectures, index theorems and rigidity, {V}ol.\ 2
  ({O}berwolfach, 1993)}, volume 227 of {\em London Math. Soc. Lecture Note
  Ser.}, pages 227--254. Cambridge Univ. Press, Cambridge, 1995.

\bibitem{HigsonRoe-Book}
Nigel Higson and John Roe.
\newblock {\em Analytic {$K$}-homology}.
\newblock Oxford Mathematical Monographs. Oxford University Press, Oxford,
  2000.
\newblock Oxford Science Publications.

\bibitem{Higson-Roe-Yu}
Nigel Higson, John Roe, and Guoliang Yu.
\newblock A coarse {M}ayer-{V}ietoris principle.
\newblock {\em Math. Proc. Cambridge Philos. Soc.}, 114(1):85--97, 1993.

\bibitem{Huang-Osajda-Helly}
Jingyin Huang and Damian Osajda.
\newblock Helly meets {G}arside and {A}rtin.
\newblock {\em Invent. Math.}, 225(2):395--426, 2021.

\bibitem{Khukhro-Li-Vigolo-Zhang}
Ana Khukhro, Kang Li, Federico Vigolo, and Jiawen Zhang.
\newblock On the structure of asymptotic expanders.
\newblock {\em Adv. Math.}, 393:Paper No. 108073, 35, 2021.

\bibitem{Nowak-Yu-Book}
Piotr~W. Nowak and Guoliang Yu.
\newblock {\em Large scale geometry}.
\newblock EMS Textbooks in Mathematics. EMS Press, Berlin, second edition,
  [2023] \copyright 2023.

\bibitem{Oyono-BC-extensions}
Herv\'e Oyono-Oyono.
\newblock Baum-{C}onnes conjecture and extensions.
\newblock {\em J. Reine Angew. Math.}, 532:133--149, 2001.

\bibitem{OyonoYu2015}
Herv\'{e} Oyono-Oyono and Guoliang Yu.
\newblock On quantitative operator {$K$}-theory.
\newblock {\em Ann. Inst. Fourier (Grenoble)}, 65(2):605--674, 2015.

\bibitem{OyonoYu2019}
Herv\'{e} Oyono-Oyono and Guoliang Yu.
\newblock Quantitative {$K$}-theory and the {K}\"{u}nneth formula for operator
  algebras.
\newblock {\em J. Funct. Anal.}, 277(7):2003--2091, 2019.

\bibitem{Roe1988}
John Roe.
\newblock An index theorem on open manifolds. {I}, {II}.
\newblock {\em J. Differential Geom.}, 27(1):87--113, 115--136, 1988.

\bibitem{Roe1993}
John Roe.
\newblock Coarse cohomology and index theory on complete {R}iemannian
  manifolds.
\newblock {\em Mem. Amer. Math. Soc.}, 104(497):x+90, 1993.

\bibitem{Willett-Yu-expander}
Rufus Willett and Guoliang Yu.
\newblock Higher index theory for certain expanders and {G}romov monster
  groups, {I}.
\newblock {\em Adv. Math.}, 229(3):1380--1416, 2012.

\bibitem{WillettYu-Book}
Rufus Willett and Guoliang Yu.
\newblock {\em Higher index theory}, volume 189 of {\em Cambridge Studies in
  Advanced Mathematics}.
\newblock Cambridge University Press, Cambridge, 2020.

\bibitem{Yu-CBC}
Guoliang Yu.
\newblock Coarse {B}aum-{C}onnes conjecture.
\newblock {\em $K$-Theory}, 9(3):199--221, 1995.

\bibitem{Yu-Localizationalg}
Guoliang Yu.
\newblock Localization algebras and the coarse {B}aum-{C}onnes conjecture.
\newblock {\em $K$-Theory}, 11(4):307--318, 1997.

\bibitem{Yu1998}
Guoliang Yu.
\newblock The {N}ovikov conjecture for groups with finite asymptotic dimension.
\newblock {\em Ann. of Math. (2)}, 147(2):325--355, 1998.

\bibitem{Yu2000}
Guoliang Yu.
\newblock The coarse {B}aum-{C}onnes conjecture for spaces which admit a
  uniform embedding into {H}ilbert space.
\newblock {\em Invent. Math.}, 139(1):201--240, 2000.

\bibitem{Zhang-quan-CBC}
Jianguo {Zhang}.
\newblock {On the quantitative coarse Baum-Connes conjecture with
  coefficients}.
\newblock 2024. arXiv:2410.11929.

\bibitem{Zhang-opencone}
Jianguo Zhang.
\newblock Open cones and {$K$}-theory for {$\ell^p$} {R}oe algebras.
\newblock {\em Chinese Ann. Math. Ser. B}, 45(6):943--956, 2024.

\bibitem{Zhang-Zhou-lpCBC}
Jianguo Zhang and Dapeng Zhou.
\newblock {$L^p$} coarse {B}aum-{C}onnes conjecture and {$K$}-theory for
  {$L^p$} {R}oe algebras.
\newblock {\em J. Noncommut. Geom.}, 15(4):1285--1322, 2021.

\end{thebibliography}

\end{document}